\newcommand{\cmark}{\ding{51}}%
\newcommand{\xmark}{\ding{55}}%
\newcommand\bbR{\mathbb{R}}
\newcommand\bF{\bm{F}}
\newcommand\bU{\bm{U}}
\newcommand\bR{\bm{R}}
\newcommand\bJ{\bm{J}}
\newcommand\bD{\bm{D}}
\newcommand\dd{\mathrm{d}}
\newcommand\pd[2]{\frac{\partial {#1}}{\partial {#2}}}
\newcommand\abs[1]{\lvert #1 \rvert}
\DeclareMathOperator{\diag}{diag}
\newcommand\xr{i+\frac12}
\newcommand\xl{i-\frac12}
\theoremstyle{plain}
\newtheorem{lemma}{Lemma}[section]
\theoremstyle{definition}
\newtheorem{definition}{Definition}[section]
\newtheorem{theorem}{Theorem}[section]
\newtheorem{example}{Example}[section]
\newtheorem{proposition}{Proposition}[section]
\theoremstyle{remark}
\newtheorem{remark}{Remark}[section]
\crefname{equation}{}{}
\crefname{figure}{Figure}{Figures}
\crefname{table}{Table}{Tables}
\crefname{example}{Example}{Examples}
\crefname{section}{Section}{Sections}
\renewcommand{\title}{Active flux methods for hyperbolic conservation laws -- flux vector splitting and bound-preservation: One-dimensional case}
\newcommand{\authorOne}{Junming Duan\footnote{Corresponding author. Institute of Mathematics, University of W\"urzburg, Emil-Fischer-Stra\ss e 40, 97074 W\"urzburg, Germany, junming.duan@uni-wuerzburg.de}}
\newcommand{\authorTwo}{Wasilij Barsukow\footnote{Institut de Math\'ematiques de Bordeaux (IMB), CNRS UMR 5251, University of Bordeaux, 33405 Talence, France, wasilij.barsukow@math.u-bordeaux.fr}}
\newcommand{\authorThree}{Christian Klingenberg\footnote{Institute of Mathematics, University of W\"urzburg, Emil-Fischer-Stra\ss e 40, 97074 W\"urzburg, Germany, christian.klingenberg@uni-wuerzburg.de}}
\begin{document}

\begin{center} \Large
\title

\vspace{1cm}

\date{}
\normalsize

\authorOne, \authorTwo, \authorThree
\end{center}

\begin{abstract}

{\color{blue}A more elaborate version, based on this preprint and arXiv:2407.13380 can be found as arXiv:2411.00065.}

The active flux (AF) method is a compact high-order finite volume method that evolves cell averages and point values at cell interfaces simultaneously.
Within the method of lines framework, the point value can be updated based on Jacobian splitting (JS), incorporating the upwind idea.
However, such JS-based AF methods encounter transonic issues for nonlinear problems due to inaccurate upwind direction estimation.
This paper proposes to use flux vector splitting for the point value update,
offering a natural and uniform remedy to the transonic issue.
To improve robustness, this paper also develops bound-preserving (BP) AF methods for one-dimensional hyperbolic conservation laws.
Two cases are considered:
preservation of the maximum principle for the scalar case, and preservation of positive density and pressure for the compressible Euler equations.
The update of the cell average in high-order AF methods is rewritten as a convex combination of using the original high-order fluxes and robust low-order (local Lax-Friedrichs or Rusanov) fluxes, and the desired bounds are enforced by choosing the right amount of low-order fluxes.
A similar blending strategy is used for the point value update.
Several challenging benchmark tests are conducted to verify the accuracy, BP properties, and shock-capturing ability of the methods.

Keywords: hyperbolic conservation laws, finite volume method, active flux, flux vector splitting, bound-preserving, convex limiting, scaling limiter

Mathematics Subject Classification (2020): 65M08, 65M12, 65M20, 35L65

\end{abstract}

\section{Introduction}\label{sec:introduction}
This paper is concerned with solving systems of hyperbolic conservation laws
\begin{equation}\label{eq:1d_hcl}
		\pd{\bU(x, t)}{t} + \pd{\bF(\bU)}{x} = 0,\quad
		\bU(x,0) = \bU_0(x),\quad (x, t)\in\bbR\times \bbR^+,
\end{equation}
where $\bU\in \bbR^{m}$ is the vector of $m$ conservative variables,
$\bF\in \bbR^{m}$ is the physical flux,
and $\bU_0(x)$ is assumed to be initial data of bounded variation.
In this paper, we would like to consider two cases.
The first is a scalar conservation law ($m=1$)
\begin{equation}\label{eq:1d_scalar}
	\pd{u}{t} + \pd{f(u)}{x} = 0,\quad u(x,0) = u_0(x).
\end{equation}
The second case is that of compressible Euler equations of gas dynamics
with $\bU=(\rho, \rho v, E)^\top$ and $\bF=(\rho v, \rho v^2 + p, (E+p)v)^\top$, i.e.,
\begin{equation}\label{eq:1d_euler}
	\begin{aligned}
		\dfrac{\partial}{\partial t}\begin{pmatrix}
			\rho\\ \rho v\\ E \\
		\end{pmatrix}
		+ \dfrac{\partial}{\partial x}\begin{pmatrix}
			\rho v\\ \rho v^2 + p\\ (E + p)v \\
		\end{pmatrix}
		 = \bm{0},
	\end{aligned}\quad
	(\rho, v, p)(x, 0) = (\rho_0, v_0, p_0).
\end{equation}
Here $\rho$ denotes the density, $v$ the velocity,
$p$ the pressure, and $E=\frac12\rho v^2 + \rho e$ the total energy with $e$ the specific internal energy.
The system \cref{eq:1d_euler} should be closed by an equation of state (EOS).
This paper considers the perfect gas EOS, $p = (\gamma-1)\rho e$,
with the adiabatic index $\gamma > 1$.
Note that this paper uses bold symbols to refer to vectors and matrices, such that they are easier to distinguish from scalars.

The active flux (AF) method is a new finite volume method \cite{Eymann_2011_Active_InCollection, Eymann_2011_Active_InProceedings,Eymann_2013_Multidimensional_InCollection,Roe_2017_Is_JoSC},
that Roe took inspiration by \cite{VanLeer_1977_Towards_JoCP}.
Apart from cell averages, it incorporates additional degrees of freedom as point values
located at the cell interfaces, evolved independently from the cell average.
The original AF method gives a global continuous representation of the numerical solution using a piecewise quadratic reconstruction,
leading naturally to a third-order accurate method with a compact stencil.
The introduction of point values at the cell interfaces avoids the usage of Riemann solvers as in usual Godunov methods, because the numerical solution is continuous across the cell interface and the numerical flux for the cell average update is available directly.

The independence of the point value update adds flexibility to the AF methods.
Based on the evolution of the point value, there are generally two kinds of AF methods.
The original one uses exact or approximate evolution operators and Simpson's rule for flux quadrature in time, i.e. it does not require time integration methods like Runge-Kutta methods.
Exact evolution operators have been studied for linear equations in \cite{Barsukow_2019_Active_JoSC,Fan_2015_Investigations_InCollection,Eymann_2013_Multidimensional_InCollection, VanLeer_1977_Towards_JoCP}.
Approximate evolution operators have been explored for Burgers' equation \cite{Eymann_2011_Active_InCollection,Eymann_2011_Active_InProceedings,Roe_2017_Is_JoSC,Barsukow_2021_active_JoSC},
the compressible Euler equations in one spatial dimension \cite{Eymann_2011_Active_InCollection,Helzel_2019_New_JoSC,Barsukow_2021_active_JoSC},
and hyperbolic balance laws \cite{Barsukow_2021_Active_SJoSC, Barsukow_2023_Well_CoAMaC}, etc.
One of the advantages of the AF method over standard finite volume methods is its structure-preserving property.
For instance, it preserves the vorticity and stationary states for multi-dimensional acoustic equations \cite{Barsukow_2019_Active_JoSC}, and it is naturally well-balanced for acoustics with gravity \cite{Barsukow_2021_Active_SJoSC}.

Since it may not be convenient to derive exact or approximate evolution operators for nonlinear systems, especially in multi-dimensions,
another kind of generalized AF method was presented in \cite{Abgrall_2023_Combination_CoAMaC, Abgrall_2023_Extensions_EMMaNA, Abgrall_2023_active}. A method of lines was used,
where the cell average and point value updates are written in semi-discrete form and advanced in time with time integration methods. 
In the point values update, the Jacobian matrix is split based on the sign of the eigenvalues (Jacobian splitting (JS)), and upwind-biased stencils are used to compute the approximation of derivatives.
There are some deficiencies of the JS when used for the AF methods, e.g., the transonic issue \cite{Helzel_2019_New_JoSC} for nonlinear problems,
leading to spikes in the cell average.
Some remedies are suggested in the literature,
e.g., using discontinuous reconstruction \cite{Helzel_2019_New_JoSC} or
evaluating the upwind direction using more information from the neighbors \cite{Barsukow_2021_active_JoSC}. 

Solutions to hyperbolic systems \cref{eq:1d_hcl} often stay in an \emph{admissible state set} $\mathcal{G}$,
also called the invariant domain.
For instance, the solutions to initial value problems of scalar conservation laws \cref{eq:1d_scalar} satisfy a strict maximum principle (MP) \cite{Dafermos_2000_Hyperbolic_book}, i.e.,
\begin{equation}\label{eq:1d_scalar_g}
	\mathcal{G} = \left\{ u ~|~ m_0 \leqslant u \leqslant M_0 \right\},
	\quad m_0 = \min_{x} u_0(x), ~M_0 = \max_{x} u_0(x).
\end{equation}
Physically, both the density and pressure in the solutions to the compressible Euler equations \cref{eq:1d_euler} should stay positive, i.e.,
\begin{equation}\label{eq:1d_euler_g}
	\mathcal{G} = \left\{\bU = \left(\rho, \rho v, E\right) ~\Big|~ \rho > 0,~ p = (\gamma-1)\left(E - \frac{(\rho v)^2}{2\rho}\right) > 0 \right\}.
\end{equation}
Throughout this paper, it is assumed that $\mathcal{G}$ is a \emph{convex} set,
which is obvious for the scalar case \cref{eq:1d_scalar_g}
and can be verified for the Euler equations \cref{eq:1d_euler_g}, see e.g. \cite{Zhang_2011_Positivity_JoCP}.
It is desirable to conceive so-called bound-preserving (BP) methods,
i.e., those guaranteeing that the numerical solutions at a later time will stay in $\mathcal{G}$, if the initial numerical solutions belong to $\mathcal{G}$.
The BP property of numerical methods is very important for both theoretical analysis and numerical stability.
Many BP methods have been developed in the past few decades,
e.g., a series of works by Shu and collaborators \cite{Zhang_2011_Maximum_PotRSAMPaES, Hu_2013_Positivity_JoCP, Xu_2014_Parametrized_MoC},
a recent general framework on BP methods \cite{Wu_2023_Geometric_SR},
and the convex limiting approach \cite{Guermond_2018_Second_SJoSC, Hajduk_2021_Monolithic_C&MwA, Kuzmin_2020_Monolithic_CMiAMaE},
which can be traced back to the flux-corrected transport (FCT) schemes for scalar conservation laws \cite{Cotter_2016_Embedded_JoCP, Guermond_2017_Invariant_SJoNA, Lohmann_2017_Flux_JoCP, Kuzmin_2012_Flux_book}.
The previous studies on the AF methods pay limited attention to high-speed flows, or problems containing strong discontinuities,
with some efforts on the limiting for the point value update, see e.g. \cite{Barsukow_2021_active_JoSC, Chudzik_2021_Cartesian_AMaC}.
However, those limitings are not enough to guarantee the BP property, as shown in our numerical tests.
In a very recent paper, the MOOD \cite{Clain_2011_high_JoCP} based stabilization was adopted to achieve the BP property \cite{Abgrall_2023_Activea} in an a posteriori fashion.

This paper presents a new way for the point value update to cure the transonic issue and develops suitable BP limiting strategies for the AF methods.
The main contributions and findings in this work can be summarized as follows.
\begin{enumerate}[label=\roman{enumi})., wide=0pt, nosep]
	\item We propose to employ the flux vector splitting (FVS) methods for the point value update to cure the transonic issue,
	since it borrows information from the neighbors naturally and uniformly.
	The FVS was originally used to identify the upwind directions,
	which is simpler and somewhat more efficient than Godunov-type methods for solving hyperbolic systems \cite{Toro_2009_Riemann}.
	In our numerical tests, the FVS is also shown to give better results than the JS, especially the local Lax-Friedrichs (LLF) or Rusanov FVS, in terms of the CFL number and shock-capturing ability.
	The FVS can also cure some defects in two dimensions observed in the JS,
	which will be shown in our future companion paper.
	\item We design BP limitings for both the update of the cell average and the point value by blending the high-order AF methods with the first-order LLF method in a convex combination.
	The convex limiting \cite{Guermond_2018_Second_SJoSC, Hajduk_2021_Monolithic_C&MwA, Kuzmin_2020_Monolithic_CMiAMaE} and the scaling limiter \cite{Liu_1996_Nonoscillatory_SJoNA} are applied to the cell average and point value updates, respectively.
	The main idea is to retain as much as possible of the high-order method while guaranteeing the numerical solutions to be BP,
	and the blending coefficients are computed by enforcing the bounds.
	We show that using a suitable time step size and BP limitings, the numerical solutions of the BP AF methods satisfy the MP for scalar conservation laws, and give positive density and pressure for the compressible Euler equations.
	\item Several challenging test cases such as the LeBlanc and double rarefaction Riemann problems, the Sedov point blast wave, and blast wave interaction problems are conducted to demonstrate the BP properties and the shock-capturing ability,
	which are rare in the literature for the AF methods.
\end{enumerate}

The remainder of this paper is structured as follows.
\Cref{sec:1d_af_schemes} introduces the AF methods based on the JS or FVS for the point value update,
and the power law reconstruction for limiting the derivatives in the point value update.
To design BP methods,  \Cref{sec:1d_limiting_average} describes our convex limiting approach for the cell average,
while \Cref{sec:1d_limiting_point} deals with the limiting for the point value.
Some numerical tests are conducted in  \Cref{sec:results} to experimentally demonstrate the accuracy, BP properties, and shock-capturing ability of the methods.
\Cref{sec:conclusion} concludes the paper with final remarks and future directions.

\section{1D active flux methods for hyperbolic conservation laws}\label{sec:1d_af_schemes}
This section presents the 1D semi-discrete AF methods for the hyperbolic conservation laws \cref{eq:1d_hcl},
based on the JS \cite{Abgrall_2023_Extensions_EMMaNA} or FVS for the point value update. The fully-discrete methods are obtained using Runge-Kutta methods.

Assume that a 1D computational domain is divided into $N$ cells
$I_i = [x_{\xl}, x_{\xr}]$ with cell centers $x_i = (x_{\xl} + x_{\xr})/2$ and cell sizes $\Delta x_i = x_{\xr}-x_{\xl}$, $i=1,\cdots,N$.
The degrees of freedom of the AF methods are the approximations to cell averages of the conservative variable as well as point values at the cell interfaces,
allowing some freedom in the choice of the point values,
e.g. conservative variables, primitive variables, entropy variables, etc.
This paper only considers using the conservative variables, and the degrees of freedom are denoted by
\begin{equation}
	\overline{\bU}_i(t) = \dfrac{1}{\Delta x_i}\int_{I_i} \bU(x,t) ~\dd x,\quad
	\bU_{\xr}(t) = \bU(x_{\xr}, t).
\end{equation}
The cell average is updated by integrating \cref{eq:1d_hcl} over $I_i$ in the following semi-discrete finite volume manner
\begin{equation}\label{eq:semi_av_1d}
	\dfrac{\dd \overline{\bU}_i}{\dd t} = -\dfrac{1}{\Delta x_i}\left[\bF(\bU_{\xr}) - \bF(\bU_{\xl})\right].
\end{equation}
Thus, the accuracy of \cref{eq:semi_av_1d} is determined by the approximation accuracy of the point values.
It was so far (e.g. in \cite{Abgrall_2023_Extensions_EMMaNA}) considered sufficient to update the point values with any finite-difference-like formula
\begin{equation}\label{eq:semi_pnt_1d}
	\dfrac{\dd \bU_{\xr}}{\dd t} = - \bm{\mathcal{R}}\left(\bU_{i+\frac12-l_1}(t), \overline{\bU}_{i+1-l_1}(t), \cdots, \overline{\bU}_{i+l_2}(t), \bU_{i+\frac12+l_2}(t)\right), ~l_1,l_2\geqslant 0,
\end{equation}
with $\bm{\mathcal{R}}$ a consistent approximation of $\partial\bF/\partial x$ at $x_{\xr}$,
as long as it gave rise to a stable method.
This paper explores further conditions on $\bm{\mathcal{R}}$ for nonlinear problems.

\subsection{Point value update using Jacobian splitting}
For smooth solutions, we have an equivalent formulation in the form
\begin{equation}\label{eq:1d_hcl_js}
	\pd{\bU}{t} + \bJ(\bU)\pd{\bU}{x} = 0,\quad \bJ(\bU) = \pd{\bF(\bU)}{\bU}.
\end{equation}
Inspired by the upwind scheme, \cref{eq:1d_hcl_js} can be discretized by the JS \cite{Abgrall_2023_Combination_CoAMaC, Abgrall_2023_Extensions_EMMaNA} as follows
\begin{equation}\label{eq:1d_semi_js}
	\dfrac{\dd \bU_{\xr}}{\dd t} = - \left[\bJ^+(\bU_{\xr})\bD^+_{\xr}(\bU) + \bJ^-(\bU_{\xr})\bD^-_{\xr}(\bU) \right],
\end{equation}
where the splitting of the Jacobian matrix $\bJ = \bJ^{+} + \bJ^{-}$ is defined as
\begin{align*}
	&\bJ^+ = \bR\bm{\Lambda}^+\bR^{-1},\quad \bJ^- = \bR\bm{\Lambda}^-\bR^{-1},\\
	&\bm{\Lambda}^+ = \diag\{\max(\lambda_1, 0), \dots, \max(\lambda_m, 0)\}, \\
	&\bm{\Lambda}^- = \diag\{\min(\lambda_1, 0), \dots, \min(\lambda_m, 0)\},
\end{align*}
based on the eigendecomposition
${\partial \bF}/{\partial\bU} = \bR\bm{\Lambda}\bR^{-1},~\bm{\Lambda} = \diag\{\lambda_1, \dots, \lambda_m\}$,
where $\lambda_1,\cdots,\lambda_m$ are the eigenvalues, with the columns of $\bR$ the corresponding eigenvectors.

To derive the approximation of the derivatives in \cref{eq:1d_semi_js}, one can first obtain a high-order reconstruction for $\bU$ in the upwind cell, and then differentiate the reconstructed polynomial.
As an example, a parabolic reconstruction in cell $i$ is
\begin{align}
	\bU_{\texttt{para}, 1}(x) =& -3(2\overline{\bU}_i - \bU_{\xl} - \bU_{\xr}) \frac{x^2}{\Delta x_i^2}
	+ (\bU_{\xr} - \bU_{\xl}) \frac{x}{\Delta x_i} \nonumber\\
 &+ \frac14(6\overline{\bU}_i - \bU_{\xl} - \bU_{\xr})\label{eq:parabolic_reconstruction}
\end{align}
satisfying
$\bU_{\texttt{para}, 1}(\pm\Delta x_i/2) = \bU_{i\pm\frac12},~
	\frac1{\Delta x_i}\int_{-\Delta x_i/2}^{\Delta x_i/2} \bU_{\texttt{para}, 1}(x) ~\dd x = \overline{\bU}_{i}$.
Then the derivatives are
\begin{subequations}\label{eq:parabolic_av}
	\begin{align}
		\bD^{+}_{\xr}(\bU) =\bU_{\texttt{para}, 1}'(\Delta x_i/2) &= \dfrac{1}{\Delta x_i}\left(2 \bU_{\xl}- 6 \overline{\bU}_i  + 4 \bU_{\xr} \right), \\
		\bD^{-}_{\xr}(\bU) &= \dfrac{1}{\Delta x_{i+1}}\left(- 4 \bU_{\xr} + 6 \overline{\bU}_{i+1} - 2 \bU_{i+\frac32} \right).
	\end{align}
\end{subequations}
They are third-order accurate.
Higher-order extensions can be obtained by higher-order finite difference formulae using a larger spatial stencil, see \cite{Abgrall_2023_Extensions_EMMaNA} for examples.

\subsection{Point value update using flux vector splitting}\label{sec:1d_fvs}
One of the deficiencies of using the JS is the transonic issue that appears for nonlinear problems, as observed in \cite{Helzel_2019_New_JoSC, Barsukow_2021_active_JoSC} and described in more detail next.
Consider \cref{ex:1d_burgers}, where we solve Burgers' equation with a square wave as the initial data.
\Cref{fig:1d_burgers_shock_js} shows the cell averages and point values based on the JS with $200$ cells,
as well as the reference solution.
The numerical solution based on the JS without limiting gives a spike at the initial discontinuity $x=0.2$, which grows linearly in time.
The reason for this behaviour is the inaccurate estimation of the upwind direction at the cell interface.
In this example, there are two successive point values with different initial data near the initial discontinuity, denoted by $u_{\xl}=2$, $u_{\xr}=-1$, respectively.
At the cell interface $x_{\xl}$ or $x_{\xr}$, the upwind discretization in \cref{eq:parabolic_av} only uses the data from the left or right,
leading to zero derivatives, thus the point values $u_{\xl}$ and $u_{\xr}$ stay unchanged.
However, according to the update of the cell average \cref{eq:semi_av_1d},
$\bar{u}_i$ increases gradually (which is the observed spike). This deficiency cannot be eliminated by limitings, as one observes from \cref{fig:1d_burgers_shock_js}.
Some remedies have been proposed, such as using discontinuous reconstruction \cite{Helzel_2019_New_JoSC} and an
``entropy fix'' that evaluates the upwind direction not only at the corresponding cell interface but also with values from its neighbors \cite{Barsukow_2021_active_JoSC}.

In this paper, we propose to use the FVS for the point value update,
which borrows the information from the neighbors naturally, still based on the continuous reconstruction, 
and can eliminate the generation of the spike effectively, as shown in \cref{fig:1d_burgers_shock_fvs}.
The FVS for the point value update reads
\begin{equation}\label{eq:1d_semi_fvs}
	\dfrac{\dd \bU_{\xr}}{\dd t} 
	= - \left[\widetilde{\bD}^+\bF^{+}(\bU)+ \widetilde{\bD}^-\bF^{-}(\bU) \right]_{\xr},
\end{equation}
where the flux $\bF$ is split into the positive and negative parts $\bF= \bF^{+} + \bF^{-}$
satisfying
\begin{equation}\label{eq:FVS_condition}
	\lambda\left(\pd{\bF^{+}}{\bU}\right) \geqslant 0, \quad \lambda\left(\pd{\bF^{-}}{\bU}\right) \leqslant 0,
\end{equation}
i.e., all the eigenvalues of $\pd{\bF^{+}}{\bU}$ and $\pd{\bF^{-}}{\bU}$ are non-negative and non-positive, respectively.
Different FVS can be adopted as long as they satisfy the constraint \cref{eq:FVS_condition}, to be discussed later.
Finite difference formulae to approximate the flux derivatives are obtained similarly to the computation of the derivatives in the JS.
A parabolic reconstruction of the flux can be obtained based on the three flux values as follows
\begin{equation*}
	\bF_{\texttt{para}, 2}(x) = 
 2(\bF_{\xl} - 2\bF_i + \bF_{\xr}) \frac{x^2}{\Delta x_i^2}
	+ (\bF_{\xr} - \bF_{\xl}) \frac{x}{\Delta x_i} + \bF_{i},
\end{equation*}
satisfying
$\bF_{\texttt{para}, 2}(\pm\Delta x_i/2) = \bF_{i\pm\frac12},~
	\bF_{\texttt{para}, 2}(0) = \bF_{i}$,
with $\bF_{i\pm\frac12} = \bF(\bU_{i\pm\frac12})$, and the cell-centered point value $\bF_i = \bF(\bU_i)$ is obtained by evaluating the reconstruction of $\bU$, i.e. according to Simpson's rule
$\bU_i = (- \bU_{\xl} + 6\overline{\bU}_i - \bU_{\xr})/4$.
Then the derivatives are
\begin{subequations}\label{eq:parabolic_pnt}
	\begin{align}
		\left(\widetilde{\bD}^{+}\bF^+\right)_{\xr} = \bF_{\texttt{para}, 2}'(\Delta x_i/2) &= \dfrac{1}{\Delta x_i}\left(\bF_{\xl} - 4 \bF_i  + 3 \bF_{\xr} \right), \\
		\left(\widetilde{\bD}^{-}\bF^-\right)_{\xr} &= \dfrac{1}{\Delta x_{i+1}}\left(- 3 \bF_{\xr} + 4 \bF_{i+1} - \bF_{i+\frac32} \right).
	\end{align}
\end{subequations}
These finite differences are third-order accurate.
While the reconstructions of both $\bU$ and $\bF$ are parabolic, the coefficients in the formula \cref{eq:parabolic_pnt} differ from \cref{eq:parabolic_av} because \cref{eq:parabolic_pnt} uses the cell-centered value rather than the cell average.
Our numerical tests in \Cref{sec:results} show that the AF methods based on the FVS generally give better results than the JS.

\subsubsection{Local Lax-Friedrichs flux vector splitting}
The first FVS we consider is the LLF FVS, defined as
\begin{equation*}
	\bF^\pm = \frac12(\bF(\bU) \pm \alpha \bU),
\end{equation*}
where the choice of $\alpha$ should fulfill \cref{eq:FVS_condition} across the spatial stencil.
In our implementation, it is determined by
\begin{equation}\label{eq:1d_Rusanov_alpha}
	\alpha_{\xr} = \max_{r,\ell} \left\{\abs{\lambda_{\ell}(\bU_r)}\right\},
	~r \in \left\{ i-\frac12, i, i+\frac12, i+1, i+\frac32\right\},~\ell=1,\cdots,m.
\end{equation}
One can also choose $\alpha$ to be the maximal absolute value of the eigenvalues in the whole domain, corresponding to the (global) LF splitting.
Note, however, that a larger $\alpha$ generally leads to a smaller time step size and more dissipation.

\subsubsection{Upwind flux vector splitting}
One can also split the Jacobian matrix based on each characteristic field,
\begin{equation}\label{eq:upwind_fvs}
	\bF^\pm = \frac12(\bF(\bU)\pm \abs{\bJ} \bU),\quad
	\abs{\bJ} = \bR(\bm{\Lambda}^{+} - \bm{\Lambda}^{-})\bR^{-1}.
\end{equation}
For linear systems, one has $\bF = \bJ \bU$, so \cref{eq:upwind_fvs} reduces to the JS. To be specific,
\begin{equation*}
	\bF^\pm = \frac12(\bJ\pm \abs{\bJ})\bU
	= \bR\bm{\Lambda}^{\pm}\bR^{-1} \bU= \bJ^{\pm}\bU,
\end{equation*}
with $\bm{J}^{\pm}$ a constant matrix so that
$\widetilde{\bD}^{\pm}\bF^\pm(\bU) = \bJ^{\pm}\widetilde{\bD}^{\pm}\bU$,
which is the same as $\bJ^{\pm}\bD^{\pm}\bU$ if $\bD^{+}$ and $\widetilde\bD^{+}$ are derived from the same reconstructed polynomial.
In other words, the AF methods using this FVS enjoy the same properties as the original JS-based AF methods for linear systems.

Such an FVS is also known as the Steger-Warming (SW) FVS \cite{Steger_1981_Flux_JoCP} for the Euler equations \cref{eq:1d_euler},
since the ``homogeneity property'' holds \cite{Toro_2009_Riemann}, i.e., $\bF = \bJ \bU$.
One can write down the SW FVS explicitly
\begin{align*}
	\bF^{\pm} &= \begin{bmatrix}
		\frac{\rho}{2\gamma}\alpha^\pm \\
		\frac{\rho}{2\gamma}\left(\alpha^\pm v + a (\lambda_2^\pm - \lambda_3^\pm)\right) \\
		\frac{\rho}{2\gamma}\left(\frac12\alpha^\pm v^2 + a v (\lambda_2^\pm - \lambda_3^\pm) + \frac{a^2}{\gamma-1}(\lambda_2^\pm + \lambda_3^\pm)\right) \\
	\end{bmatrix},
\end{align*}
where
$\lambda_1 = v, ~\lambda_2 = v + a, ~\lambda_3 = v - a,
~
	\alpha^\pm = 2(\gamma-1)\lambda_1^\pm + \lambda_2^\pm + \lambda_3^\pm$,
and $a = \sqrt{\gamma p / \rho}$ is the sound speed.

It should be noted that $\bF^\pm$ in this FVS may not be differentiable with respect to $\bU$ for nonlinear systems (e.g. Euler), as the splitting is based on the absolute value.
In \cite{Leer_1982_Flux_InProceedings}, the mass flux of $\bF^\pm$ is shown to be not differentiable, which explains the accuracy degradation in \cref{ex:1d_accuracy} and the kinks appeared in the density profile.

\subsubsection{Van Leer-H\"anel flux vector splitting for the Euler equations}
Another popular FVS for the Euler equations was proposed by van Leer \cite{Leer_1982_Flux_InProceedings},
and improved by \cite{Haenel_1987_accuracy_InCollection}.
The flux can be split based on the Mach number $M=v/a$ as
\begin{equation*}
	\bF = \begin{bmatrix}
		\rho a M \\
		\rho a^2(M^2 + \frac{1}{\gamma}) \\
		\rho a^3M(\frac{1}{2}M^2 + \frac{1}{\gamma-1}) \\
	\end{bmatrix} = \bF^+ + \bF^-,\quad
	\bF^{\pm} = \begin{bmatrix}
			\pm\frac{1}{4}\rho a (M\pm1)^2 \\
			\pm\frac{1}{4}\rho a (M\pm1)^2 v + p^{\pm} \\
			\pm\frac{1}{4}\rho a (M\pm1)^2 H \\
		\end{bmatrix},
\end{equation*}
with the enthalpy $H=(E+p)/\rho$,
and the pressure splitting
$p^{\pm} = \frac{1}{2}(1\pm\gamma M)p$.
This FVS gives a quadratic differentiable splitting with respect to the Mach number.

\begin{remark}
	Different FVS may lead to different stability conditions. However, it is difficult to study them theoretically. We provide CFL numbers for each test case using different FVS.
	Based on the tests, the LLF FVS is shown to be the best among all the three FVS in terms of CFL number and non-oscillatory property.
\end{remark}

\subsection{Time discretization}
The fully-discrete scheme is obtained by using the SSP-RK3 method \cite{Gottlieb_2001_Strong_SRa}
\begin{equation}\label{eq:ssp_rk3}
	\begin{aligned}
		\bU^{*} &= \bU^{n}+\Delta t^n \bm{L}\left(\bU^{n}\right),\\
		\bU^{**} &= \frac{3}{4}\bU^{n}+\frac{1}{4}\left(\bU^{*}+\Delta t^n \bm{L}\left(\bU^{*}\right)\right),\\
		\bU^{n+1} &= \frac{1}{3}\bU^{n}+\frac{2}{3}\left(\bU^{**}+\Delta t^n \bm{L}\left(\bU^{**}\right)\right),
	\end{aligned}
\end{equation}
where $\bm{L}$ is the right-hand side of the semi-discrete schemes \cref{eq:semi_av_1d} or  \cref{eq:semi_pnt_1d}.
The time step size is determined by the usual CFL condition
\begin{equation}\label{eq:cfl_condition}
	\Delta t^n = \dfrac{C_{\texttt{CFL}}}{\max\limits_{i,\ell}\{\lambda_\ell(\overline{\bU}_i)/\Delta x_i\}}.
\end{equation}

\section{Convex limiting for the cell average}\label{sec:1d_limiting_average}
Although the power law reconstruction \cite{Barsukow_2021_active_JoSC} has been shown to effectively reduce spurious oscillations,
the numerical solutions may still violate certain bounds,
e.g., the appearance of negative density or pressure,
leading to unphysical solutions or even causing the simulations to blow up.
Since the degrees of freedom in the AF methods include both cell averages and point values,
it is necessary to design suitable BP limitings for both of them to achieve the BP property.
The limiting for the cell average has not been addressed much in the literature, except for a very recent work \cite{Abgrall_2023_Activea}.

\begin{definition}
An AF method is called \emph{bound-preserving} (BP) if starting from cell averages and point values in the admissible state set $\mathcal G$, the cell averages and point values remain in $\mathcal G$ at the next time step.
\end{definition}

This section presents a convex limiting approach to achieve the BP property of the cell average update.
The basic idea of the convex limiting approaches \cite{Guermond_2018_Second_SJoSC, Hajduk_2021_Monolithic_C&MwA, Kuzmin_2020_Monolithic_CMiAMaE} is to enforce the preservation of local and global bounds by constraining individual numerical fluxes.
The BP or invariant domain-preserving (IDP) properties of flux-limited approximations are shown using representations in terms of intermediate states that stay in convex admissible state sets \cite{Guermond_2018_Second_SJoSC, Guermond_2019_Invariant_CMiAMaE}.
The low-order scheme is chosen as the first-order LLF scheme
\begin{align*}
	&\overline{\bU}^{\texttt{L}}_i = \overline{\bU}^{n}_i - \mu_{i}\left(\widehat{\bF}^{\texttt{L}}_{\xr} - \widehat{\bF}^{\texttt{L}}_{\xl}\right),\quad
	\mu_{i} = \Delta t^n / \Delta x_i, \\
	&\widehat{\bF}^{\texttt{L}}_{\xr} = \frac12\left(\bF(\overline{\bU}^{n}_i) + \bF(\overline{\bU}^{n}_{i+1})\right) - \frac{\alpha_{\xr}}{2}\left(\overline{\bU}^{n}_{i+1} - \overline{\bU}^{n}_i\right),
\end{align*}
where $\alpha_{\xr}$ is an \emph{upper bound} for the maximum wave speed of the Riemann problem with the initial data $\bU_{i}, \bU_{i+1}$,
whose estimation for scalar conservation laws and the Euler equations can be found in \cite{Guermond_2016_Invariant_SJoNA} and \cite{Guermond_2016_Fast_JoCP}, respectively.
Note that here $\alpha_{\xr}$ is not the same as the one in the LLF FVS \cref{eq:1d_Rusanov_alpha}.
Following \cite{Guermond_2016_Invariant_SJoNA}, the first-order LLF scheme can be rewritten as
\begin{equation}\label{eq:1d_lo_decomp}
	\overline{\bU}^{\texttt{L}}_i = \left[1-\mu_i\left(\alpha_{\xl}+\alpha_{\xr}\right)\right] \overline{\bU}^{n}_i
	+ \mu_i\alpha_{\xl}\widetilde{\bU}_{\xl} + \mu_i\alpha_{\xr}\widetilde{\bU}_{\xr},
\end{equation}
with the intermediate states defined as
\begin{equation}\label{eq:1d_llf_inter_states}
	\begin{aligned}
		&\widetilde{\bU}_{\xl} := \frac12\left(\overline{\bU}^{n}_{i-1} + \overline{\bU}^{n}_{i}\right)
		+ \frac{1}{2\alpha_{\xl}}\left[\bF(\overline{\bU}^{n}_{i-1}) - \bF(\overline{\bU}^{n}_{i})\right], \\
		&\widetilde{\bU}_{\xr} := \frac12\left(\overline{\bU}^{n}_{i} + \overline{\bU}^{n}_{i+1}\right)
		+ \frac{1}{2\alpha_{\xr}}\left[\bF(\overline{\bU}^{n}_{i}) - \bF(\overline{\bU}^{n}_{i+1})\right].
	\end{aligned}
\end{equation}

\begin{remark}\rm
	As $\alpha_{\xr}$ is chosen to be larger than the leftmost and rightmost wave speed,
	the intermediate state defined in \cref{eq:1d_llf_inter_states} is indeed an average of the exact Riemann solution \cite{Guermond_2016_Invariant_SJoNA},
	thus it belongs to $\mathcal{G}$.
	For systems, it is also the intermediate state of the HLL solver \cite{Harten_1983_Upstream_SR}.
	Moreover, the intermediate state \cref{eq:1d_llf_inter_states} preserves all \emph{convex invariants} (e.g., density and pressure positivity, and minimum entropy principle for the Euler equations) of initial value problems for hyperbolic systems \cite{Guermond_2016_Invariant_SJoNA}.
\end{remark}

\begin{lemma}[Guermond and Popov \cite{Guermond_2016_Invariant_SJoNA}]\label{lem:llf_g}\rm
	If the time step size $\Delta t^n$ satisfies
	\begin{equation}\label{eq:1d_convex_combination_dt}
		\Delta t^n \leqslant \dfrac{\Delta x_i}{\alpha_{\xl}+\alpha_{\xr}},
	\end{equation}
	then \cref{eq:1d_lo_decomp} is a convex combination,
	and the first-order LLF scheme is BP.
\end{lemma}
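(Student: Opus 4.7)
The plan is to establish both claims by a short algebraic verification followed by an appeal to the convexity of $\mathcal{G}$ and the admissibility of the intermediate states, which has been set up in the preceding remark.

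First I would verify that the formula \cref{eq:1d_lo_decomp} is indeed an algebraic rewriting of the one-step LLF update $\overline{\bU}^{\texttt{L}}_i = \overline{\bU}^{n}_i - \mu_{i}(\widehat{\bF}^{\texttt{L}}_{\xr} - \widehat{\bF}^{\texttt{L}}_{\xl})$. Substituting the definitions of $\widetilde{\bU}_{\xl}$ and $\widetilde{\bU}_{\xr}$ from \cref{eq:1d_llf_inter_states} into the right-hand side, the terms involving $\bF(\overline{\bU}^n_{i\pm 1})$ produce exactly $\mp\mu_i\widehat{\bF}^{\texttt{L}}_{i\pm 1/2}$ (with sign), while the terms involving $\overline{\bU}^n_{i\pm 1}$ and $\overline{\bU}^n_{i}$ combine with the $\alpha$-coefficients so that the $\alpha_{\xl}$ and $\alpha_{\xr}$ contributions collapse and reproduce the $-\mu_i(\widehat{\bF}^{\texttt{L}}_{\xr}-\widehat{\bF}^{\texttt{L}}_{\xl})$ pattern. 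This is a routine expansion.

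Next, reading off the three coefficients in \cref{eq:1d_lo_decomp}, their sum is
\begin{equation*}
\bigl[1-\mu_i(\alpha_{\xl}+\alpha_{\xr})\bigr] + \mu_i\alpha_{\xl} + \mu_i\alpha_{\xr} = 1,
\end{equation*}
and $\mu_i\alpha_{\xl},\mu_i\alpha_{\xr}\geqslant 0$ by construction. Under the hypothesis \cref{eq:1d_convex_combination_dt} one has $\mu_i(\alpha_{\xl}+\alpha_{\xr})\leqslant 1$, so the first coefficient is also nonnegative. Hence \cref{eq:1d_lo_decomp} is a convex combination.

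For the BP property I would invoke the preceding remark: since $\alpha_{\xl},\alpha_{\xr}$ are chosen as upper bounds for the Riemann fan speeds at the respective interfaces, each intermediate state $\widetilde{\bU}_{\xl},\widetilde{\bU}_{\xr}$ coincides with the space-average of the exact Riemann solution over a sufficiently wide interval, equivalently with the HLL intermediate state, and therefore lies in $\mathcal{G}$. Combined with $\overline{\bU}^{n}_i\in\mathcal{G}$, the convexity of $\mathcal{G}$ applied to the convex combination \cref{eq:1d_lo_decomp} yields $\overline{\bU}^{\texttt{L}}_i\in\mathcal{G}$, proving the lemma.

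The only nontrivial ingredient is the admissibility $\widetilde{\bU}_{\xl},\widetilde{\bU}_{\xr}\in\mathcal{G}$, but this is precisely the Guermond–Popov result quoted in the remark and does not need to be re-derived here; the remainder is bookkeeping and the convex-combination CFL condition.
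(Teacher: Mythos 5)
Your proof is correct and follows essentially the same route as the paper: the paper also justifies the lemma by noting that the intermediate states \cref{eq:1d_llf_inter_states} lie in $\mathcal{G}$ (as averages of exact Riemann solutions / HLL states), that \cref{eq:1d_lo_decomp} is a convex combination under \cref{eq:1d_convex_combination_dt}, and then invokes convexity of $\mathcal{G}$, citing Guermond--Popov for the admissibility of the intermediate states. Your additional algebraic verification that \cref{eq:1d_lo_decomp} reproduces the LLF update is fine bookkeeping but does not change the argument.
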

The proof relies on the fact that the intermediate state \cref{eq:1d_llf_inter_states} stays in the admissible state set $\mathcal{G}$ and the convexity of $\mathcal{G}$.
The arguments based on the convex combination follow in spirit to those used in \cite{Perthame_1996_positivity_NM}.

Upon defining the anti-diffusive flux $\Delta \widehat{\bF}_{i\pm\frac12} := \widehat{\bF}^{\texttt{H}}_{i\pm\frac12} - \widehat{\bF}^{\texttt{L}}_{i\pm\frac12}$ with $\widehat{\bF}^{\texttt{H}}_{i\pm\frac12} := \bF(\bU_{i\pm\frac12})$, a forward-Euler step applied to the semi-discrete high-order scheme for the cell average \cref{eq:semi_av_1d} can be written as
\begin{align}
	&\overline{\bU}^{\texttt{H}}_i = \overline{\bU}^{n}_i - \mu_i (\widehat{\bF}^{\texttt{H}}_{\xr} - \widehat{\bF}^{\texttt{H}}_{\xl}) = \overline{\bU}^{n}_i - \mu_i (\widehat{\bF}^{\texttt{L}}_{\xr} - \widehat{\bF}^{\texttt{L}}_{\xl}) - \mu_i (\Delta \widehat{\bF}_{\xr} - \Delta \widehat{\bF}_{\xl}) \nonumber\\
 &~~~~ =: \left[1-\mu_i\left(\alpha_{\xl}+\alpha_{\xr}\right)\right] \overline{\bU}^{n}_i
	+ \mu_i\alpha_{\xl} \widetilde{\bU}_{\xl}^{\texttt{H},+}
	+ \mu_i\alpha_{\xr} \widetilde{\bU}_{\xr}^{\texttt{H},-},\label{eq:1d_ho_decomp}\\
 &\widetilde{\bU}_{\xl}^{\texttt{H},+} := \left(\widetilde{\bU}_{\xl} + \frac{\Delta\widehat{\bF}_{\xl}}{\alpha_{\xl}}\right),\quad
	\widetilde{\bU}_{\xr}^{\texttt{H},-} := \left(\widetilde{\bU}_{\xr} - \frac{\Delta\widehat{\bF}_{\xr}}{\alpha_{\xr}}\right).\nonumber
\end{align}

With the low-order scheme \cref{eq:1d_lo_decomp} and high-order scheme \cref{eq:1d_ho_decomp} having the same form one can now define the limited scheme for the cell average as
\begin{equation}\label{eq:1d_limited_decomp}
	\overline{\bU}^{\texttt{Lim}}_i = \left[1-\mu_i\left(\alpha_{\xl}+\alpha_{\xr}\right)\right] \overline{\bU}^{n}_i
	+ \mu_i\alpha_{\xl}\widetilde{\bU}_{\xl}^{\texttt{Lim},+}
	+ \mu_i\alpha_{\xr}\widetilde{\bU}_{\xr}^{\texttt{Lim},-},
\end{equation}
with the limited intermediate states
\begin{align*}
	&\widetilde{\bU}_{\xl}^{\texttt{Lim},+} = \widetilde{\bU}_{\xl} + \frac{\Delta\widehat{\bF}^{\texttt{Lim}}_{\xl}}{\alpha_{\xl}}
	:= \widetilde{\bU}_{\xl} + \frac{\theta_{\xl}\Delta\widehat{\bF}_{\xl}}{\alpha_{\xl}}, \\
	&\widetilde{\bU}_{\xr}^{\texttt{Lim},-} = \widetilde{\bU}_{\xr} - \frac{\Delta\widehat{\bF}^{\texttt{Lim}}_{\xr}}{\alpha_{\xr}}
	:= \widetilde{\bU}_{\xr} - \frac{\theta_{\xr}\Delta\widehat{\bF}_{\xr}}{\alpha_{\xr}},
\end{align*}
where the coefficients $\theta_{i\pm \frac12}\in[0,1]$.

\begin{proposition}\rm
	If the cell average at the last time step $\overline{\bU}_i^n$ and the limited intermediate states $\widetilde{\bU}_{i\pm\frac12}^{\texttt{Lim},\mp}$ belong to the admissible state set $\mathcal{G}$,
	then the limited average update \cref{eq:1d_limited_decomp} is BP, i.e., $\overline{\bU}^{\texttt{Lim}}_i \in \mathcal{G}$, under the CFL condition \cref{eq:1d_convex_combination_dt}.
	If the SSP-RK3 \cref{eq:ssp_rk3} is used for the time integration,
	the high-order scheme is also BP.	
\end{proposition}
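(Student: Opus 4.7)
The plan is a direct convex-combination argument, split into three short steps.

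First, I would verify that the decomposition \cref{eq:1d_limited_decomp} is genuinely a convex combination of the three states $\overline{\bU}^n_i$, $\widetilde{\bU}_{\xl}^{\texttt{Lim},+}$, and $\widetilde{\bU}_{\xr}^{\texttt{Lim},-}$. The coefficients $\mu_i\alpha_{\xl}$ and $\mu_i\alpha_{\xr}$ are manifestly non-negative, and the CFL condition \cref{eq:1d_convex_combination_dt} is precisely what forces $1-\mu_i(\alpha_{\xl}+\alpha_{\xr}) \geqslant 0$. A one-line check shows the three coefficients sum to unity, so the right-hand side is an element of the convex hull of the three states.

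Second, I would invoke the hypothesis: $\overline{\bU}^n_i \in \mathcal{G}$ and $\widetilde{\bU}^{\texttt{Lim},\mp}_{i\pm\frac12} \in \mathcal{G}$, together with the convexity of $\mathcal{G}$ (assumed throughout the paper), immediately yield $\overline{\bU}^{\texttt{Lim}}_i \in \mathcal{G}$. This establishes BP for a single forward-Euler stage of the limited scheme.

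Third, for the SSP-RK3 update \cref{eq:ssp_rk3}, I would use the standard Shu--Osher trick: each stage is itself a convex combination $\alpha \bU^{\text{prev}} + (1-\alpha)\bigl(\bU^{\star}+\Delta t^n \bm{L}(\bU^{\star})\bigr)$ with $\alpha\in[0,1]$. Each forward-Euler substep appearing inside is BP by the first part (applied stage-by-stage, provided that the limiting is redone at every stage so that the intermediate states at each stage lie in $\mathcal{G}$), and then convexity of $\mathcal{G}$ propagates the BP property through the three stages.

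The only real subtlety is that the proposition is genuinely conditional: the hypothesis that the limited intermediate states belong to $\mathcal{G}$ is what the choice of $\theta_{i\pm\frac12}$ must enforce, and the construction of such $\theta$'s is not part of this statement. I would therefore emphasize in the proof that the result reduces the BP task for the cell average to the algebraic problem of choosing $\theta_{i\pm\frac12}\in[0,1]$ so that $\widetilde{\bU}_{i\pm\frac12}^{\texttt{Lim},\mp}\in\mathcal{G}$, which is addressed in the subsequent subsections. Beyond this observation, the proof is essentially a bookkeeping exercise and should occupy only a few lines.
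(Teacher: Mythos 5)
Your proposal is correct and follows essentially the same route as the paper: verify that under the CFL condition \cref{eq:1d_convex_combination_dt} the update \cref{eq:1d_limited_decomp} is a convex combination of $\overline{\bU}_i^n$ and $\widetilde{\bU}_{i\pm\frac12}^{\texttt{Lim},\mp}$, conclude $\overline{\bU}^{\texttt{Lim}}_i\in\mathcal{G}$ by convexity, and propagate the property through SSP-RK3 since each stage is a convex combination of forward-Euler steps. Your added remarks (re-limiting at every stage, and that the choice of $\theta_{i\pm\frac12}$ is deferred to the subsequent subsections) are consistent with the paper's later remarks and do not change the argument.
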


\begin{proof}
    Under the constraint \cref{eq:1d_convex_combination_dt}, the limited cell average update $\overline{\bU}^{\texttt{Lim}}_i$ is a convex combination of $\overline{\bU}_i^n$ and $\widetilde{\bU}_{i\pm\frac12}^{\texttt{Lim},\mp}$,
	thus it belongs to $\mathcal{G}$ due to the convexity of $\mathcal{G}$.
	Because the SSP-RK3 is a convex combination of forward-Euler stages,
	the high-order scheme equipped with the SSP-RK3 is also BP according to the convexity.
\end{proof}

\begin{remark}\rm
    The scheme \cref{eq:1d_limited_decomp} is conservative as it amounts to using the numerical flux
    $    \widehat{\bF}^{\texttt{L}}_{\xr} + \theta_{\xr}\Delta \widehat{\bF}_{\xr} = \theta_{\xr} \widehat{\bF}^{\texttt{H}}_{\xr} + (1 - \theta_{\xr}) \widehat{\bF}^{\texttt{L}}_{\xr}$,
    which is a convex combination of the high-order and low-order fluxes.
\end{remark}

\begin{remark}\rm
	It should be noted that the time step size \cref{eq:1d_convex_combination_dt} is determined based on the solutions at $t^n$.
	If the constraint is not satisfied at the later stage of the SSP-RK3,
	the BP property may not be achieved because \cref{eq:1d_limited_decomp} is no longer a convex combination.
	In our implementation, we start from the usual CFL condition \cref{eq:cfl_condition}.
    Then, if the high-order AF states need BP limitings and \cref{eq:1d_llf_inter_states} is not BP or \cref{eq:1d_convex_combination_dt} is not satisfied,
	the numerical solutions are set back to the last time step,
	and we rerun with a halved time step size until \cref{eq:1d_llf_inter_states} is BP and the constraint \cref{eq:1d_convex_combination_dt} is satisfied.
	This is also a typical implementation to save computational costs in other BP methods.
\end{remark}

The remaining task is to determine the coefficients at each interface $\theta_{i\pm\frac12}$ such that $\widetilde{\bU}_{i\pm\frac12}^{\texttt{Lim},\mp}\in\mathcal{G}$ and stay as close as possible to the high-order states $\widetilde{\bU}_{i\pm\frac12}^{\texttt{H}}$,
i.e., the goal is to find the largest $\theta_{i\pm\frac12}\in[0,1]$ such that $\widetilde{\bU}_{i\pm\frac12}^{\texttt{Lim},\mp}\in\mathcal{G}$.

\subsection{Application to scalar conservation laws}
This section is devoted to applying the convex limiting approach to scalar conservation laws \cref{eq:1d_scalar},
such that the numerical solutions satisfy the global or local MP.
For the global MP, the blending coefficient $\theta_{i+\frac12}\in[0,1]$ should be chosen such that
$m_0 \leqslant \tilde{u}_{i+\frac12}^{\texttt{Lim},\pm} \leqslant M_0$,
with $m_0, M_0$ defined in \cref{eq:1d_scalar_g},
which gives
\begin{equation*}
	\theta_{\xr} = \begin{cases}
		\min\left\{1, \frac{\alpha_{\xr}(\tilde{u}_{\xr}-m_0)}{\Delta \hat{f}_{\xr}},
		\frac{\alpha_{\xr}(M_0-\tilde{u}_{\xr})}{\Delta \hat{f}_{\xr}} \right\}, &\text{if}~~ \Delta \hat{f}_{\xr} > 0, \\
		\min\left\{1, \frac{\alpha_{\xr}(m_0-\tilde{u}_{\xr})}{\Delta \hat{f}_{\xr}},
		\frac{\alpha_{\xr}(\tilde{u}_{\xr}-M_0)}{\Delta \hat{f}_{\xr}} \right\}, &\text{if}~~ \Delta \hat{f}_{\xr} < 0. \\
	\end{cases}
\end{equation*}
To avoid a small denominator, the limited anti-diffusive flux can be obtained directly,
\begin{equation*}
	\Delta\hat{f}^{\texttt{Lim}}_{\xr} = \begin{cases}
		\min\left\{\Delta\hat{f}_{\xr}, ~\alpha_{\xr}(\tilde{u}_{\xr}-m_0),
		~\alpha_{\xr}(M_0-\tilde{u}_{\xr}) \right\}, &\text{if}~~ \Delta \hat{f}_{\xr} \geqslant 0, \\
		\max\left\{\Delta\hat{f}_{\xr}, ~\alpha_{\xr}(m_0-\tilde{u}_{\xr}),
		~\alpha_{\xr}(\tilde{u}_{\xr}-M_0) \right\}, &\text{otherwise}. \\
	\end{cases}
\end{equation*}
On the other hand, one can also enforce the local MP
$u^{\min}_i \leqslant \tilde{u}_{i+\frac12}^{\texttt{Lim},-} \leqslant u^{\max}_i$,
$u^{\min}_{i+1} \leqslant \tilde{u}_{i+\frac12}^{\texttt{Lim},+} \leqslant u^{\max}_{i+1}$,
which helps to suppress spurious oscillations and improve shock-capturing ability.
The corresponding limited anti-diffusive flux is
\begin{equation*}
	\Delta\hat{f}^{\texttt{Lim}}_{\xr} = \begin{cases}
		\min\left\{\Delta\hat{f}_{\xr}, ~\alpha_{\xr}(\tilde{u}_{\xr}-u^{\min}_i),
		~\alpha_{\xr}(u^{\max}_{i+1}-\tilde{u}_{\xr}) \right\}, &\text{if}~ \Delta \hat{f}_{\xr} \geqslant 0, \\
		\max\left\{\Delta\hat{f}_{\xr}, ~\alpha_{\xr}(u^{\min}_{i+1}-\tilde{u}_{\xr}),
		~\alpha_{\xr}(\tilde{u}_{\xr}-u^{\max}_{i}) \right\}, &\text{otherwise}. \\
	\end{cases}
\end{equation*}
The choice of the local bounds can be based on the intermediate states
\begin{equation*}
	u^{\min}_i = \min\left\{\bar{u}_i^n, ~\tilde{u}_{\xl}, ~\tilde{u}_{\xr}\right\},~
	u^{\max}_i = \max\left\{\bar{u}_i^n, ~\tilde{u}_{\xl}, ~\tilde{u}_{\xr}\right\}.
\end{equation*}
Finally, the numerical flux is
\begin{equation}\label{eq:1d_flux_limited_scalar}
	\hat{f}^{\texttt{Lim}}_{\xr} = \hat{f}^{\texttt{L}}_{\xr} + \Delta\hat{f}^{\texttt{Lim}}_{\xr}.
\end{equation}

\subsection{Application to the compressible Euler equations}

This section aims at enforcing the positivity of density and pressure.

\subsubsection{Positivity of density}
The first step is to impose the density positivity
$\widetilde{\bU}_{\xr}^{\texttt{Lim},\pm,\rho} > \bar{\varepsilon}^\rho_{\xr}$,
where $\bU^{*,\rho}$ denotes the density component of $\bU^{*}$,
and $\bar{\varepsilon}^\rho_{\xr}$ is a small positive number defined by $\bar{\varepsilon}^\rho_{\xr} := \min\{10^{-13},  \widetilde{\bU}_{\xr}^{\rho}\}$.
The corresponding density component of the limited anti-diffusive flux is
\begin{equation*}
	\Delta\widehat{\bF}^{\texttt{Lim},*,\rho}_{\xr} = \begin{cases}
		\min\left\{\Delta\widehat{\bF}^{\rho}_{\xr}, ~\alpha_{\xr}\left(\widetilde{\bU}^{\rho}_{\xr}-\bar{\varepsilon}^\rho_{\xr}\right) \right\}, &\text{if}~ \Delta \widehat{\bF}^{\rho}_{\xr} \geqslant 0, \\
		\max\left\{\Delta\widehat{\bF}^{\rho}_{\xr}, ~\alpha_{\xr}\left(\bar{\varepsilon}^\rho_{\xr}-\widetilde{\bU}^{\rho}_{\xr}\right) \right\}, &\text{otherwise}. \\
	\end{cases}
\end{equation*}
Then the density component of the limited numerical flux is $\widehat{\bF}_{\xr}^{\texttt{Lim}, *, \rho} = \widehat{\bF}_{\xr}^{\texttt{L}, \rho} + \Delta\widehat{\bF}_{\xr}^{\texttt{Lim}, *, \rho}$,
with the other components remaining the same as $\widehat{\bF}_{\xr}^{\texttt{H}}$.

\subsubsection{Positivity of pressure}
The second step is to enforce pressure positivity
$p(\widetilde{\bU}_{\xr}^{\texttt{Lim},\pm}) > \bar{\varepsilon}^p_{\xr} := \min\{10^{-13},  p(\widetilde{\bU}_{\xr})\}$, $p(\bU^{*})$ denotes the pressure recovered from $\bU^{*}$, with
\begin{equation*}
	\widetilde{\bU}_{\xr}^{\texttt{Lim},\pm} = \widetilde{\bU}_{\xr} \pm \frac{\theta_{\xr}\Delta \widehat{\bF}_{\xr}^{\texttt{Lim}, *}}{\alpha_{\xr}},\quad
	\Delta \widehat{\bF}_{\xr}^{\texttt{Lim}, *} = \widehat{\bF}_{\xr}^{\texttt{Lim}, *} - \widehat{\bF}_{\xr}^{\texttt{L}}.
\end{equation*}
Such constraints lead to two inequalities
\begin{equation}\label{eq:pressure_inequality}
	A_{\xr}\theta^2_{\xr} \pm B_{\xr}\theta_{\xr} < C_{\xr},
\end{equation}
with the coefficients
\begin{align*}
	&A_{\xr} = \dfrac{1}{2} \left(\Delta \widehat\bF^{\texttt{Lim}, *, \rho v}_{\xr}\right)^2
	- \Delta \widehat\bF^{\texttt{Lim}, *, \rho}_{\xr} \Delta \widehat\bF^{\texttt{Lim}, *, E}_{\xr}, \\
	&B_{\xr} = \alpha_{\xr}\left(\Delta \widehat\bF^{\texttt{Lim}, *, \rho}_{\xr} \widetilde\bU^{E}_{\xr}
	+ \widetilde\bU^{\rho}_{\xr} \Delta \widehat\bF^{\texttt{Lim}, *, E}_{\xr}
	- \Delta \widehat\bF^{\texttt{Lim}, *, \rho v}_{\xr} \widetilde\bU^{\rho v}_{\xr}
	- \tilde{\varepsilon} \Delta \widehat\bF^{\texttt{Lim}, *, \rho}_{\xr}\right), \\
	&C_{\xr} = \alpha_{\xr}^2\left(\widetilde\bU^{\rho}_{\xr} \widetilde\bU^{E}_{\xr}
	- \dfrac{1}{2} \left(\widetilde\bU^{\rho v}_{\xr}\right)^2
	- \tilde{\varepsilon}\widetilde\bU^{\rho}_{\xr}\right), \\
    & \tilde{\varepsilon} = \bar{\varepsilon}^p_{\xr}/(\gamma-1).
\end{align*}
Following \cite{Kuzmin_2020_Monolithic_CMiAMaE}, the inequalities \cref{eq:pressure_inequality} hold under the linear sufficient condition
\begin{equation*}
	\left(\max\left\{0, A_{\xr}\right\} + \left|B_{\xr}\right|\right)\theta_{\xr}< C_{\xr},
\end{equation*}
if making use of $\theta_{\xr}^2 \leqslant \theta_{\xr},~ \theta_{\xr}\in[0,1]$.
Thus the coefficient can be chosen as
\begin{equation*}
	\theta_{\xr} = \min\left\{1,~ \frac{C_{\xr}}{\max\{0, A_{\xr}\} + \abs{B_{\xr}}}\right\},
\end{equation*}
and the final limited numerical flux is
\begin{equation}\label{eq:1d_flux_limited_Euler}
	\widehat{\bF}_{\xr}^{\texttt{Lim}} = \widehat{\bF}_{\xr}^{\texttt{L}} + \theta_{\xr}\Delta\widehat{\bF}_{\xr}^{\texttt{Lim}, *}.
\end{equation}

\section{Scaling limiter for point value}\label{sec:1d_limiting_point}
To achieve the BP property, it is also necessary to introduce BP limiting for the point value.
As one will see in the numerical tests in \Cref{sec:results},
using power law reconstruction or BP limiting for cell average, individually or in combination, cannot guarantee the bounds.
As there is no conservation requirement on the point value update,
a simple scaling limiter \cite{Liu_1996_Nonoscillatory_SJoNA} is directly performed on the high-order point values rather than on the flux for the cell average.

A first-order LLF scheme for the point value update can be
\begin{equation}\label{eq:1d_llf_pnt}
	 \bU_{\xr}^{\texttt{L}} =  \bU_{\xr}^{n} - \dfrac{2\Delta t^n}{\Delta x_i + \Delta x_{i+1}}\left(\widehat{\bF}^{\texttt{L}}_{i+1}( \bU_{i+\frac12}^n,  \bU_{i+\frac32}^n)
	- \widehat{\bF}^{\texttt{L}}_{i}( \bU_{i-\frac12}^n,  \bU_{i+\frac12}^n)\right),
\end{equation}
with the numerical flux
\begin{align*}
	&\widehat{\bF}^{\texttt{L}}_{i}( \bU_{i-\frac12}^n,  \bU_{i+\frac12}^n) =
	\frac12\left( \bF(\bU_{i-\frac12}^n) +  \bF(\bU_{i+\frac12}^n)\right) - \frac{\alpha_{i}}{2}\left(\bU_{i+\frac12}^n - \bU_{i-\frac12}^n\right),\\
	&\alpha_{i} = \max\{\lambda(\bU_{i-\frac12}^n), ~\lambda(\bU_{i+\frac12}^n)\}.
\end{align*}
Such an LLF scheme can be interpreted as a scheme on a staggered mesh if the point value is viewed as the cell average on the staggered mesh.
Based on the proof in \cite{Perthame_1996_positivity_NM},
it is straightforward to obtain the following Lemma.

\begin{lemma}\rm
	The LLF scheme for the point value \cref{eq:1d_llf_pnt} is BP under the CFL condition
	\begin{equation}\label{eq:1d_pnt_llf_dt}
		\Delta t^n \leqslant \dfrac{\Delta x_i + \Delta x_{i+1}}{4 \max\{\alpha_i, \alpha_{i+1}\} }.
	\end{equation}
\end{lemma}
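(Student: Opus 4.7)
The plan is to mimic the proof of \Cref{lem:llf_g} applied on the \emph{staggered mesh} induced by the point values, exploiting the hint given just before the lemma that the point value $\bU_{\xr}$ can be viewed as a cell average on a staggered cell of width $\Delta x_{\xr}^{\text{stag}} := (\Delta x_i + \Delta x_{i+1})/2$, whose interfaces are located at the original cell centers $x_i$ and $x_{i+1}$. Setting $\mu := \Delta t^n / \Delta x_{\xr}^{\text{stag}} = 2\Delta t^n/(\Delta x_i + \Delta x_{i+1})$, the scheme \cref{eq:1d_llf_pnt} has exactly the structure of a first-order LLF update of a cell average on this staggered mesh, with neighbouring values $\bU_{\xl}^n, \bU_{\xr}^n, \bU_{i+\frac32}^n$ and numerical viscosities $\alpha_i,\alpha_{i+1}$ at the staggered interfaces.

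First, I would algebraically rewrite \cref{eq:1d_llf_pnt} in Guermond--Popov form
\begin{equation*}
\bU_{\xr}^{\texttt{L}} = \bigl[1 - \mu(\alpha_i + \alpha_{i+1})\bigr]\bU_{\xr}^{n}
+ \mu\alpha_i \widetilde{\bU}_{i} + \mu\alpha_{i+1} \widetilde{\bU}_{i+1},
\end{equation*}
where the intermediate states at the staggered interfaces are
\begin{equation*}
\widetilde{\bU}_{i} = \tfrac12\bigl(\bU_{\xl}^n + \bU_{\xr}^n\bigr) + \tfrac{1}{2\alpha_i}\bigl(\bF(\bU_{\xl}^n) - \bF(\bU_{\xr}^n)\bigr), \quad
\widetilde{\bU}_{i+1} = \tfrac12\bigl(\bU_{\xr}^n + \bU_{i+\frac32}^n\bigr) + \tfrac{1}{2\alpha_{i+1}}\bigl(\bF(\bU_{\xr}^n) - \bF(\bU_{i+\frac32}^n)\bigr).
\end{equation*}
The cancellation of the $\bF(\bU_{\xr}^n)$ terms and the definitions of $\alpha_i,\alpha_{i+1}$ make this rewriting direct.

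Second, I would invoke the observation already made after \cref{eq:1d_llf_inter_states}: because each $\alpha$ at a staggered interface is chosen as an upper bound on the local wave speed, $\widetilde{\bU}_{i}$ and $\widetilde{\bU}_{i+1}$ are averages of exact Riemann solutions (equivalently, HLL intermediate states), hence belong to the convex invariant set $\mathcal{G}$. Since $\bU_{\xr}^n \in \mathcal{G}$ by hypothesis, the update $\bU_{\xr}^{\texttt{L}}$ is a convex combination of three elements of $\mathcal{G}$ provided all three coefficients are non-negative, which by convexity of $\mathcal{G}$ places it in $\mathcal{G}$.

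Finally, non-negativity of the coefficients reduces to $\mu(\alpha_i + \alpha_{i+1}) \leqslant 1$. Using the crude bound $\alpha_i + \alpha_{i+1} \leqslant 2\max\{\alpha_i,\alpha_{i+1}\}$ and the definition of $\mu$, this is implied by
\begin{equation*}
\Delta t^n \leqslant \dfrac{\Delta x_i + \Delta x_{i+1}}{4\max\{\alpha_i,\alpha_{i+1}\}},
\end{equation*}
which is precisely \cref{eq:1d_pnt_llf_dt}. There is no serious obstacle here; the only subtlety is making sure the Guermond--Popov rewriting uses the staggered width $(\Delta x_i+\Delta x_{i+1})/2$ consistently, and noting that \cref{eq:1d_pnt_llf_dt} is a (slightly suboptimal but simpler) sufficient condition obtained by replacing $\alpha_i+\alpha_{i+1}$ with $2\max\{\alpha_i,\alpha_{i+1}\}$.
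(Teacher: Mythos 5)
Your proposal is correct and follows essentially the same route the paper intends: interpreting \cref{eq:1d_llf_pnt} as a first-order LLF update on the staggered mesh of width $(\Delta x_i+\Delta x_{i+1})/2$ and applying the convex-combination/intermediate-state argument of \Cref{lem:llf_g}, whose CFL bound with $\alpha_i+\alpha_{i+1}\leqslant 2\max\{\alpha_i,\alpha_{i+1}\}$ yields exactly \cref{eq:1d_pnt_llf_dt}. The algebraic rewriting and the identification of the staggered intermediate states check out, so there is nothing to add.
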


The limited state is obtained by blending the high-order AF scheme \cref{eq:semi_pnt_1d} with the forward Euler scheme and the LLF scheme \cref{eq:1d_llf_pnt} as $\bU_{\xr}^{\texttt{Lim}} = \theta_{\xr} \bU_{\xr}^{\texttt{H}} + (1-\theta_{\xr}) \bU_{\xr}^{\texttt{L}}$,
such that $\bU_{\xr}^{\texttt{Lim}}\in\mathcal{G}$.

\begin{remark}\rm
	In the FVS for the point value update, the cell-centered value $\bU_i$ is used.
	It is possible that $\bU_i\notin\mathcal{G}$, then a scaling limiter \cite{Zhang_2010_positivity_JoCP} will be used to blend $\bU_i$ with $\overline{\bU}_i$, such that $\bU_i\in\mathcal{G}$.
\end{remark}

\subsection{Application to scalar conservation laws}
This section enforces the global MP $m_0 \leqslant u_{\xr}^{\texttt{Lim}} \leqslant M_0$ by choosing the coefficient as
\begin{equation*}
	\theta_{\xr} = \begin{cases}
		\dfrac{u_{\xr}^{\texttt{L}}-m_0}{u_{\xr}^{\texttt{L}}-u_{\xr}^{\texttt{H}}},&\text{if}~~ u_{\xr}^{\texttt{H}} < m_0, \\
		\dfrac{M_0-u_{\xr}^{\texttt{L}}}{u_{\xr}^{\texttt{H}}-u_{\xr}^{\texttt{L}}},&\text{if}~~ u_{\xr}^{\texttt{H}} > M_0. \\
	\end{cases}
\end{equation*}
The final limited state is
\begin{equation}\label{eq:1d_pnt_limited_state_scalar}
	u_{\xr}^{\texttt{Lim}} = \theta_{\xr} u_{\xr}^{\texttt{H}} + \left(1-\theta_{\xr}\right) u_{\xr}^{\texttt{L}}.
\end{equation}
	One can also enforce the local bounds $u_{\xr}^{\min} \leqslant u_{\xr}^{\texttt{Lim}} \leqslant u_{\xr}^{\max}$, with
	\begin{equation*}
		u^{\min}_{\xr} = \min\left\{\bar{u}_i^n, ~\bar{u}_{i+1}^n, ~u_{\xr}^n\right\},~
		u^{\max}_{\xr} = \max\left\{\bar{u}_i^n, ~\bar{u}_{i+1}^n, ~u_{\xr}^n\right\}.
	\end{equation*}

\subsection{Application to the compressible Euler equations}
The limiting consists of two steps.
First, the high-order state $\bU_{\xr}^{\texttt{H}}$ is modified as $\bU_{\xr}^{\texttt{Lim}, *}$,
such that its density component satisfies $\bU_{\xr}^{\texttt{Lim}, *, \rho} > \varepsilon^\rho_{\xr}:=\min\{10^{-13}, \bU_{\xr}^{\texttt{L}, \rho}\}$.
Solving this inequality gives the coefficient
\begin{equation*}
	\theta^{*}_{\xr} = \begin{cases}
		\dfrac{\bU_{\xr}^{\texttt{L}, \rho}-\varepsilon^\rho_{\xr}}{\bU_{\xr}^{\texttt{L}, \rho}-\bU_{\xr}^{\texttt{H}, \rho}},&\text{if}~~ \bU_{\xr}^{\texttt{H}, \rho} < \varepsilon^\rho_{\xr}, \\
		1, &\text{otherwise}. \\
	\end{cases}
\end{equation*}
Then the density component of the limited state is $\bU_{\xr}^{\texttt{Lim}, *, \rho} = \theta^{*}_{\xr} \bU_{\xr}^{\texttt{H}, \rho} + (1-\theta^{*}_{\xr}) \bU_{\xr}^{\texttt{L}, \rho}$,
with the other components remaining the same as $\bU_{\xr}^{\texttt{H}}$.

Then the limited state $\bU_{\xr}^{\texttt{Lim}, *}$ is modified as $\bU_{\xr}^{\texttt{Lim}}$,
such that it gives positive pressure, i.e., $p\left(\bU_{\xr}^{\texttt{Lim}}\right) > \varepsilon^p_{\xr}$.
Let $\bU_{\xr}^{\texttt{Lim}} = \theta^{**}_{\xr} \bU_{\xr}^{\texttt{Lim}, *} + (1-\theta^{**}_{\xr}) \bU_{\xr}^{\texttt{L}}$.
Note that the pressure is a concave function (see e.g. \cite{Zhang_2011_Maximum_PotRSAMPaES}) of the conservative variables,
such that
\begin{equation*}
	p\left(\bU_{\xr}^{\texttt{Lim}}\right) \geqslant \theta^{**}_{\xr}p\left(\bU_{\xr}^{\texttt{Lim}, *}\right) + \left(1-\theta^{**}_{\xr}\right)p\left(\bU_{\xr}^{\texttt{L}}\right)
\end{equation*}
based on Jensen's inequality and $\bU_{\xr}^{\texttt{Lim}, *, \rho} > 0$, $\bU_{\xr}^{\texttt{L}, \rho} > 0$, $\theta_{\xr}^{**} \in [0,1]$.
Thus a sufficient condition is
\begin{equation*}
	\theta^{**}_{\xr} = \begin{cases}
		\dfrac{p\left(\bU_{\xr}^{\texttt{L}}\right) - \varepsilon^p_{\xr}}{p\left(\bU_{\xr}^{\texttt{L}}\right) - p\left(\bU_{\xr}^{\texttt{Lim}, *}\right)},&\text{if}~~ p\left(\bU_{\xr}^{\texttt{Lim}, *}\right) < \varepsilon^p_{\xr}, \\
		1, &\text{otherwise}. \\
	\end{cases}
\end{equation*}
The final limited state is
\begin{equation}\label{eq:1d_pnt_limited_state_Euler}
	\bU_{\xr}^{\texttt{Lim}} = \theta^{**}_{\xr} \bU_{\xr}^{\texttt{Lim}, *} + \left(1-\theta^{**}_{\xr}\right) \bU_{\xr}^{\texttt{L}}.
\end{equation}

Let us summarize the main results of the BP AF methods in this paper.
\begin{theorem}\rm
	If the initial numerical solution $\overline{\bU}_i^0, \bU_{\xr}^0\in\mathcal{G}$ for all $i$,
	and the time step size satisfies \cref{eq:1d_convex_combination_dt} and \cref{eq:1d_pnt_llf_dt},
	then the AF methods \cref{eq:semi_av_1d}-\cref{eq:semi_pnt_1d} equipped with the SSP-RK3 \cref{eq:ssp_rk3} and the BP limitings
	\begin{itemize}[leftmargin=*]
		\item \cref{eq:1d_flux_limited_scalar} and \cref{eq:1d_pnt_limited_state_scalar} preserve the maximum principle for scalar case;
		\item \cref{eq:1d_flux_limited_Euler} and \cref{eq:1d_pnt_limited_state_Euler} preserve the density and pressure positivity for the Euler equations.
	\end{itemize}
\end{theorem}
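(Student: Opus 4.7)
\emph{Proof plan.} I would proceed by induction on the time index $n$: the base case holds by assumption, and the inductive step reduces to showing that a single forward-Euler stage applied to data in $\mathcal{G}$ produces updated cell averages and point values still in $\mathcal{G}$. Once that is established, the full SSP-RK3 step in \cref{eq:ssp_rk3} preserves $\mathcal{G}$ because each of its two convex-combination stages combines a forward-Euler state with a previously-admissible state, and $\mathcal{G}$ is convex by hypothesis. In other words, the theorem decomposes into (i) one forward-Euler BP claim for the cell average, (ii) one forward-Euler BP claim for the point value, and (iii) the standard SSP-RK3 convexity argument.

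For the cell average, the Proposition in \Cref{sec:1d_limiting_average} already packages the limited update as the convex combination \cref{eq:1d_limited_decomp} of $\overline{\bU}_i^n$ and the two limited intermediate states $\widetilde{\bU}_{i\pm\frac12}^{\texttt{Lim},\mp}$, with non-negative weights summing to one whenever \cref{eq:1d_convex_combination_dt} holds. It therefore suffices to verify that $\widetilde{\bU}_{i\pm\frac12}^{\texttt{Lim},\mp}\in\mathcal{G}$. The unlimited states $\widetilde{\bU}_{\xr}$ in \cref{eq:1d_llf_inter_states} are HLL-type averages of an exact Riemann solution and thus lie in $\mathcal{G}$ by the remark after \Cref{lem:llf_g}. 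I would then verify, case by case, that the explicit choice of $\Delta\widehat{\bF}^{\texttt{Lim}}_{\xr}$ (for the scalar case) and $\theta_{\xr}$ (for Euler) enforces the defining inequalities of $\mathcal{G}$: for scalars this is the elementary MP check on the one-sided bounds; for Euler the density-positivity step is a single linear inequality with an explicit solution, and the pressure-positivity step reduces, via $\theta_{\xr}^2\leqslant\theta_{\xr}$ on $[0,1]$, to the linear sufficient condition $(\max\{0,A_{\xr}\}+|B_{\xr}|)\theta_{\xr}<C_{\xr}$, which the displayed $\theta_{\xr}$ satisfies.

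For the point value I would write the limited state as $\bU_{\xr}^{\texttt{Lim}}=\theta_{\xr}\bU_{\xr}^{\texttt{H}}+(1-\theta_{\xr})\bU_{\xr}^{\texttt{L}}$ with $\theta_{\xr}\in[0,1]$. The staggered-mesh LLF state $\bU_{\xr}^{\texttt{L}}$ in \cref{eq:1d_llf_pnt} is in $\mathcal{G}$ under \cref{eq:1d_pnt_llf_dt} by the Lemma. In the scalar case the blending is a one-dimensional interpolation and the stated $\theta_{\xr}$ is exactly the largest value for which the MP bounds are preserved. For Euler I would argue in the two stages of \Cref{sec:1d_limiting_point}: first $\theta^{*}_{\xr}$ secures $\bU_{\xr}^{\texttt{Lim},*,\rho}>\varepsilon^\rho_{\xr}$ by a linear interpolation between positive and possibly-negative density; then I would invoke concavity of $p(\bU)$ on $\{\rho>0\}$ and Jensen's inequality applied to the convex combination with weight $\theta^{**}_{\xr}$ to conclude $p(\bU_{\xr}^{\texttt{Lim}})\geqslant\varepsilon^p_{\xr}$, which gives the explicit $\theta^{**}_{\xr}$.

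The main obstacle, I expect, is the ``hidden'' admissibility requirement hidden in the FVS-based point update, namely that the cell-centred value $\bU_i=(-\bU_{\xl}+6\overline{\bU}_i-\bU_{\xr})/4$ used to form $\bF_i$ need not itself lie in $\mathcal{G}$; the proof must invoke the auxiliary scaling limiter mentioned in the remark after \cref{eq:1d_llf_pnt}, pulling $\bU_i$ toward $\overline{\bU}_i\in\mathcal{G}$ to guarantee a well-defined $\bF_i$ before the point-value limiter is applied. After this is in place, the induction closes: each forward-Euler stage keeps both degrees of freedom in $\mathcal{G}$, the SSP-RK3 convex combinations preserve $\mathcal{G}$, and hence both families of limitings yield, respectively, the MP in the scalar case and positivity of density and pressure in the Euler case.
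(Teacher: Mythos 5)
Your plan is correct and follows essentially the same route as the paper, which states this theorem as a summary whose proof is exactly the assembly you describe: the convex-combination Proposition for the limited cell average under \cref{eq:1d_convex_combination_dt}, the Lemma giving BP of the LLF point-value scheme under \cref{eq:1d_pnt_llf_dt}, the explicit limiter constructions (including the linear sufficient condition for pressure and Jensen's inequality via concavity of $p$), and the SSP-RK3 convexity argument. Your observation about limiting the cell-centred value $\bU_i$ toward $\overline{\bU}_i$ is likewise the paper's own remedy, stated in the remark following \cref{eq:1d_llf_pnt}.
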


\section{Numerical results}\label{sec:results}
This section conducts some numerical tests to verify the accuracy of using the FVS for point value updates,
the BP property, and the shock-capturing ability of our AF methods.

\subsection{Scalar conservation laws}
This section shows the results for the linear advection equation and the Burgers' equation, which demonstrate that the proposed limiting can preserve the MP and suppress oscillations well.

\begin{example}[Advection equation]\label{ex:1d_advection_discontinuity}\rm
	Consider the 1D advection equation $u_t + u_x = 0$,
	on the periodic domain $[-1, 1]$ with the initial data \cite{Jiang_1996_Efficient_JoCP}
	\begin{equation*}
		\begin{cases}
			~\frac{1}{6}\left(G_1(x, \beta, z-\delta) + G_1(x, \beta, z+\delta) + 4G_1(x, \beta, z)\right), &\text{if}~ -0.8\leqslant x \leqslant -0.6, \\
			~1, &\text{if}~ -0.4 \leqslant x \leqslant -0.2, \\
			~1 - \abs{10(x-0.1)}, &\text{if}~~ 0 \leqslant x \leqslant 0.2, \\
			~\frac{1}{6}\left(G_2(x, \alpha, a-\delta) + G_2(x, \alpha, a+\delta) + 4G_2(x, \alpha, a)\right), &\text{if}~~ 0.4 \leqslant x \leqslant 0.6, \\
			~0, &\text{otherwise}, \\
		\end{cases}
	\end{equation*}
	where $G_1(x, \beta, z) = \exp(-\beta(x-z)^2)$, $G_2(x, \alpha, a) = \sqrt{\max(1-\alpha^2(x-a)^2, 0)}$,
	and the constants are $a = -0.5, z = -0.7, \delta = 0.005, \alpha = 10, \beta = \ln 2/(36\delta ^2)$.
	The problem is solved for one period, i.e., until $T= 2$.
\end{example}

For the advection equation, the JS and LLF FVS are equivalent.
The maximal CFL number leading to a stable simulation is $0.41$ without any limiting,
and it reduces to $0.13$ when only the power law reconstruction is activated,
and it increases a little bit to $0.42$ when only the BP limitings are used.
When the power law reconstruction and the BP limitings are employed together,
the maximal CFL number can be $0.4$.
The reduction of the CFL number with the power law reconstruction for semi-discrete AF has, in fact, not been noticed previously.
Thus, in the following simulations we try not to use the power law reconstruction unless otherwise stated.

The results obtained with different limitings are shown in \cref{fig:1d_advection_limiting},
which are computed with $400$ cells and the CFL number is $0.1$.
The ranges of the numerical solutions are listed in \cref{tab:1d_advection_bounds},
considering both the cell averages and point values.
One can observe that there are some oscillations near the discontinuities without any limiting,
and that the power law reconstruction can eliminate the oscillations effectively but is still not BP.
The activation of either the BP limiting for the cell average alone or the BP limiting for the point value alone also fails to preserve the bounds $[0,1]$, as one can see from \cref{tab:1d_advection_bounds},
as is the case when using both the BP limiting for the cell average and the power law reconstruction in the point value update.
Only when a BP limiting is performed on both the cell average and the point value, the BP property is achieved,
showing that using the two BP limitings simultaneously is necessary for the preservation of the MP.
\Cref{fig:1d_advection_limiting} also shows the results obtained by imposing the global or local MP for the cell average and point value (without power law reconstruction),
indicating that the use of local MP tends to dissipate the numerical solutions near the discontinuities and clip maxima more than the global MP.

\begin{figure}[htbp]
	\centering
	\begin{subfigure}[b]{0.48\textwidth}
		\centering
		\includegraphics[width=1.0\linewidth]{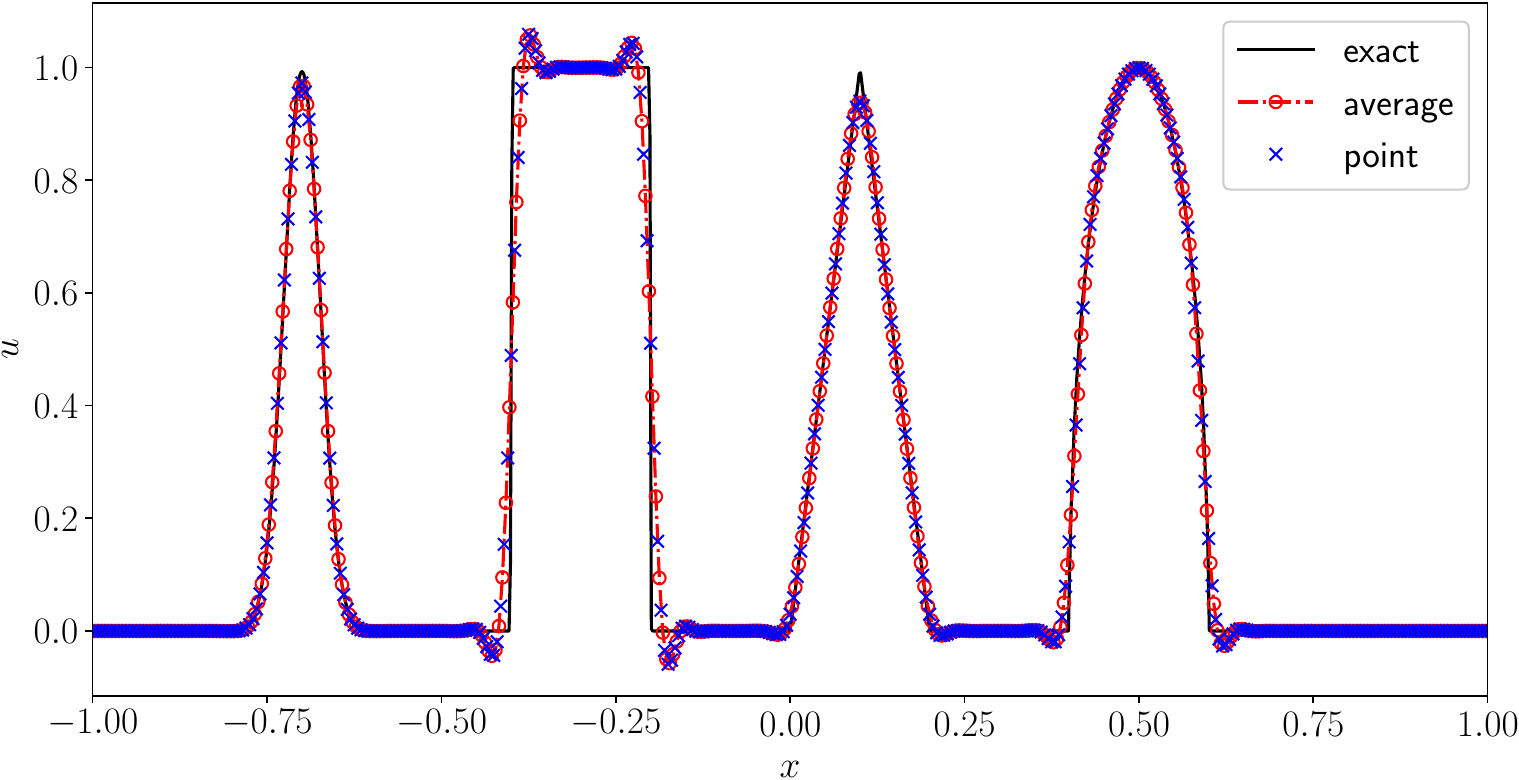}
	\end{subfigure}
	\begin{subfigure}[b]{0.48\textwidth}
		\centering
		\includegraphics[width=1.0\linewidth]{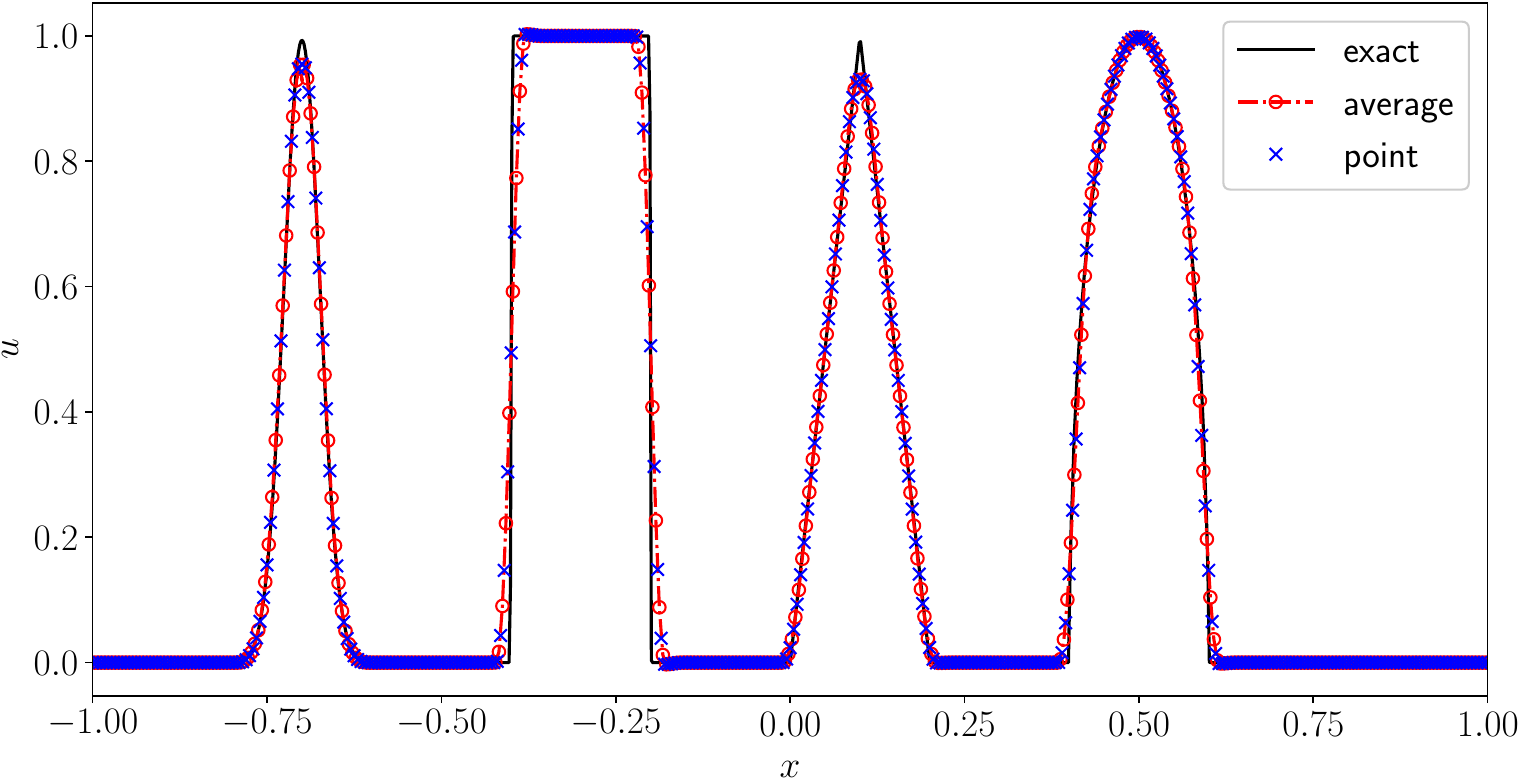}
	\end{subfigure}
	
	\begin{subfigure}[b]{0.48\textwidth}
		\centering
		\includegraphics[width=1.0\linewidth]{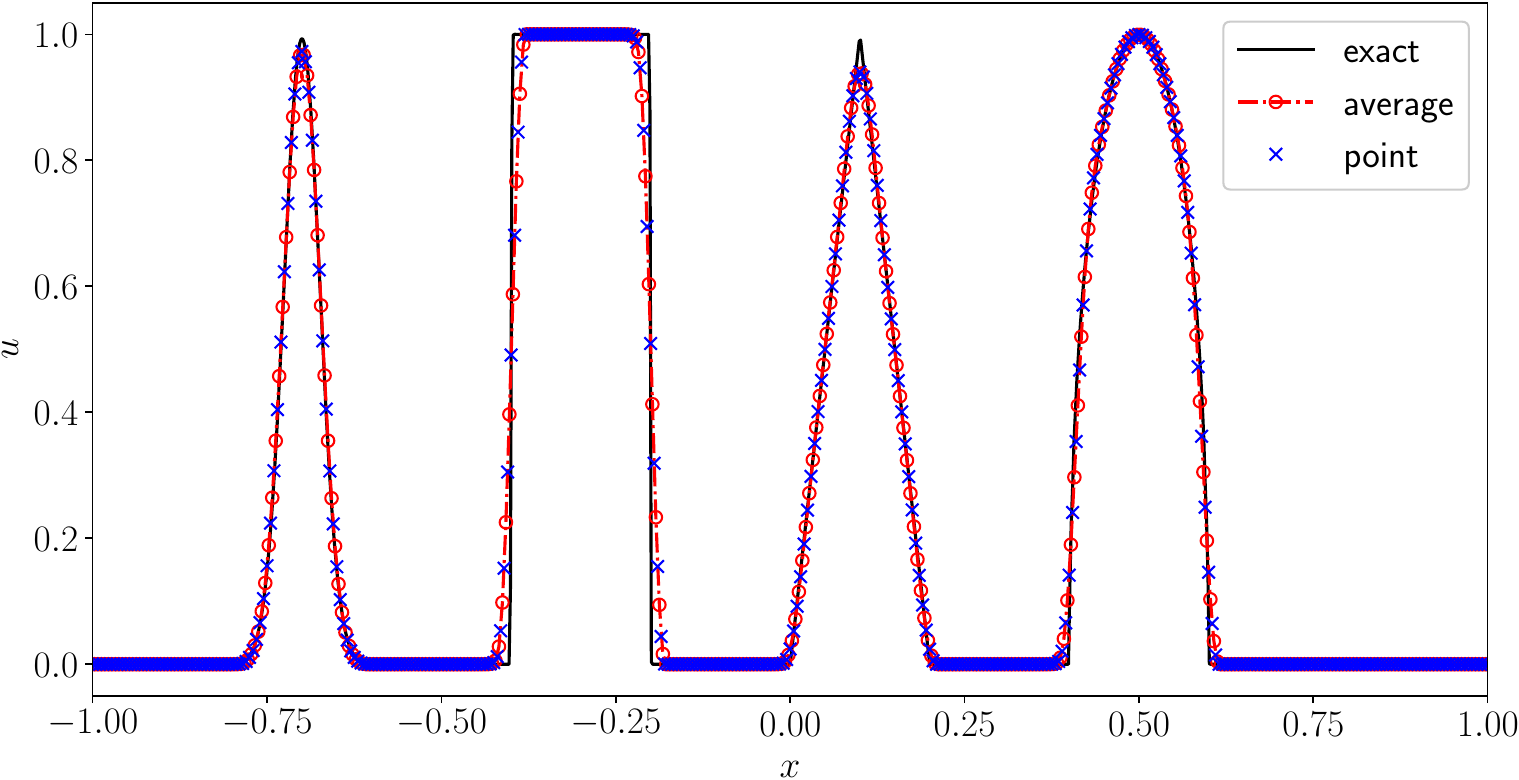}
	\end{subfigure}
	\begin{subfigure}[b]{0.48\textwidth}
		\centering
		\includegraphics[width=1.0\linewidth]{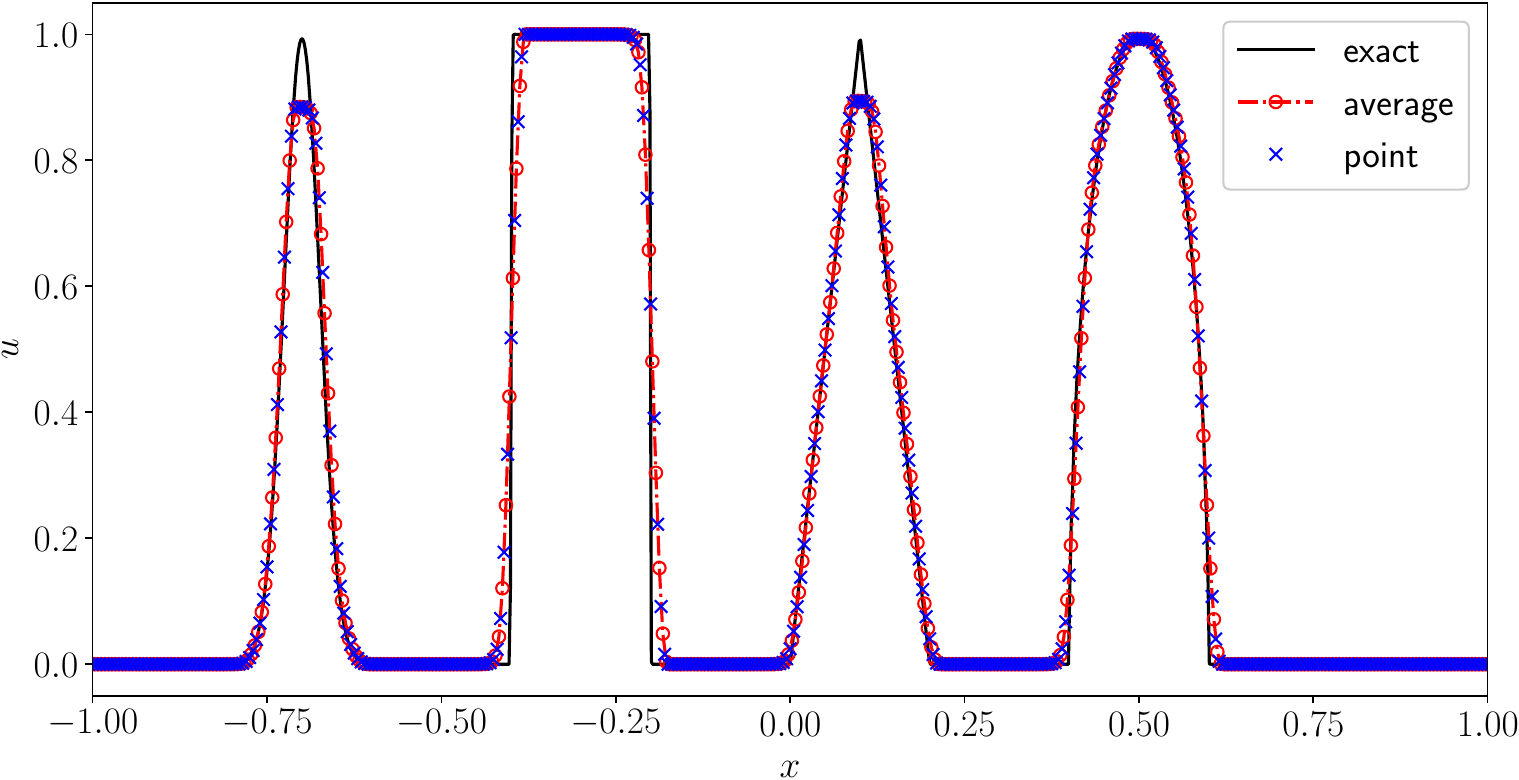}
	\end{subfigure}
	\caption{\Cref{ex:1d_advection_discontinuity}, advection.
		The results are obtained without any limiting (upper left),
		with power law reconstruction (upper right),
		with BP limitings imposing global MP for the cell average and point value (lower left),
		with BP limitings imposing local MP for the cell average and point value (lower right).}
	\label{fig:1d_advection_limiting}
\end{figure}

\begin{table}[htbp]
	\footnotesize\centering
	\begin{tabular}{cr}
		\hline\hline
		none & $[-\num{5.9e-02}, 1+\num{5.9e-02}]$\quad{\color{red}\xmark} \\ \hline
		PLR & $[-\num{2.7e-03}, 1+\num{2.6e-03}]$\quad{\color{red}\xmark} \\
		global MP for average & $[-\num{1.7e-03}, 1+\num{1.7e-03}]$\quad{\color{red}\xmark} \\
		local MP for average & $[-\num{1.3e-03}, 1+\num{1.3e-03}]$\quad{\color{red}\xmark} \\
		global MP for point & $[-\num{3.0e-04}, 1+\num{2.6e-04}]$\quad{\color{red}\xmark} \\
		local MP for point & $[-\num{4.6e-02}, 1+\num{4.6e-02}]$\quad{\color{red}\xmark} \\
  PLR + global MP for average & $[\num{-9.8e-06}, 1+\num{2.7e-06}]$\quad{\color{red}\xmark}  \\
  PLR + local MP for average & $[\num{-1.4e-05}, 1+\num{1.9e-05}]$\quad{\color{red}\xmark}  \\
  \hline
		global MP for average + global MP for point & $[\num{0.0}, 1.0]$\quad{\color{blue}\cmark}\\
		local MP for average + global MP for point & $[\num{0.0}, 1-\num{9.4e-13}]$\quad{\color{blue}\cmark} \\
		PLR + global MP for average + global MP for point & $[\num{0.0}, 1-\num{1.1e-16}]$\quad{\color{blue}\cmark} \\
		PLR + local MP for average + global MP for point & $[\num{0.0}, 1-\num{7.3e-14}]$\quad{\color{blue}\cmark} \\
		
		global MP for average + local MP for point & $[\num{0.0}, 1-\num{9.9e-15}]$\quad{\color{blue}\cmark}\\
		local MP for average + local MP for point & $[\num{0.0}, 1-\num{1.8e-12}]$\quad{\color{blue}\cmark} \\
		PLR + global MP for average + local MP for point & $[\num{0.0}, 1-\num{3.9e-15}]$\quad{\color{blue}\cmark} \\
		PLR + local MP for average + local MP for points & $[\num{0.0}, 1-\num{2.4e-13}]$\quad{\color{blue}\cmark} \\
		\hline\hline
	\end{tabular}
	\caption{\Cref{ex:1d_advection_discontinuity}, advection.
	The ranges of the numerical solutions (including both the cell averages and the point values) obtained with different limitings after one period.
	``PLR'' denotes the power law reconstruction.}
	\label{tab:1d_advection_bounds}
\end{table}

\begin{example}[Self-steepening shock]\label{ex:1d_burgers}\rm
	Consider the 1D Burgers' equation
$u_t + \left(\frac12 u^2\right)_x = 0$
	on the domain $[-1,1]$ with periodic boundary conditions.
	This test is solved until $T=0.5$ with the initial condition as a square wave
	\begin{equation*}
		u_0(x) = \begin{cases}
			2, &\text{if}~~ \abs{x} < 0.2, \\
			-1, &\text{otherwise}. \\
		\end{cases}
	\end{equation*}
\end{example}

\Cref{fig:1d_burgers_shock_js,fig:1d_burgers_shock_fvs} plot the cell averages and point values based on different point value updates with $200$ cells,
as well as the reference solution.
The spike generation has been observed in \cite{Helzel_2019_New_JoSC},
and the reason is also discussed in \Cref{sec:1d_fvs}.
Such spike generation cannot be eliminated by using the power law reconstruction,
nor do both BP limitings help to eliminate artefacts,
as can be seen from \cref{fig:1d_burgers_shock_js}.
The numerical solutions based on the LLF or SW FVS are shown in \cref{fig:1d_burgers_shock_fvs}, in which no spike appears.
There are some oscillations near the discontinuity without limitings,
and the numerical solutions agree well with the reference solution when the limitings are activated.

\begin{figure}[htbp]
	\centering
	\begin{subfigure}[b]{0.28\textwidth}
		\centering
		\includegraphics[width=1.0\linewidth]{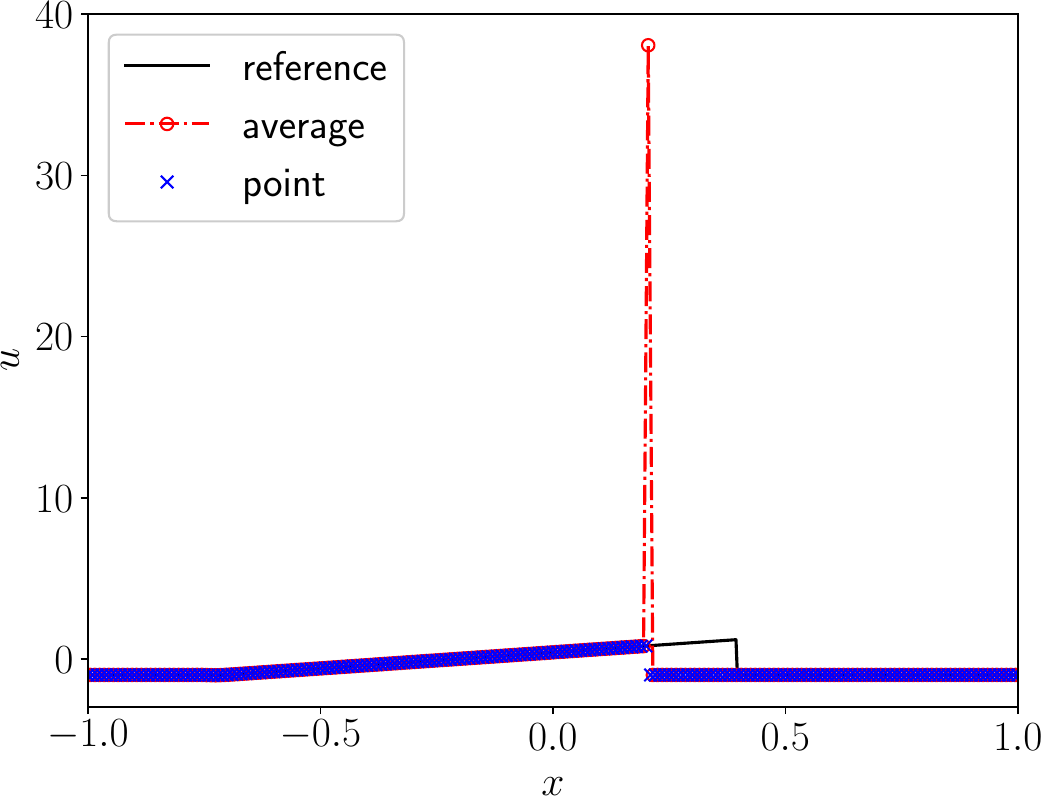}
	\end{subfigure}
	\begin{subfigure}[b]{0.28\textwidth}
		\centering
		\includegraphics[width=1.0\linewidth]{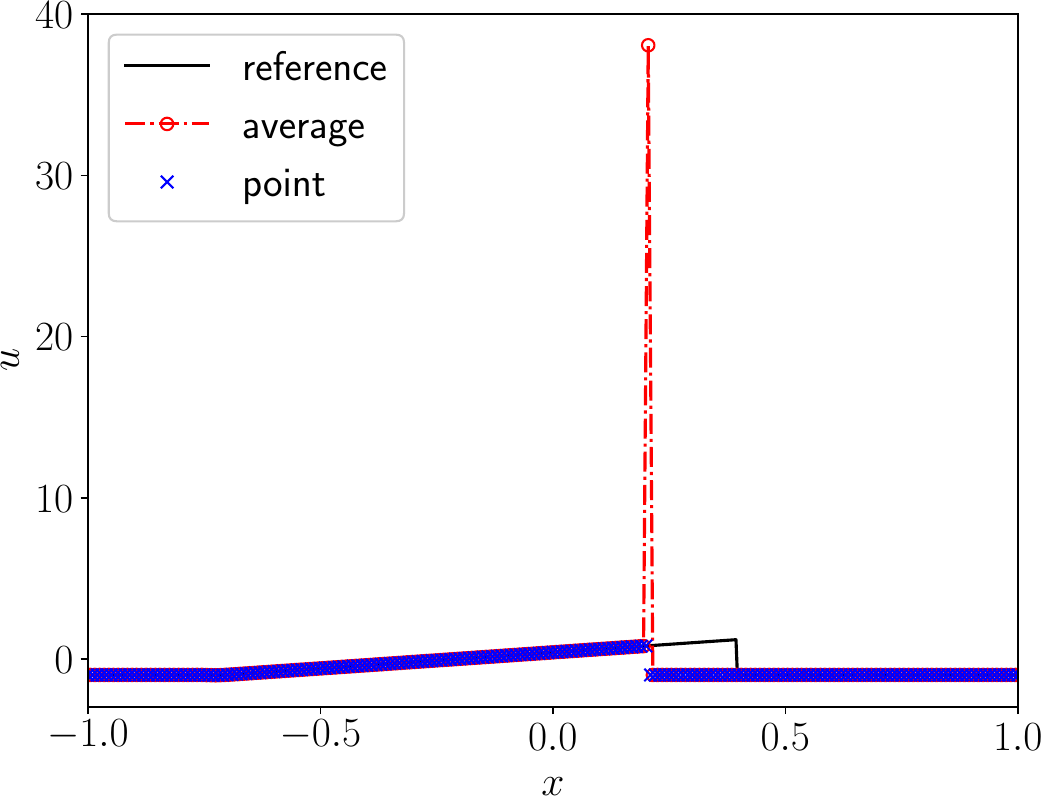}
	\end{subfigure}
	\begin{subfigure}[b]{0.29\textwidth}
		\centering
		\includegraphics[width=1.0\linewidth]{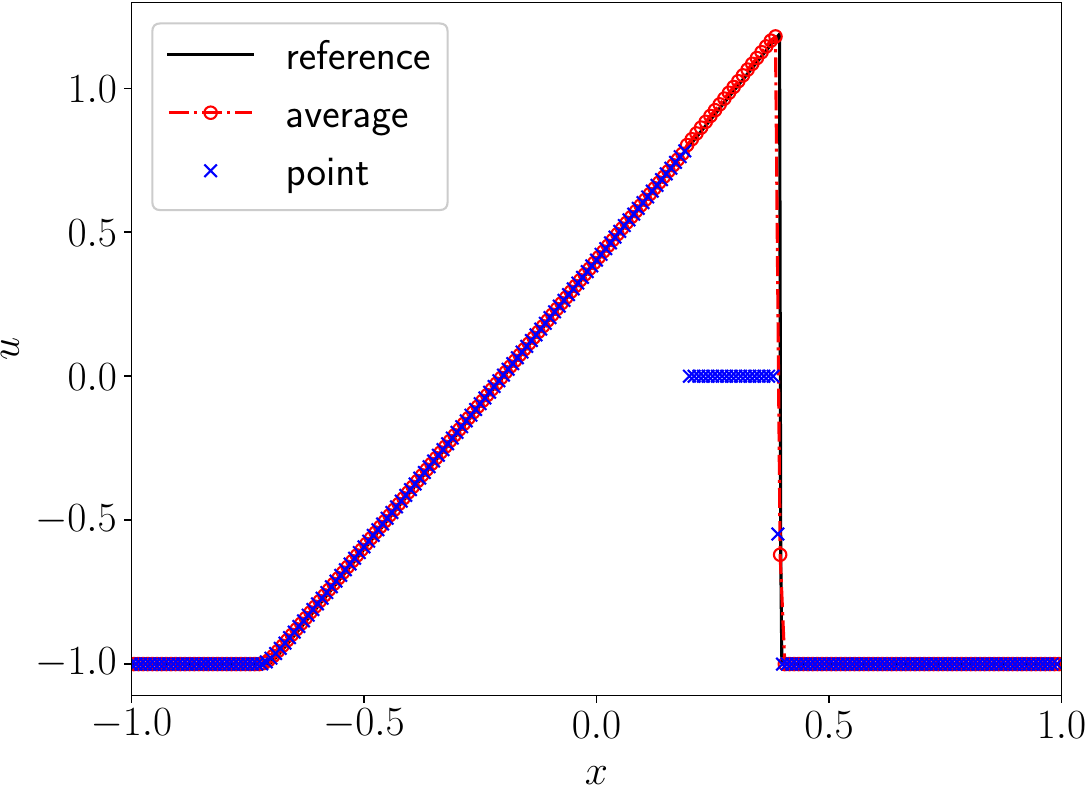}
	\end{subfigure}
	\caption{\Cref{ex:1d_burgers}, self-steepening shock for the Burgers' equation.
	The numerical solutions are based on the JS.
	From left to right: without limiting, with the power law reconstruction,
	with the BP limitings imposing local MP for the cell average and point value update, respectively.}
	\label{fig:1d_burgers_shock_js}
\end{figure}

\begin{figure}[htbp]
	\centering
	\begin{subfigure}[b]{0.24\textwidth}
		\centering
		\includegraphics[width=0.98\linewidth]{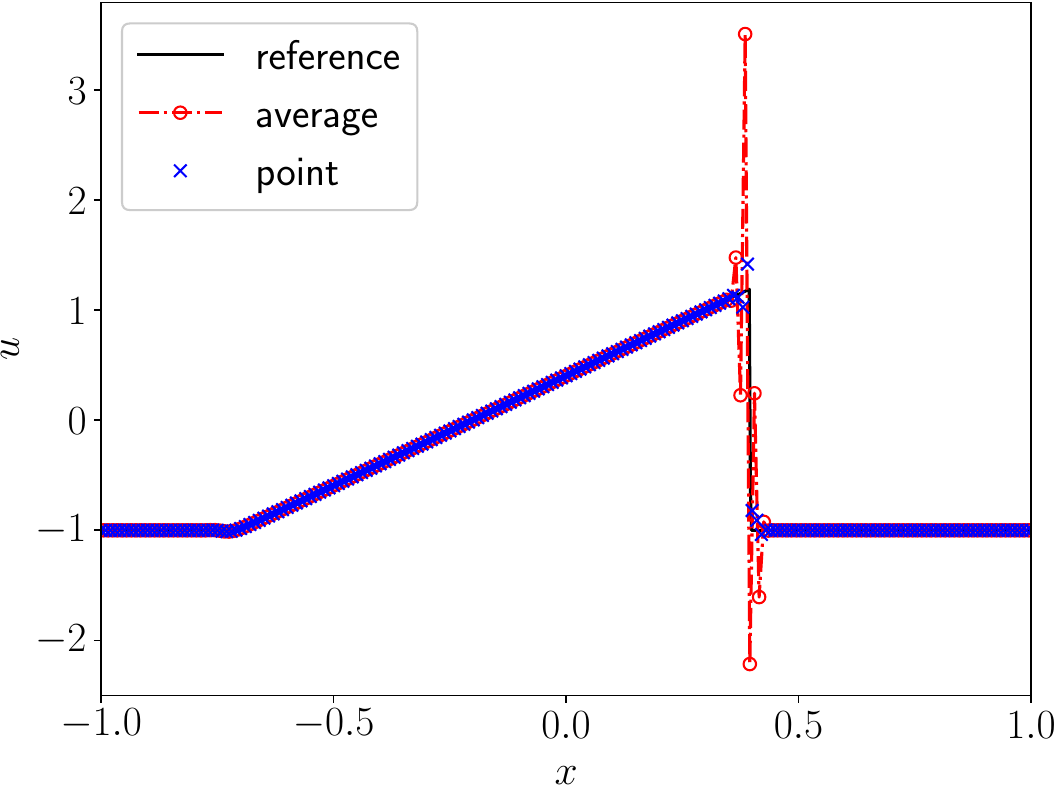}
	\end{subfigure}
	\begin{subfigure}[b]{0.24\textwidth}
		\centering
		\includegraphics[width=1.0\linewidth]{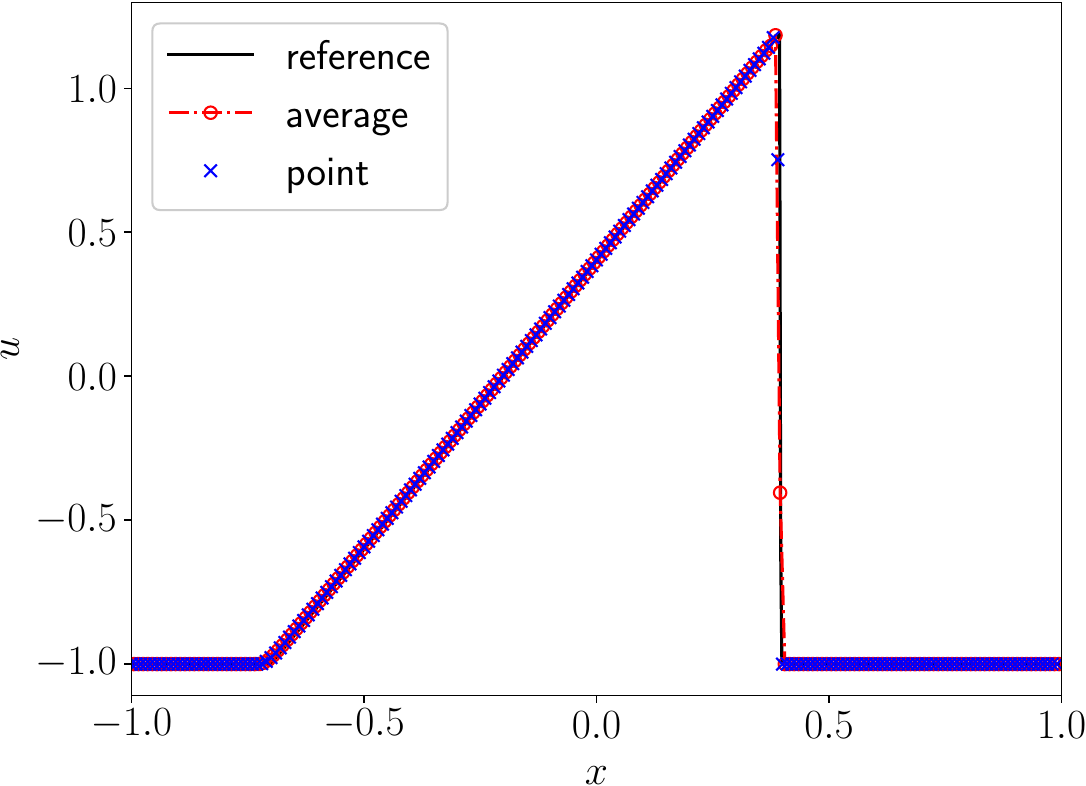}
	\end{subfigure}
	\begin{subfigure}[b]{0.24\textwidth}
		\centering
		\includegraphics[width=0.98\linewidth]{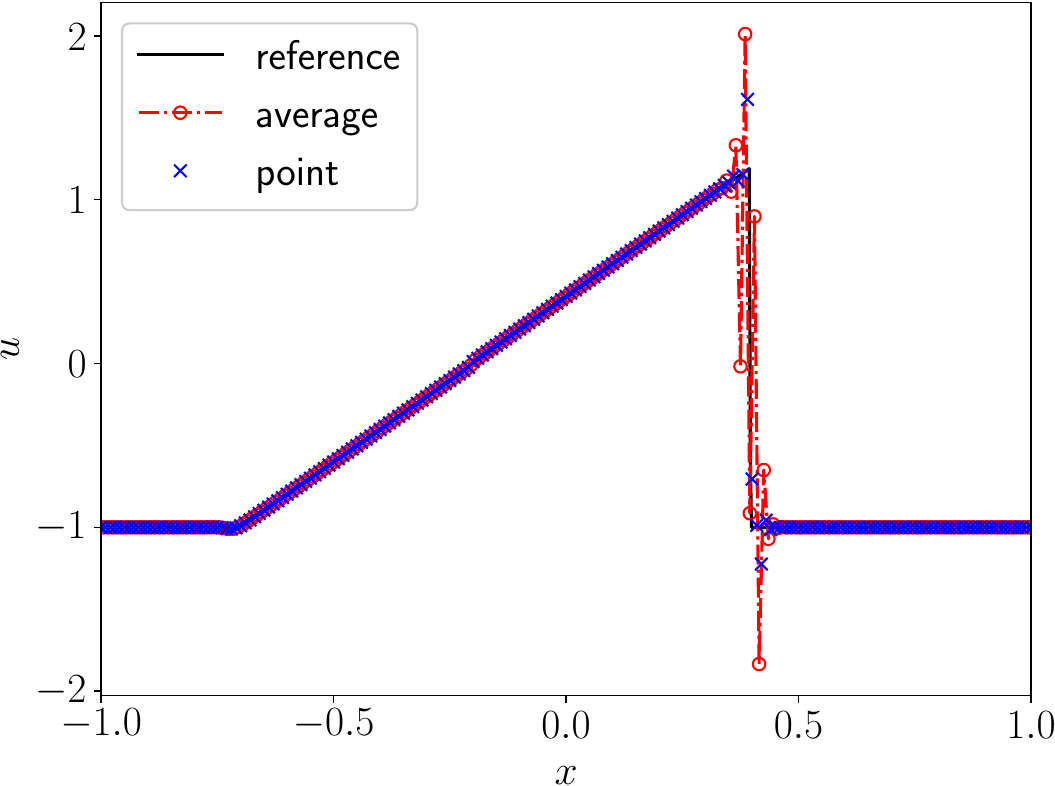}
	\end{subfigure}
	\begin{subfigure}[b]{0.24\textwidth}
		\centering
		\includegraphics[width=1.0\linewidth]{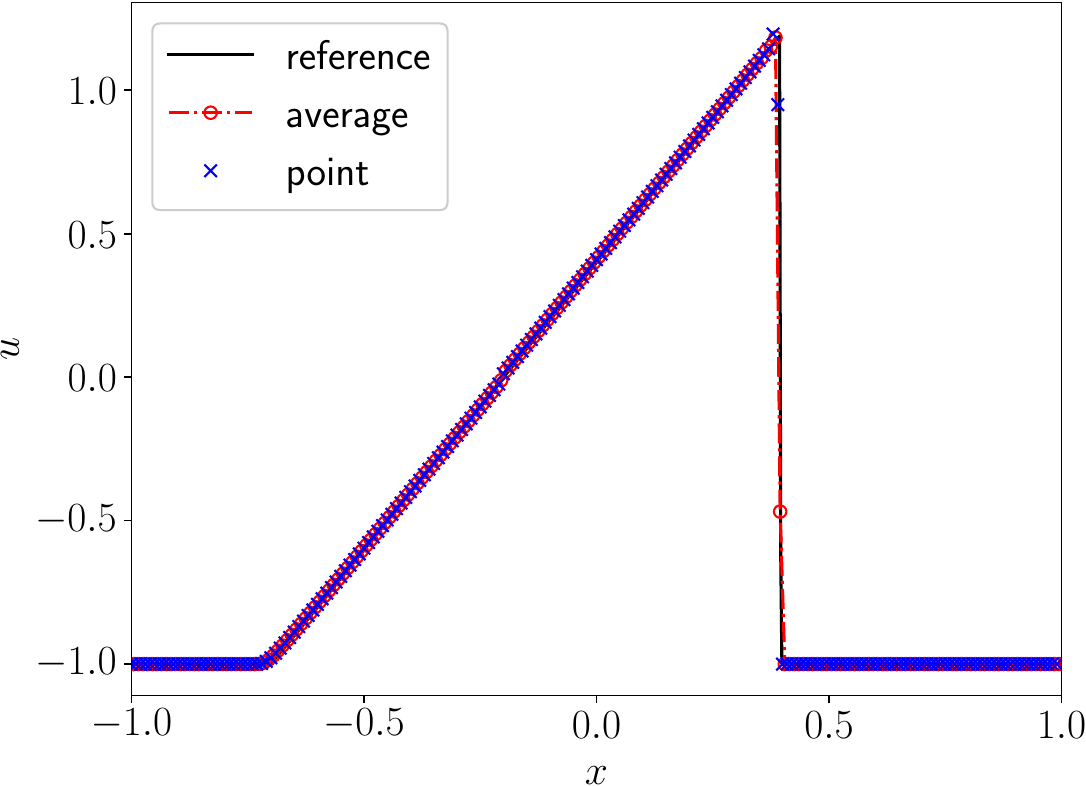}
	\end{subfigure}
	\caption{\Cref{ex:1d_burgers}, self-steepening shock for the Burgers' equation.
	From left to right:
	the LLF FVS without limiting, the LLF FVS with limitings, 
	the SW FVS without limiting, the SW FVS with limitings.
	The limitings consider the local MP for the cell average and point value updates, respectively.}
	\label{fig:1d_burgers_shock_fvs}
\end{figure}

\subsection{The compressible Euler equations}
This section shows some challenging tests, which require the BP property of the numerical methods in order to prevent simulations from crashing at some time.
The adiabatic index is $\gamma = 1.4$ unless otherwise stated.
Note that the BP limiting naturally reduces some oscillations.


\begin{example}[1D accuracy test for the Euler equations]\label{ex:1d_accuracy}\rm
	This test is used to examine the accuracy of using different point value updates, following the setup in \cite{Abgrall_2023_Combination_CoAMaC}.
	The domain is $[-1,1]$ with periodic boundary conditions.
	The adiabatic index is chosen as $\gamma=3$ so that the characteristic equations of two Riemann invariants $w=u\pm a$ are $w_t+ww_x=0$.
	The initial condition is $\rho_0(x) = 1+\zeta \sin(\pi x), v_0 = 0, p_0 = \rho_0^\gamma$
	and $\zeta\in(0, 1)$ controls the range of the density.
	The exact solution can be obtained by the method of characteristics, given by $\rho(x,t)=\frac12\left(\rho_0(x_1) + \rho_0(x_2)\right), v(x,t)=\sqrt{3}\left(\rho(x,t)-\rho_0(x_1)\right)$,
	where $x_1$ and $x_2$ are solved from the nonlinear equations $x+\sqrt{3}\rho_0(x_1)t-x_1=0$, $x-\sqrt{3}\rho_0(x_2)t-x_2=0$.
	The problem is solved until $T = 0.1$ with $\zeta=1-10^{-7}$.

As $\zeta=1-10^{-7}$, the minimum density and pressure are $10^{-7}$ and $10^{-21}$ respectively, so that the BP limitings are necessary to run this test case.
The maximal CFL numbers allowing stable simulations are obtained experimentally, which are around $0.47, 0.43, 0.32, 0.18$ for the JS, LLF, SW, VH FVS, respectively,
thus we run the test with the same CFL number as $0.18$.
\Cref{fig:1d_euler_accuracy} shows the errors and corresponding convergence rates for the conservative variables in the $\ell^1$ norm.
It is seen that the JS and all the FVS except for the SW FVS achieve the designed third-order accuracy, showing that our BP limitings do not affect the high-order accuracy.
To examine the reason why the scheme based on the SW FVS is only second-order accurate,
\Cref{fig:1d_euler_accuracy_test1} plots the density and velocity profiles obtained by the SW FVS with $80$ cells.
One can observe some defects in the density when the velocity is zero,
similar to the ``sonic point glitch'' in the literature \cite{Tang_2005_sonic_JoCP}.
One possible reason is that the SW FVS is based on the absolute value of the eigenvalues, and the corresponding mass flux is not differentiable when the velocity is zero \cite{Leer_1982_Flux_InProceedings}.
Such an issue remains to be further explored in the future.

\begin{figure}[htbp]
	\centering
	\includegraphics[width=0.5\linewidth]{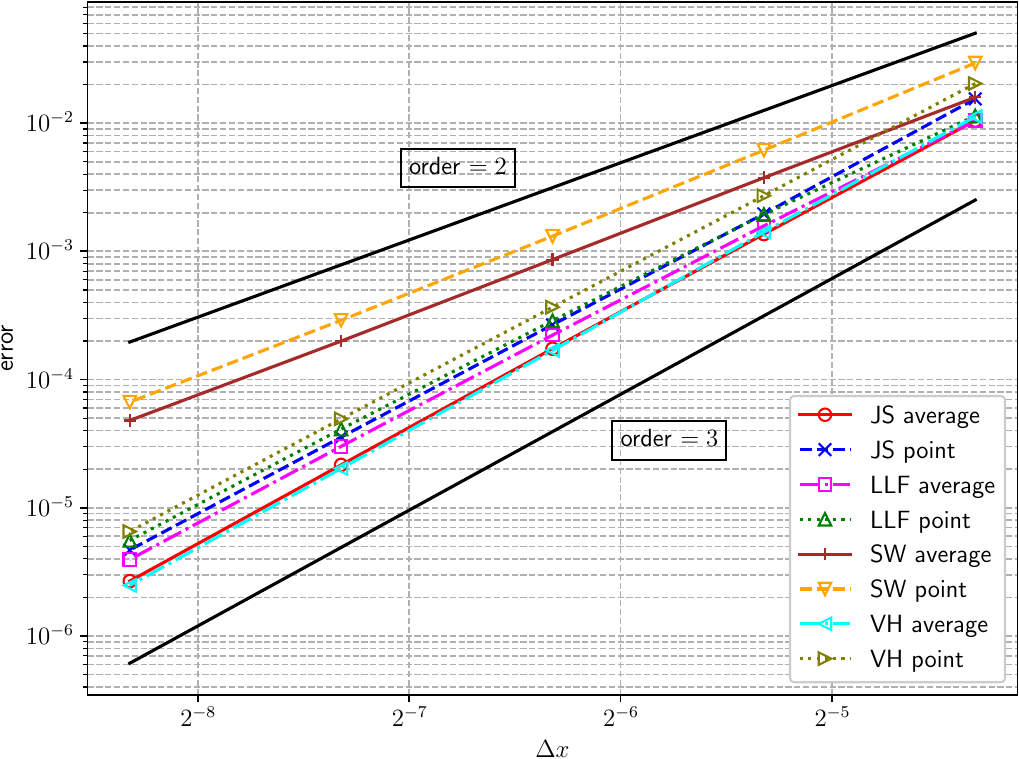}
	\caption{\Cref{ex:1d_accuracy}, the accuracy test for the 1D Euler equations.
	}
	\label{fig:1d_euler_accuracy}
\end{figure}

\begin{figure}[htbp]
	\centering
	\begin{subfigure}[b]{0.35\textwidth}
		\centering
		\includegraphics[width=1.0\linewidth]{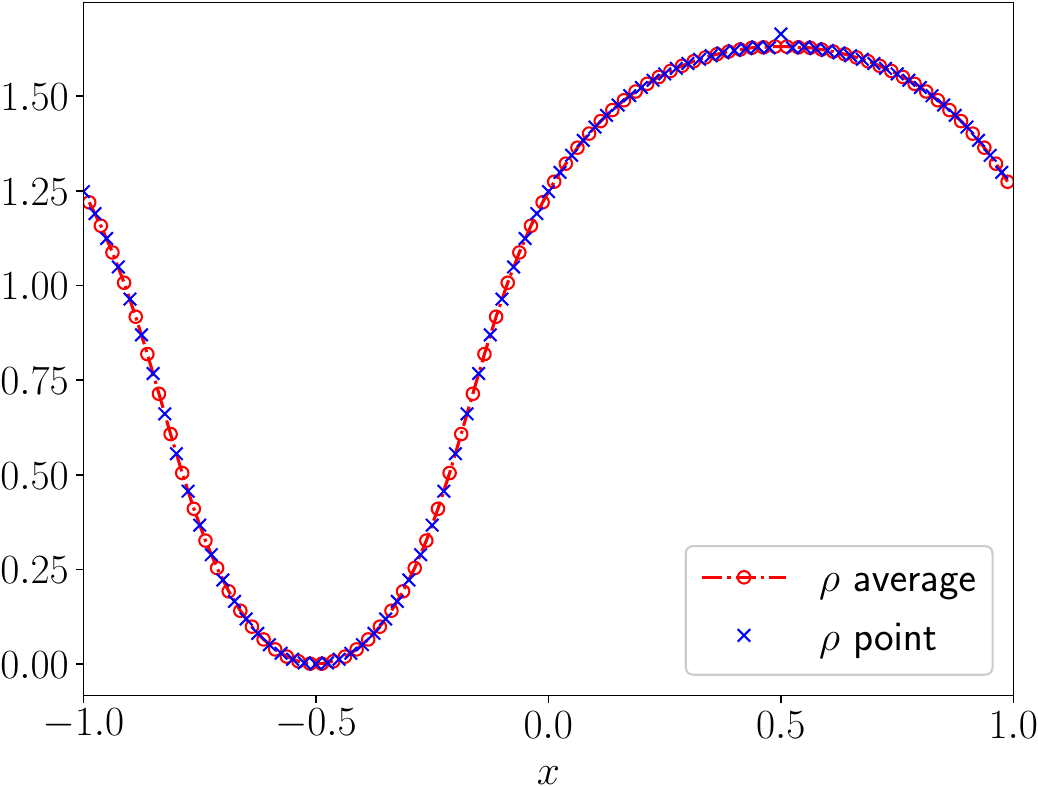}
	\end{subfigure}
	~
	\begin{subfigure}[b]{0.35\textwidth}
		\centering
		\includegraphics[width=1.0\linewidth]{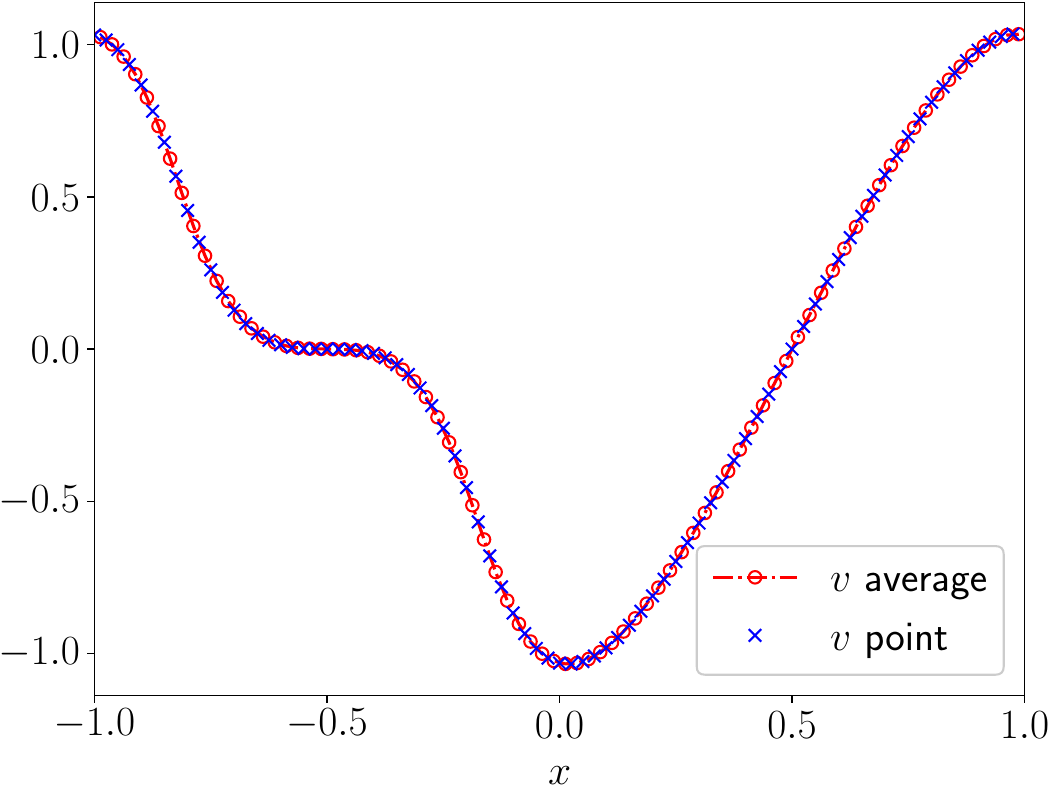}
	\end{subfigure}
	\caption{\Cref{ex:1d_accuracy}, the density (left) and velocity (right) are obtained with the SW FVS and $80$ cells for the 1D Euler equations.
	}
	\label{fig:1d_euler_accuracy_test1}
\end{figure}
\end{example}


\begin{example}[Double rarefaction problem]\label{ex:1d_double_rarefaction}\rm
	The exact solution to this problem contains a vacuum,
	so that it is often used to verify the BP property of numerical methods.
	The test is solved on a domain $[0,1]$ until $T=0.3$ with the initial data	\begin{equation*}
		(\rho, v, p) = \begin{cases}
			(7, -1, 0.2), &\text{if}~~ x<0.5,\\
			(7, 1, 0.2), &\text{otherwise}.\\
		\end{cases}
	\end{equation*}

In this test, the AF method based on any kind of point value update mentioned in this paper gives negative density or pressure without the BP limitings.
\Cref{fig:1d_double_rarefaction_rho} shows the density computed with $400$ cells and the BP limitings for the cell average and point value updates.
The power law reconstruction is not used in this test,
and the CFL number is $0.4$ for all kinds of point value updates,
except for $0.1$ for the VH FVS.
One observes that the BP AF method gets good performance for this example.

\begin{figure}[htbp]
	\centering
	\begin{subfigure}[b]{0.24\textwidth}
		\centering
		\includegraphics[width=\linewidth]{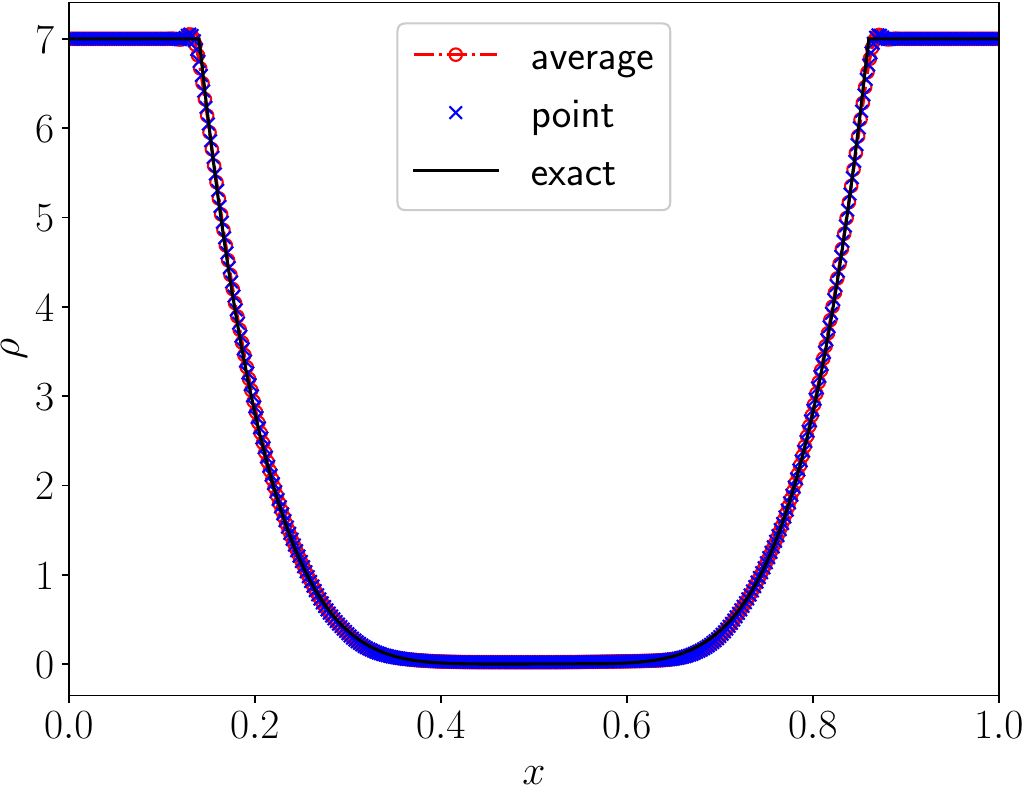}
	\end{subfigure}
	\begin{subfigure}[b]{0.24\textwidth}
		\centering
		\includegraphics[width=\linewidth]{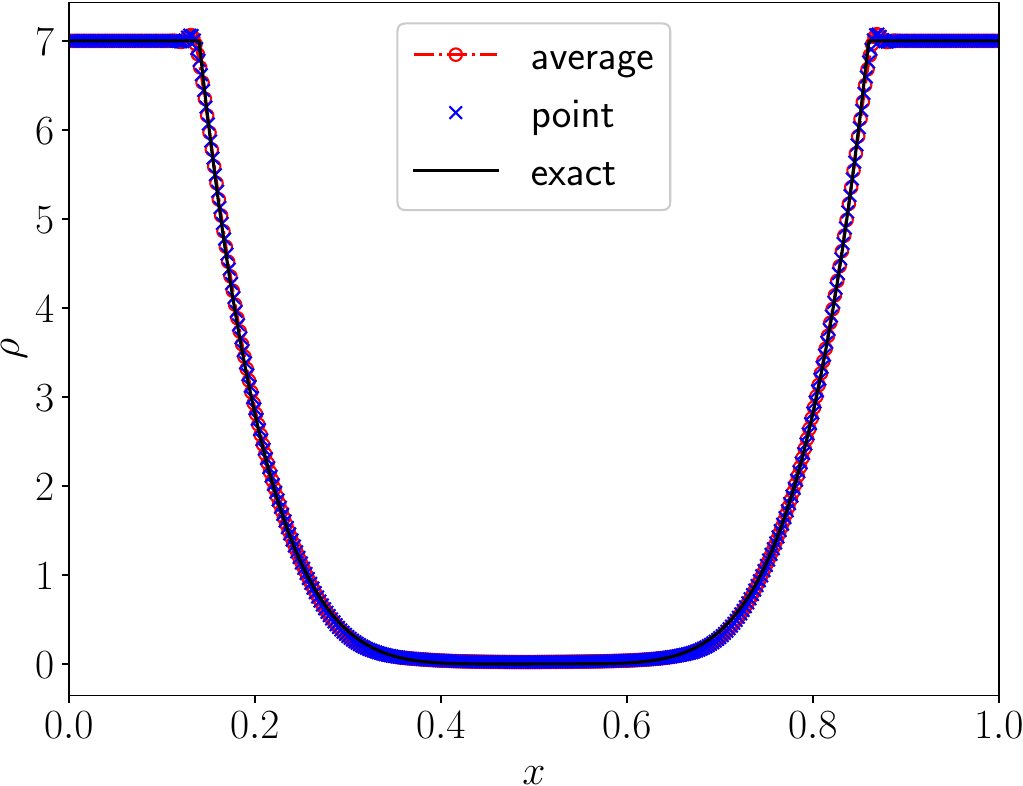}
	\end{subfigure}
	\begin{subfigure}[b]{0.24\textwidth}
		\centering
		\includegraphics[width=\linewidth]{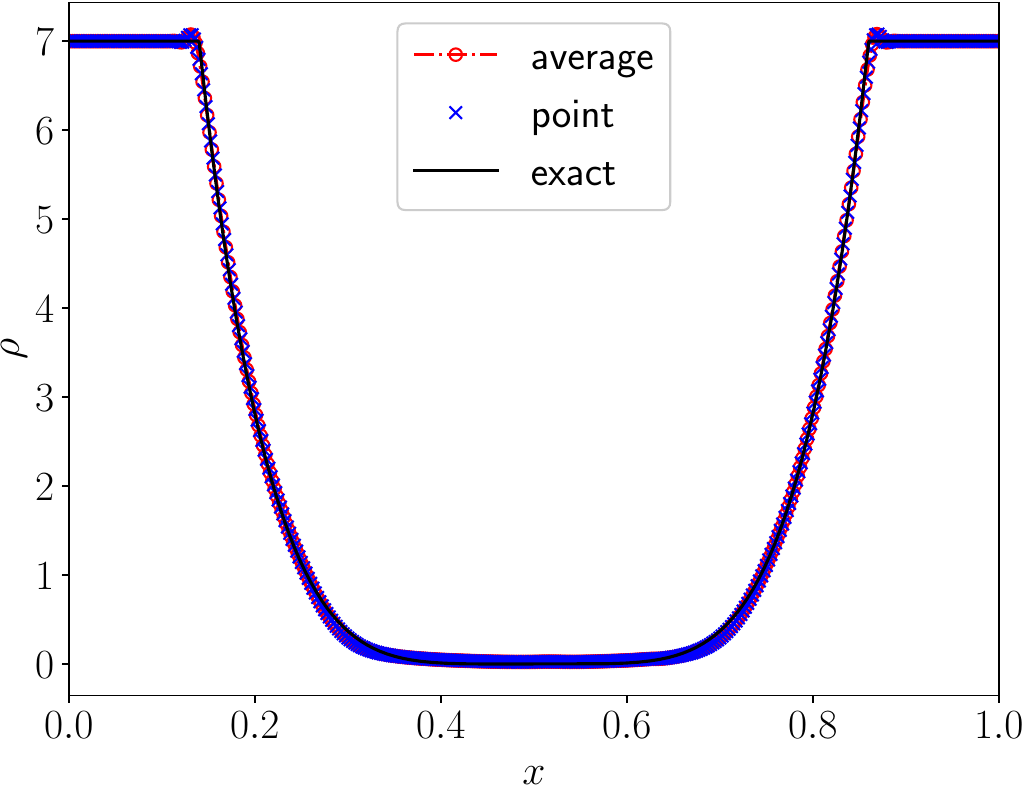}
	\end{subfigure}
	\begin{subfigure}[b]{0.24\textwidth}
		\centering
		\includegraphics[width=\linewidth]{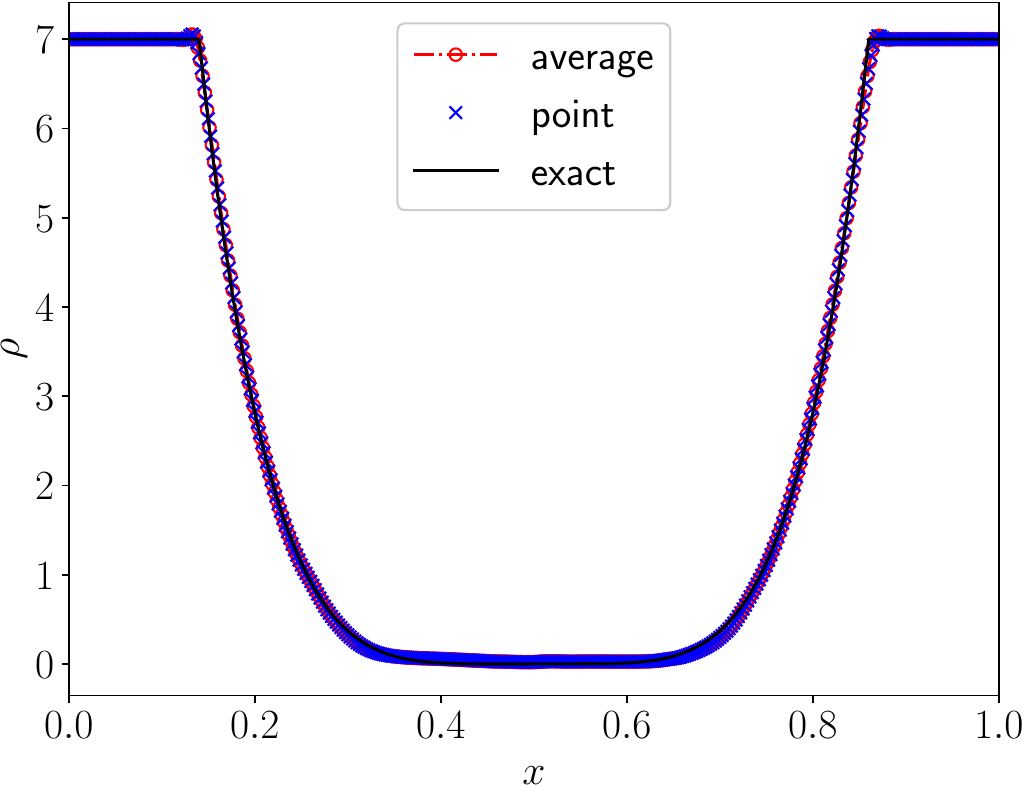}
	\end{subfigure}
	\caption{\Cref{ex:1d_double_rarefaction}, double rarefaction Riemann problem.
		The numerical solutions are computed with BP limitings for the cell average and point value updates on a uniform mesh of $400$ cells.
		The power law reconstruction is not used.
		From left to right: JS, LLF, SW, and VH FVS.}
	\label{fig:1d_double_rarefaction_rho}
\end{figure}
\end{example}

\begin{example}[LeBlanc shock tube]\label{ex:1d_leblanc}\rm
	This is a Riemann problem with an extremely large initial pressure ratio.
	This test is solved until $T=5\times 10^{-6}$ on a domain $[0,1]$ with the initial data
	\begin{equation*}
		(\rho, v, p) = \begin{cases}
			(2, 0, 10^9), &\text{if}~~ x<0.5,\\
			(10^{-3}, 0, 1), &\text{otherwise}.\\
		\end{cases}
	\end{equation*}

Without the BP limitings, the simulation will stop due to negative density or pressure.
\Cref{fig:1d_leblanc_rho} shows the density computed on a uniform mesh of $400$ and $6000$ cells with the BP limitings for the cell average and point value updates.
The CFL number is $0.4$ for the LLF and SW FVS,
and $0.15$ for the JS and VH FVS for stability when the power law reconstruction is not used.
It is seen that the numerical solutions on the coarse mesh deviate from the exact solutions,
which has also been observed in other high-order BP methods, e.g., \cite{Zhang_2010_positivity_JoCP}.
As the number of the mesh cells increases from $400$ to $6000$ (most of, if not all, the numerical methods need a small mesh size to obtain the right shock wave location, not only our AF method),
one can observe from \cref{fig:1d_leblanc_rho} that the numerical solutions converge to the exact solutions with only a few overshoots/undershoots at the contact discontinuity.
The LLF and SW FVS give better results.

To verify whether the power law reconstruction can suppress spurious oscillations and overshoots/undershoots,
we rerun the test with the CFL number $0.1$,
and the density profiles are shown in \cref{fig:1d_leblanc_fine_mesh_cfl0.1}.
It is obvious that only reducing the CFL number does not change the numerical solutions much except that the oscillations near the contact discontinuity based on the VH FVS are damped.
When the power law reconstruction is activated, the overshoots/undershoots are reduced for the JS, LLF, and SW FVS,
while the VH FVS gives worse results even with a smaller CFL number (e.g. $0.02$, not shown here), which needs further investigation.

\begin{figure}[htbp]
	\centering
	\begin{subfigure}[b]{0.24\textwidth}
		\centering
		\includegraphics[width=\linewidth]{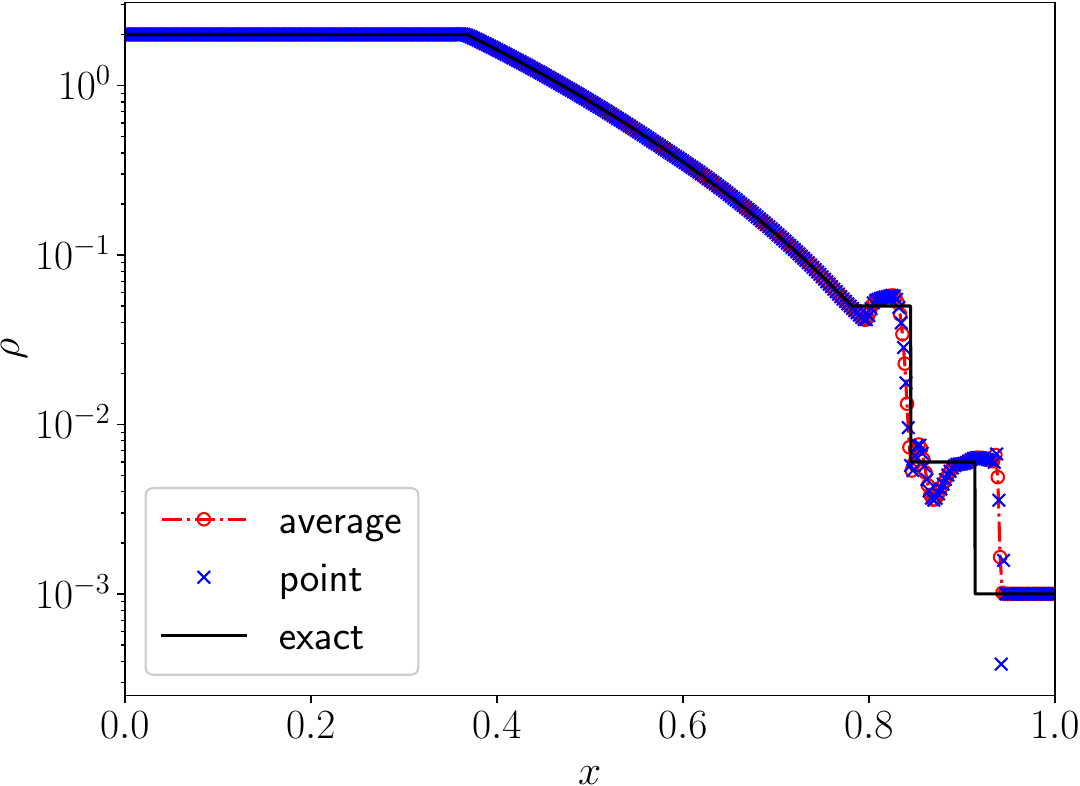}
	\end{subfigure}	
	\begin{subfigure}[b]{0.24\textwidth}
		\centering
		\includegraphics[width=\linewidth]{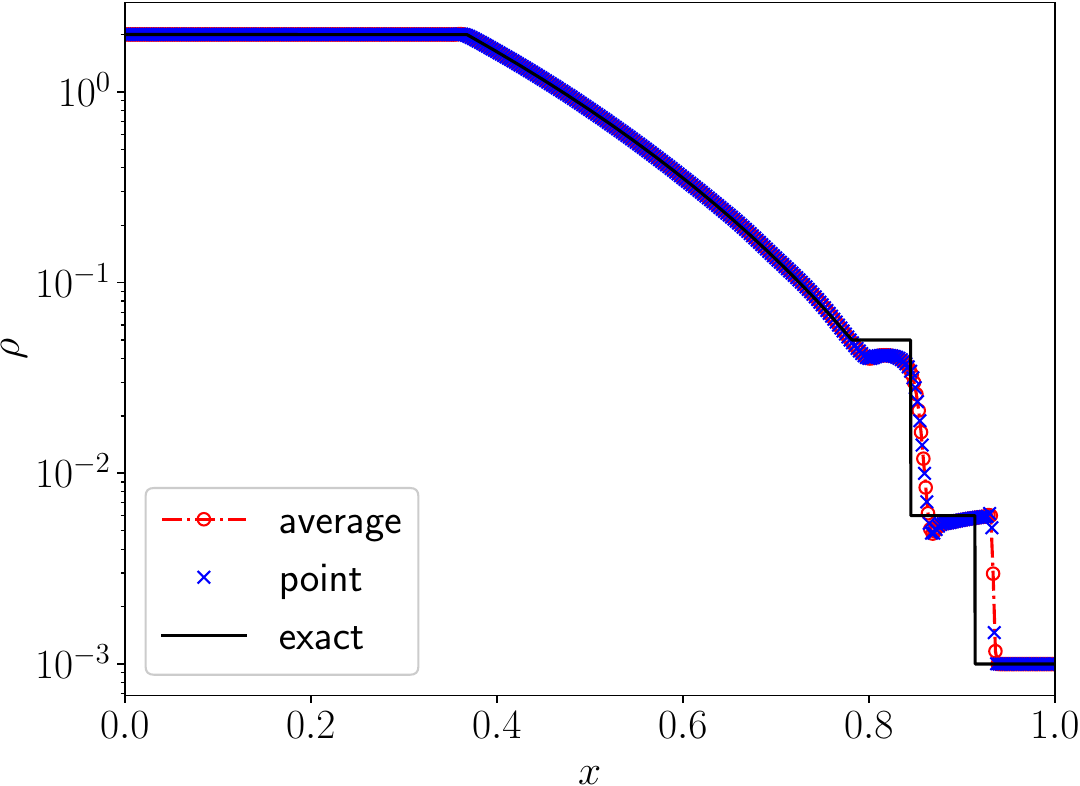}
	\end{subfigure}
	\begin{subfigure}[b]{0.24\textwidth}
		\centering
		\includegraphics[width=\linewidth]{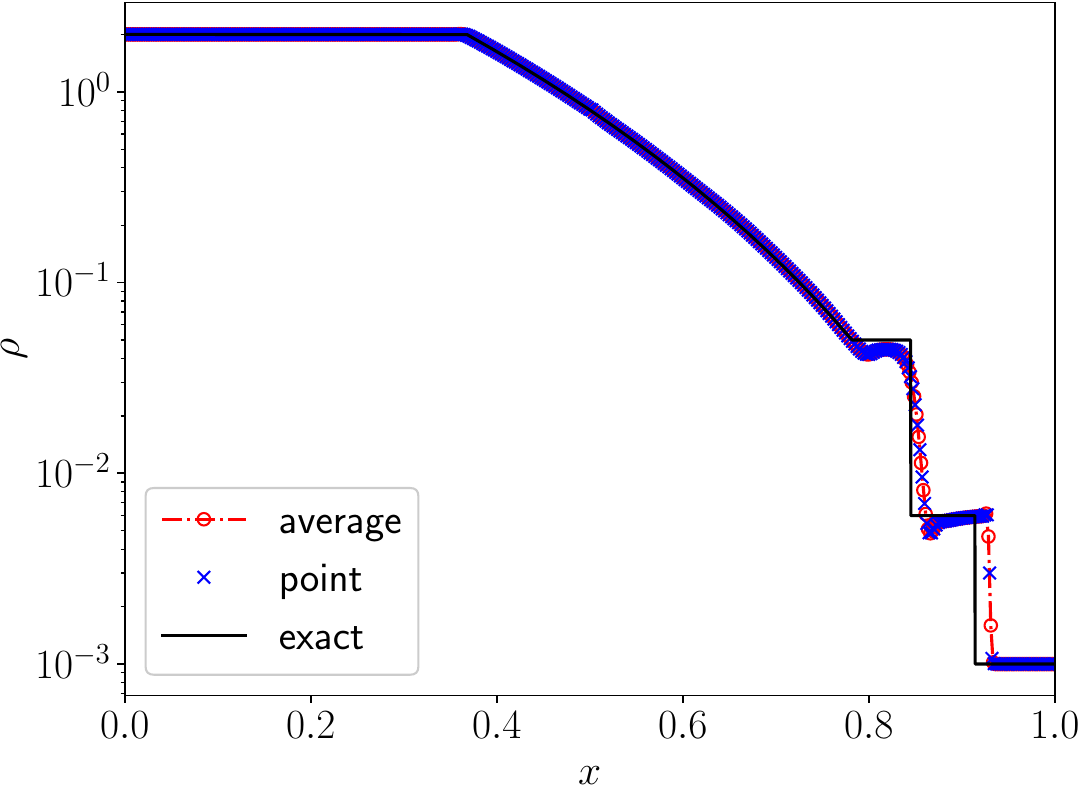}
	\end{subfigure}
	\begin{subfigure}[b]{0.24\textwidth}
		\centering
		\includegraphics[width=\linewidth]{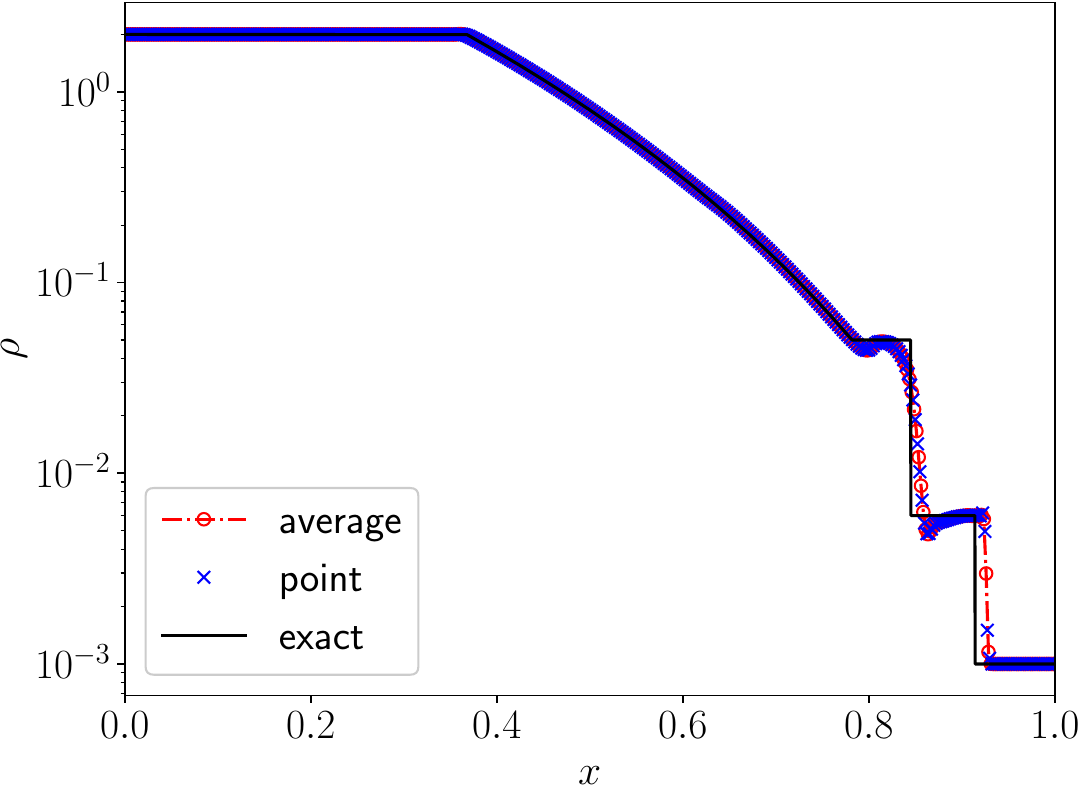}
	\end{subfigure}
	
	\begin{subfigure}[b]{0.24\textwidth}
		\centering
		\includegraphics[width=\linewidth]{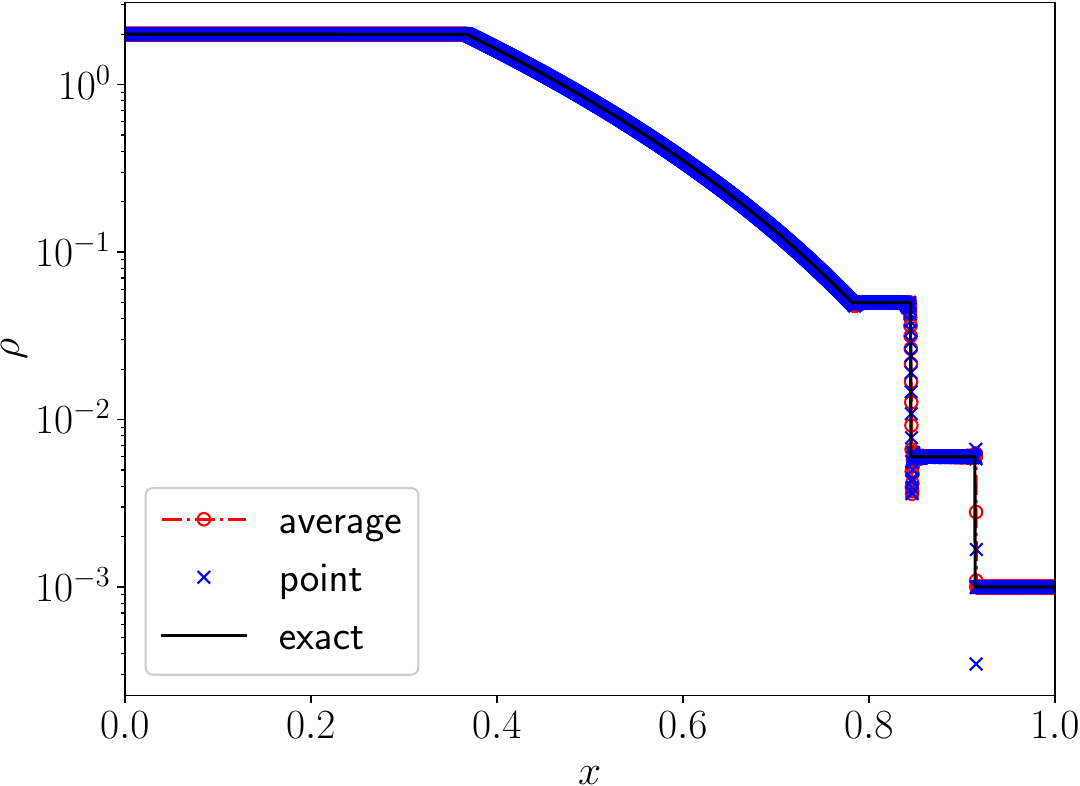}
	\end{subfigure}
	\begin{subfigure}[b]{0.24\textwidth}
		\centering
		\includegraphics[width=\linewidth]{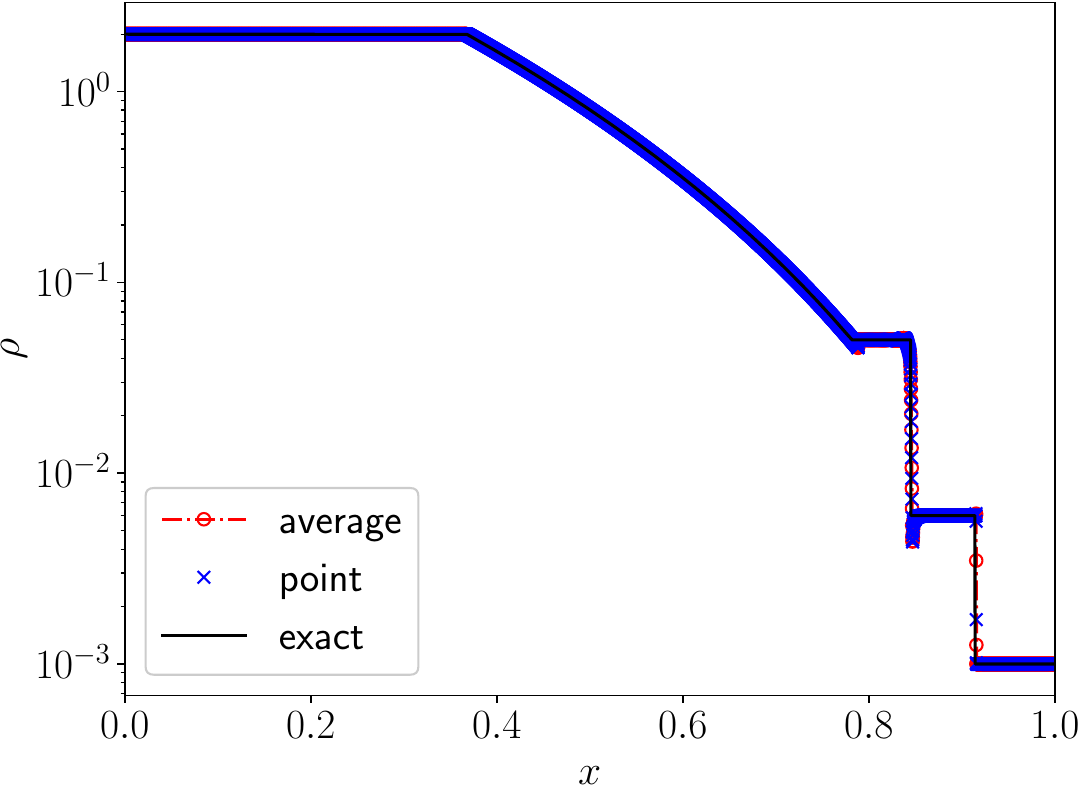}
	\end{subfigure}
	\begin{subfigure}[b]{0.24\textwidth}
		\centering
		\includegraphics[width=\linewidth]{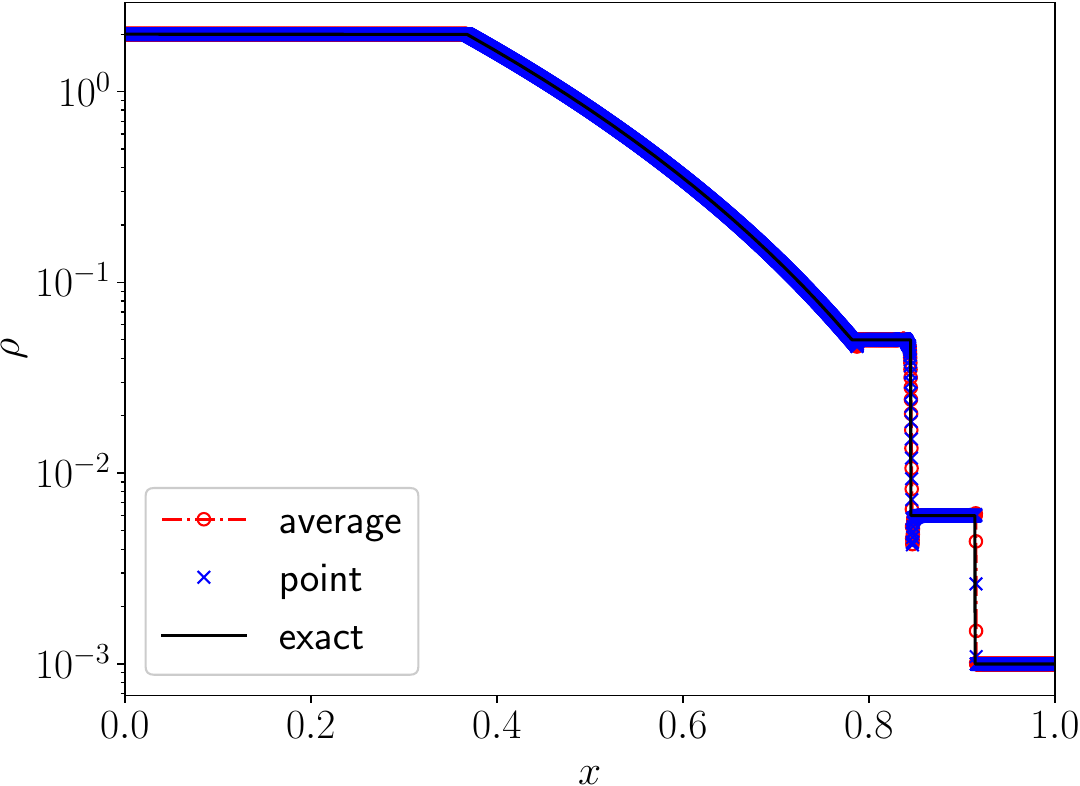}
	\end{subfigure}
	\begin{subfigure}[b]{0.24\textwidth}
		\centering
		\includegraphics[width=\linewidth]{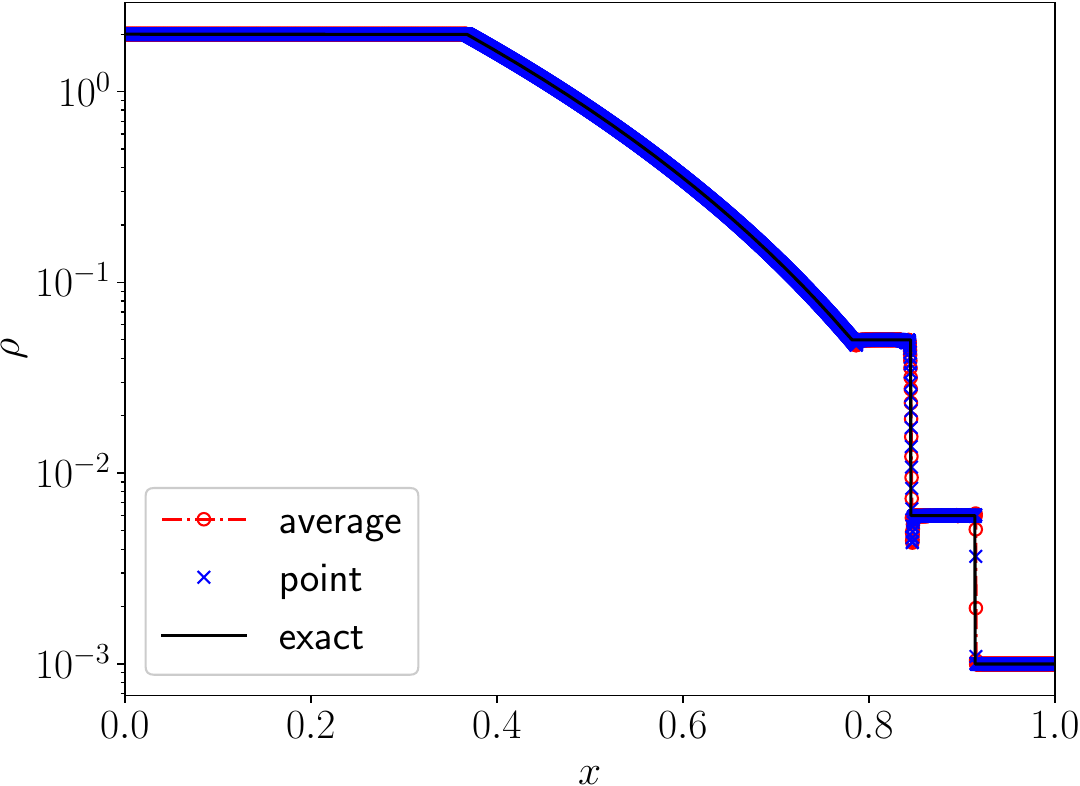}
	\end{subfigure}
	\caption{\Cref{ex:1d_leblanc}, LeBlanc Riemann problem.
		The numerical solutions are computed with the BP limitings for the cell average and point value updates on a uniform mesh of $400$ cells (top) and $6000$ cells (bottom).
		From left to right: JS, LLF, SW, and VH FVS.}
	\label{fig:1d_leblanc_rho}
\end{figure}

\begin{figure}[htbp]
	\centering
	\begin{subfigure}[b]{0.24\textwidth}
		\centering
		\includegraphics[width=\linewidth]{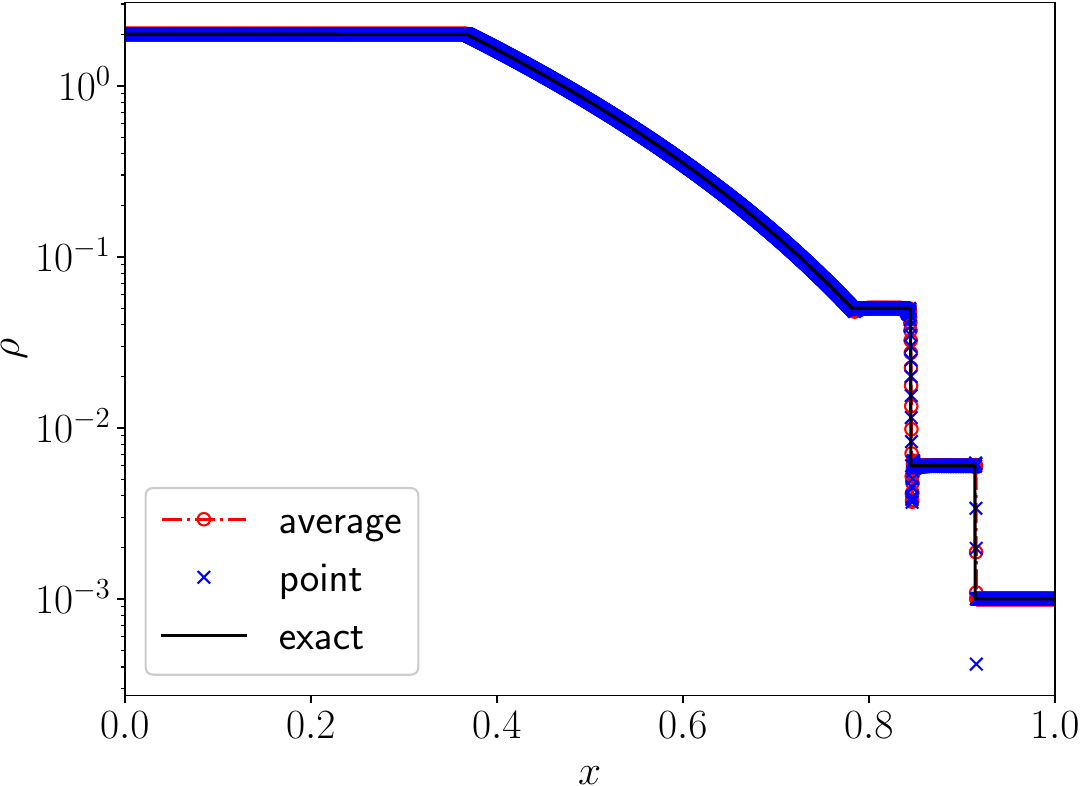}
	\end{subfigure}
	\begin{subfigure}[b]{0.24\textwidth}
		\centering
		\includegraphics[width=\linewidth]{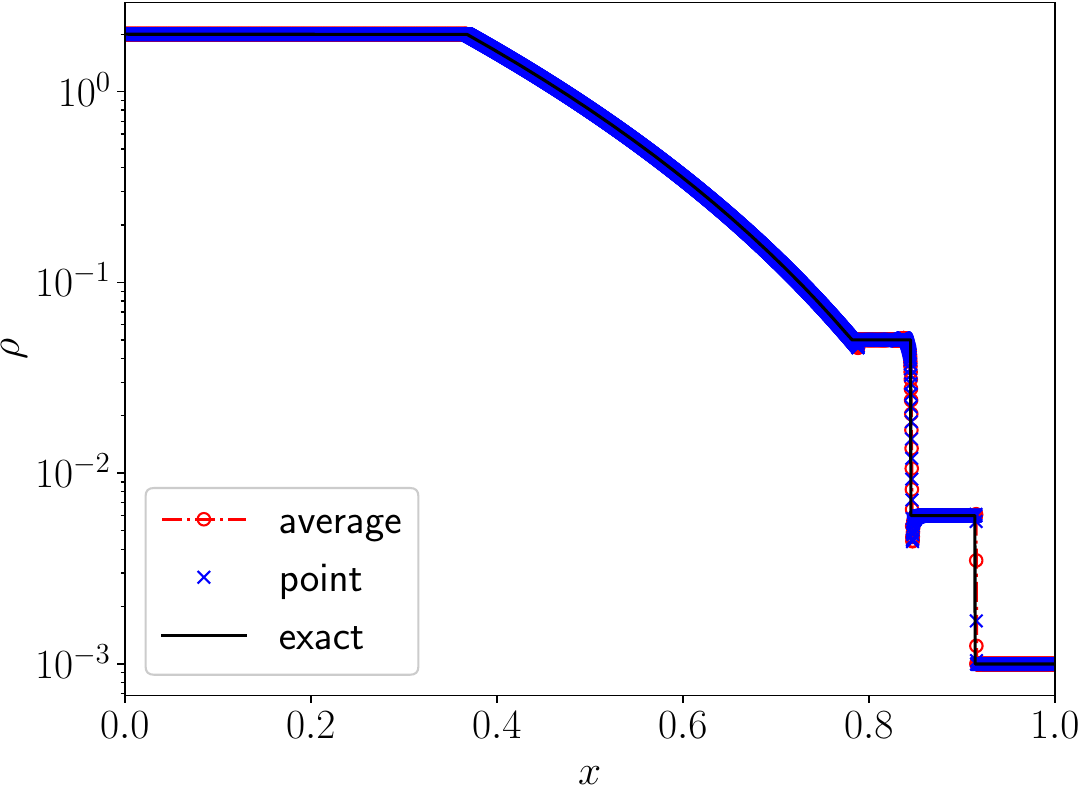}
	\end{subfigure}
	\begin{subfigure}[b]{0.24\textwidth}
		\centering
		\includegraphics[width=\linewidth]{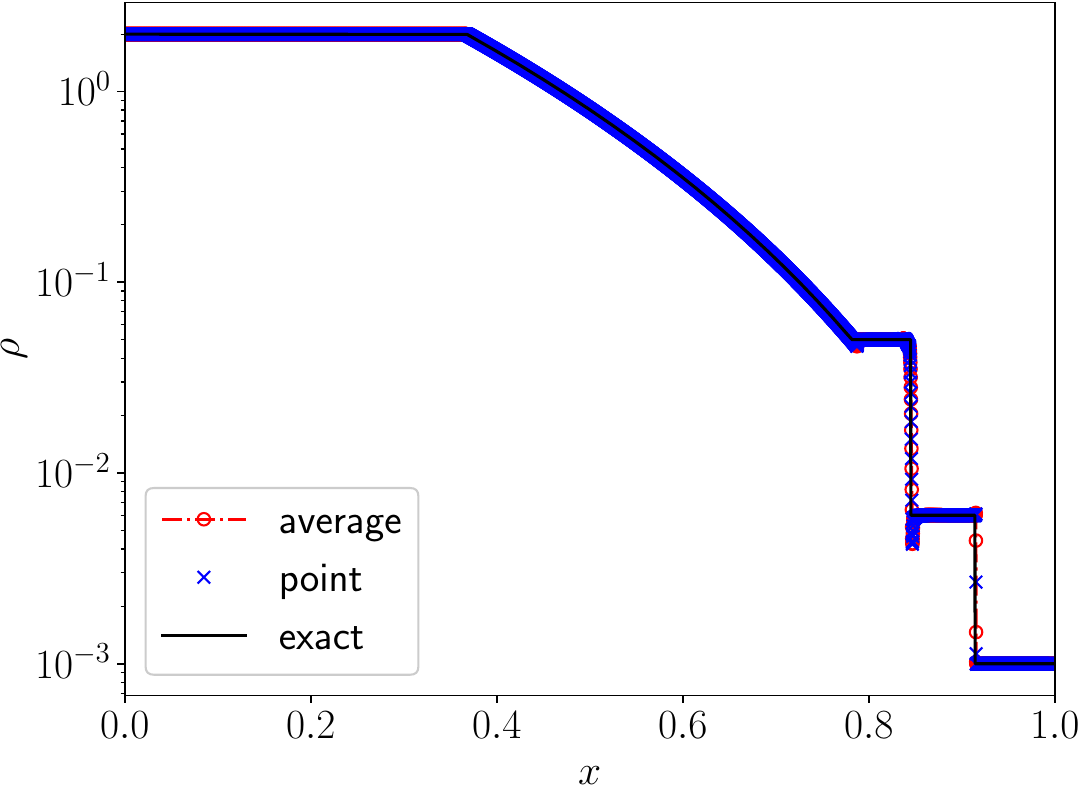}
	\end{subfigure}
	\begin{subfigure}[b]{0.24\textwidth}
		\centering
		\includegraphics[width=\linewidth]{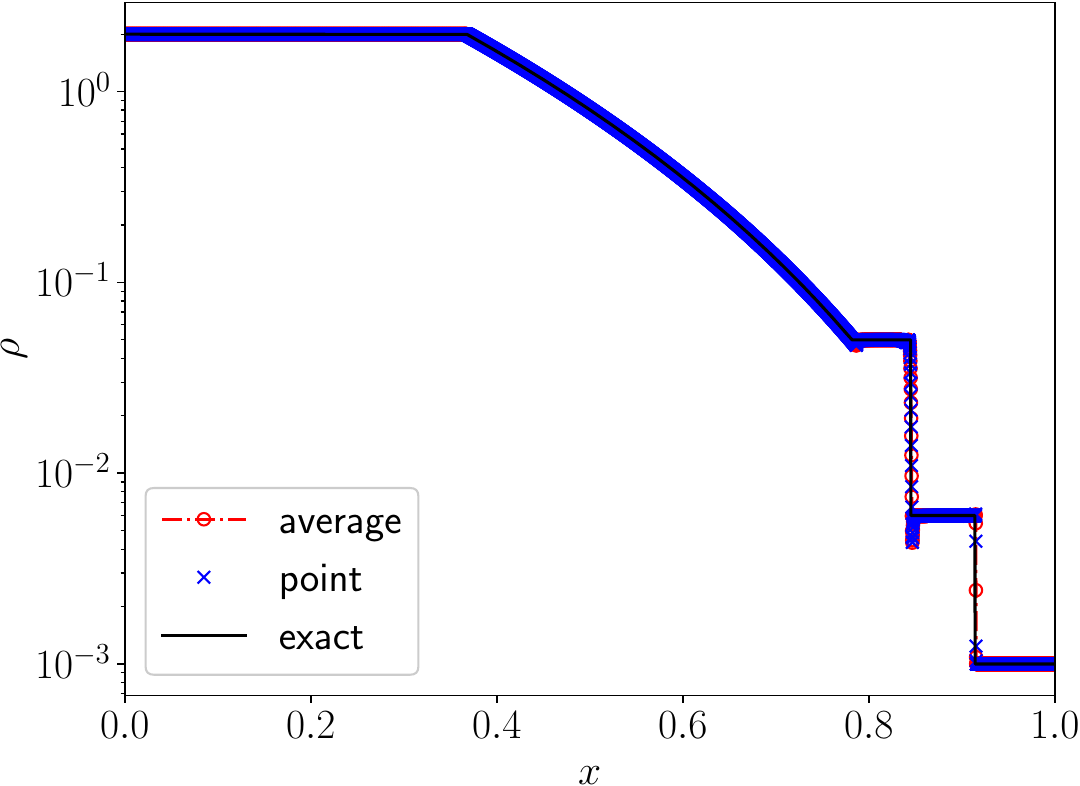}
	\end{subfigure}
	
	\begin{subfigure}[b]{0.24\textwidth}
		\centering
		\includegraphics[width=\linewidth]{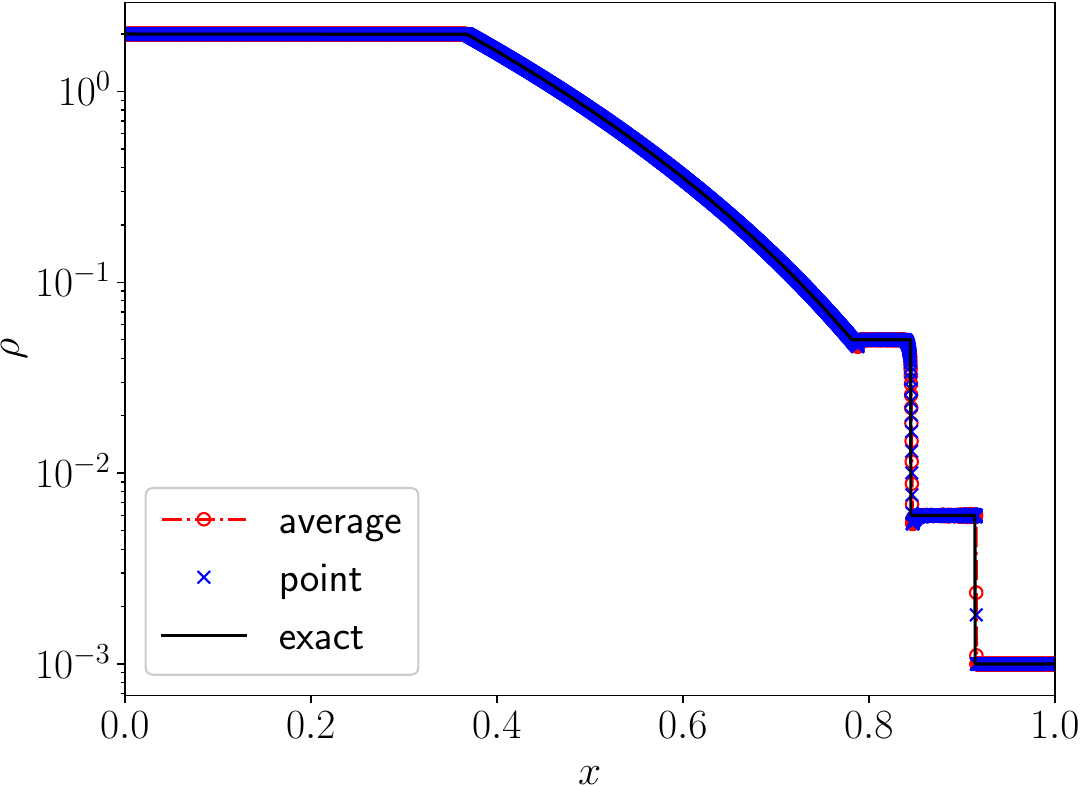}
	\end{subfigure}
	\begin{subfigure}[b]{0.24\textwidth}
		\centering
		\includegraphics[width=\linewidth]{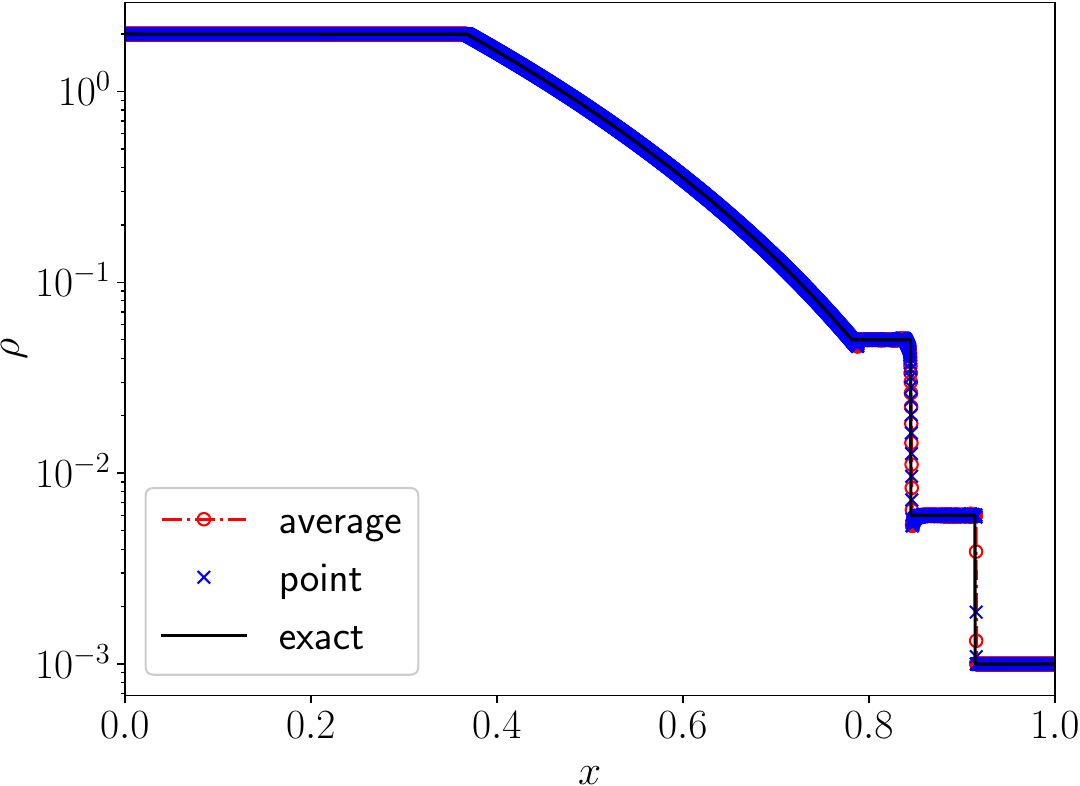}
	\end{subfigure}
	\begin{subfigure}[b]{0.24\textwidth}
		\centering
		\includegraphics[width=\linewidth]{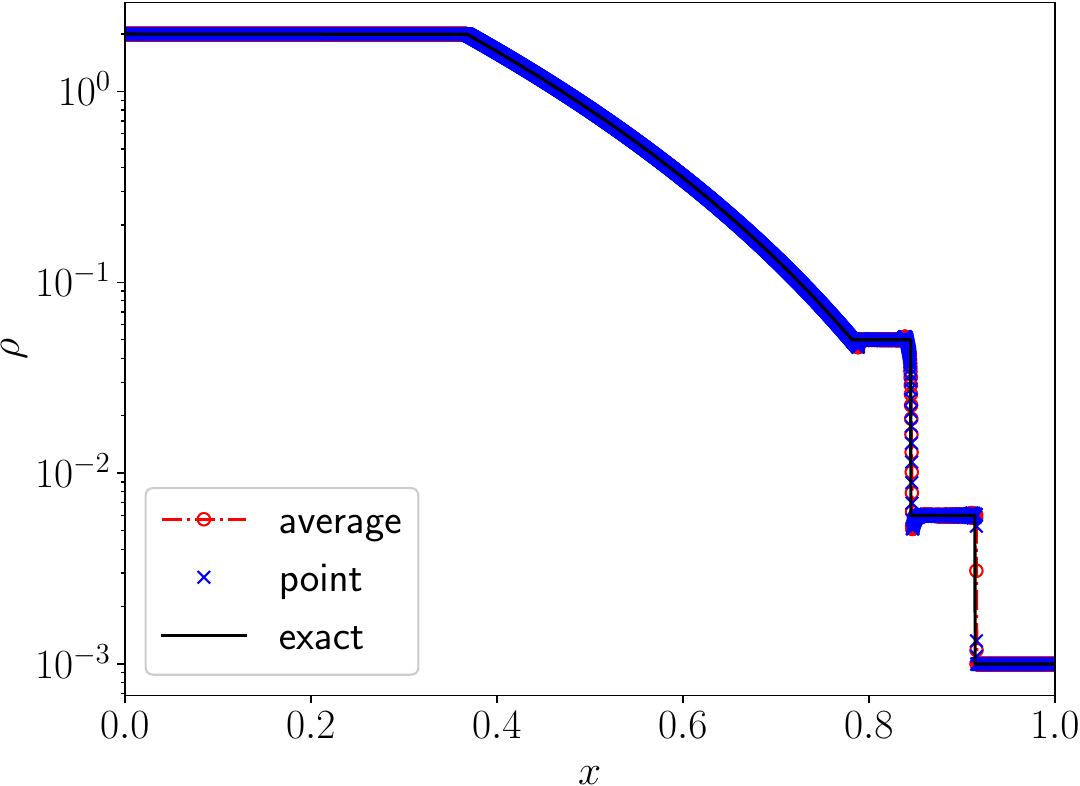}
	\end{subfigure}
	\begin{subfigure}[b]{0.24\textwidth}
		\centering
		\includegraphics[width=\linewidth]{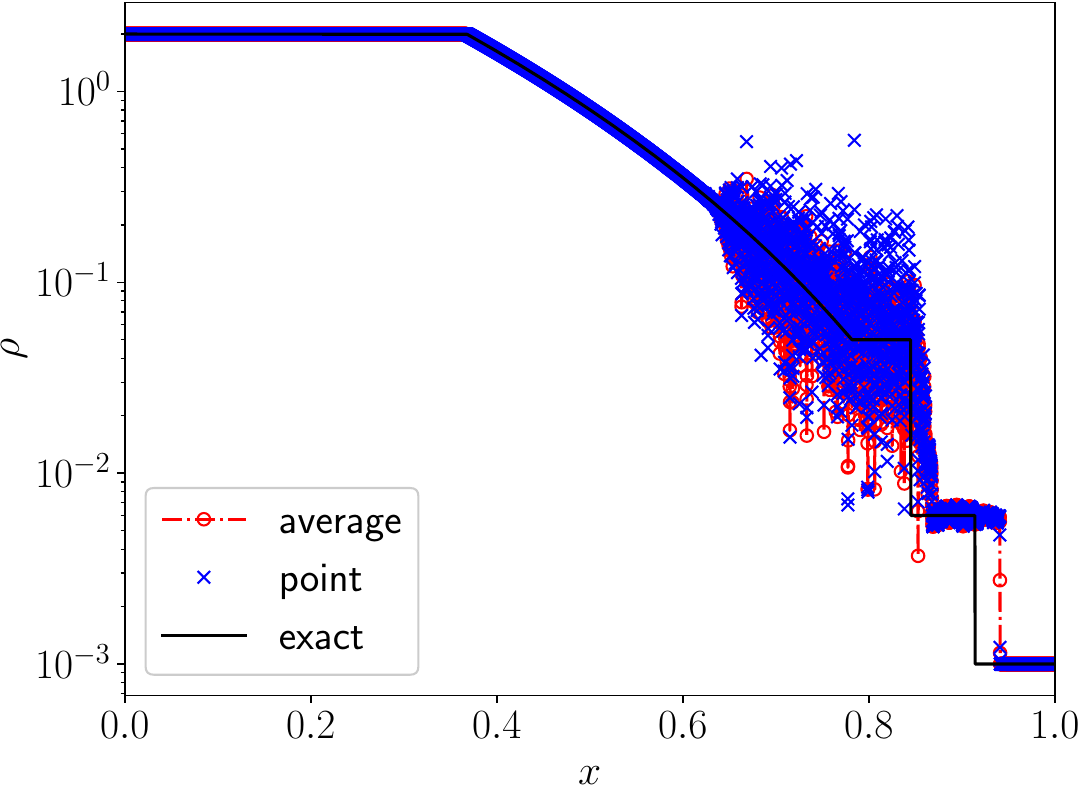}
	\end{subfigure}
	\caption{\Cref{ex:1d_leblanc}, LeBlanc Riemann problem.
		The numerical solutions are computed with the BP limitings for the cell average and point value updates on a uniform mesh of $6000$ cells.
		From left to right: JS, LLF, SW, and VH FVS.
		The CFL number is $0.1$ and the power law reconstruction is not
		activated (top) and activated (bottom).}
	\label{fig:1d_leblanc_fine_mesh_cfl0.1}
\end{figure}

\begin{figure}[htbp]
	\centering
	\begin{subfigure}[b]{0.24\textwidth}
		\centering
		\includegraphics[width=\linewidth]{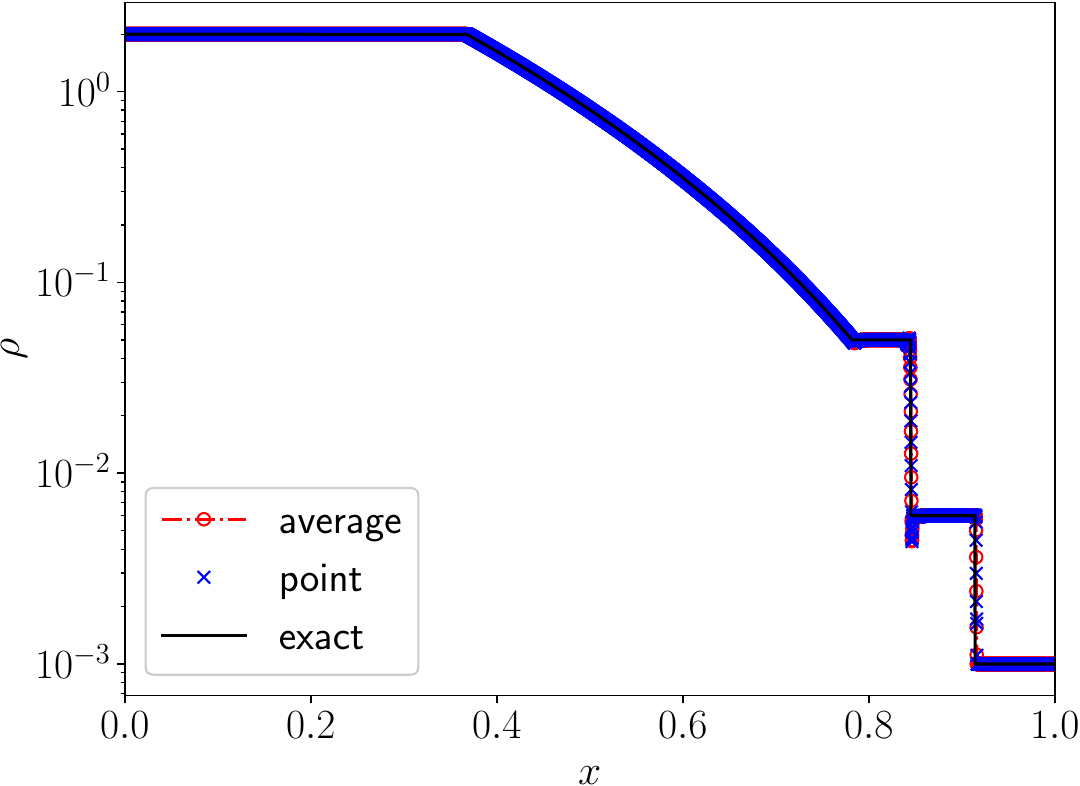}
	\end{subfigure}
	\begin{subfigure}[b]{0.24\textwidth}
		\centering
		\includegraphics[width=\linewidth]{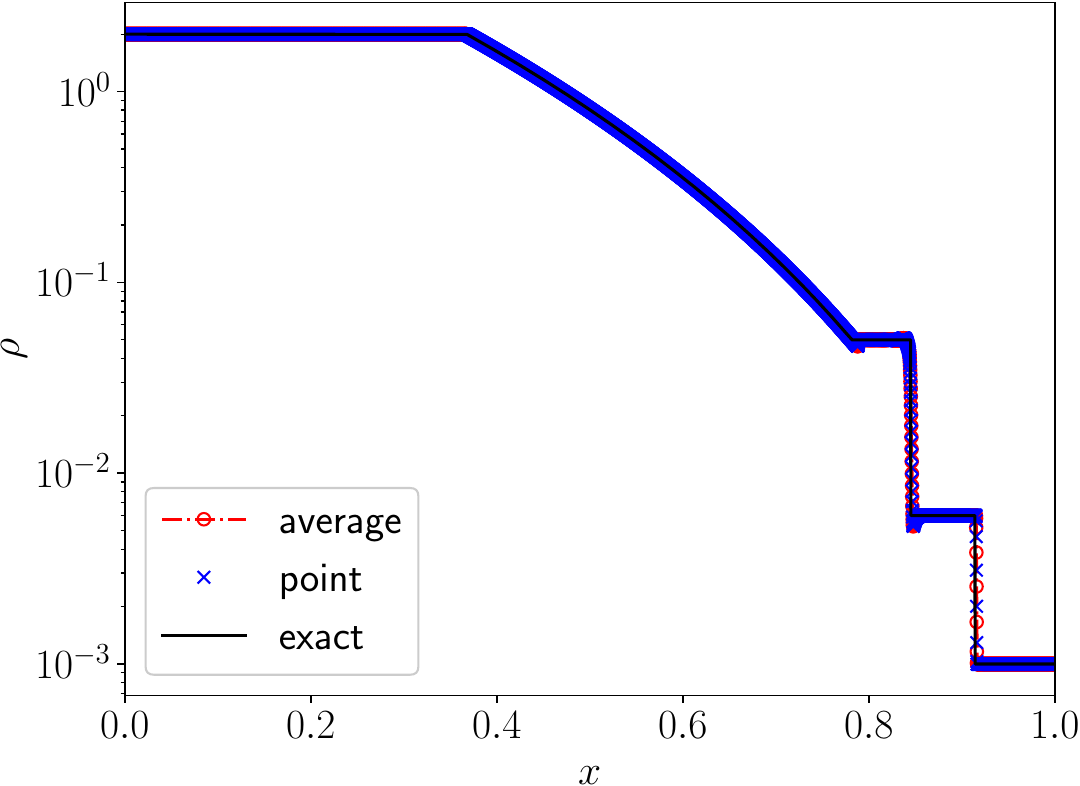}
	\end{subfigure}
	\begin{subfigure}[b]{0.24\textwidth}
		\centering
		\includegraphics[width=\linewidth]{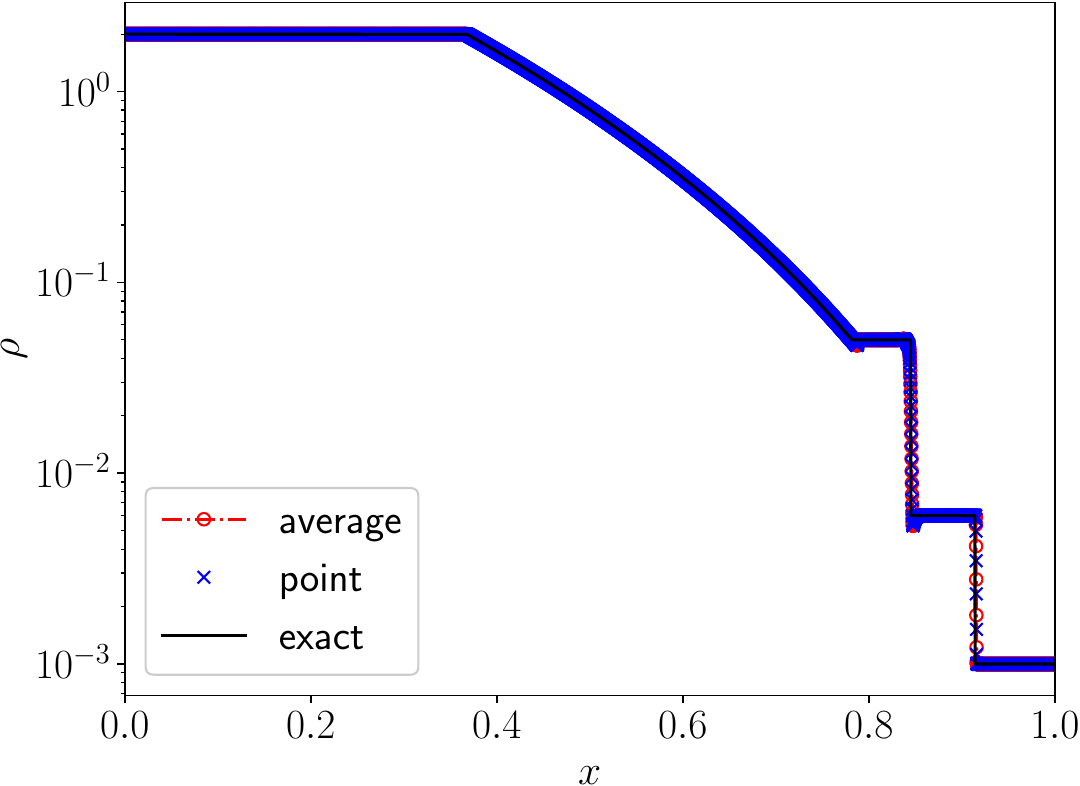}
	\end{subfigure}
	\begin{subfigure}[b]{0.24\textwidth}
		\centering
		\includegraphics[width=\linewidth]{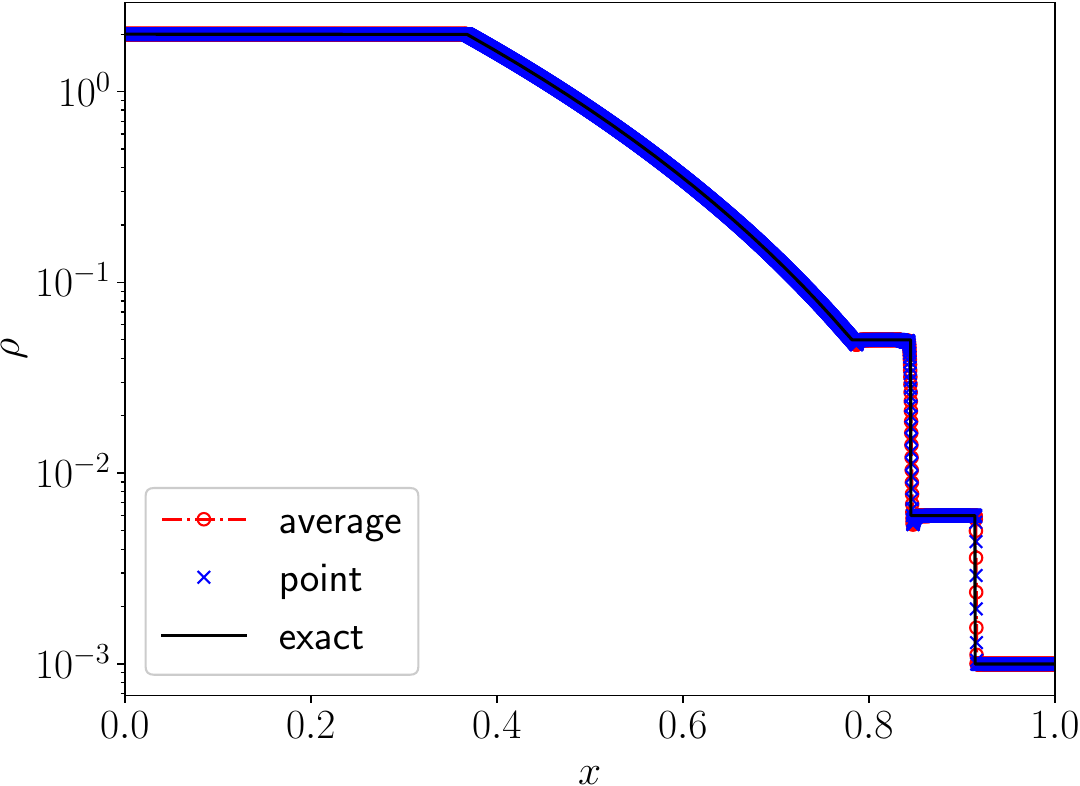}
	\end{subfigure}
	\caption{\Cref{ex:1d_leblanc}, LeBlanc Riemann problem.
		The numerical solutions are computed with the BP limitings for the cell average and point value updates on a uniform mesh of $6000$ cells.
		From left to right: JS, LLF, SW, and VH FVS.
		The CFL number is $0.4,0.4,0.4,0.1$ and the shock sensor-based limiting ($\kappa=10$) is used.}
	\label{fig:1d_leblanc_fine_mesh_shock_sensor}
\end{figure}
\end{example}

\begin{example}[Sedov problem]\label{ex:1d_sedov}\rm
	In this problem, a volume of uniform density and temperature is initialized, and a large quantity of thermal energy is injected at the center, developing into a blast wave that evolves in time in a self-similar fashion \cite{Sedov_1959_Similarity_book}.
	An exact analytical solution based on self-similarity arguments is available \cite{Kamm_2007_efficient}, which contains very low
	density with strong shocks.
	For the background value, the initial density is one, velocity is zero, and total energy is $10^{-12}$ everywhere except that
	in the centered cell, the total energy of the cell average and point values at two cell interfaces are $3.2\times 10^6/\Delta x$ with $\Delta x = 4/N$ with $N$ the number of cells, which is used to emulate a $\delta$-function at the center.
	It should be noted that if the two point values at the interfaces of the centered cell are initialized with the background value, the transonic issue appears for the JS.
	The test is solved until $T= 10^{-3}$.

This test is run with $N=801$ cells, and the density plots in the right half domain are shown in \cref{fig:1d_sedov}.
The BP limitings are adopted for the cell average and point value updates,
while the power law reconstruction is not used.
The maximal CFL numbers for different point value updates to be stable are also listed in the caption,
i.e., $0.1$ for the JS, $0.4$, $0.3$, and $0.3$ for the LLF, SW, and VH FVS, respectively. 
The numerical solutions obtained by the three FVS are nearly the same,
while there are some defects in the solution based on the JS.
Thus the LLF FVS is superior to others regarding the time step size and the shock-capturing ability.

\begin{figure}[htbp]
	\centering
	\begin{subfigure}[b]{0.24\textwidth}
		\centering
		\includegraphics[width=\linewidth]{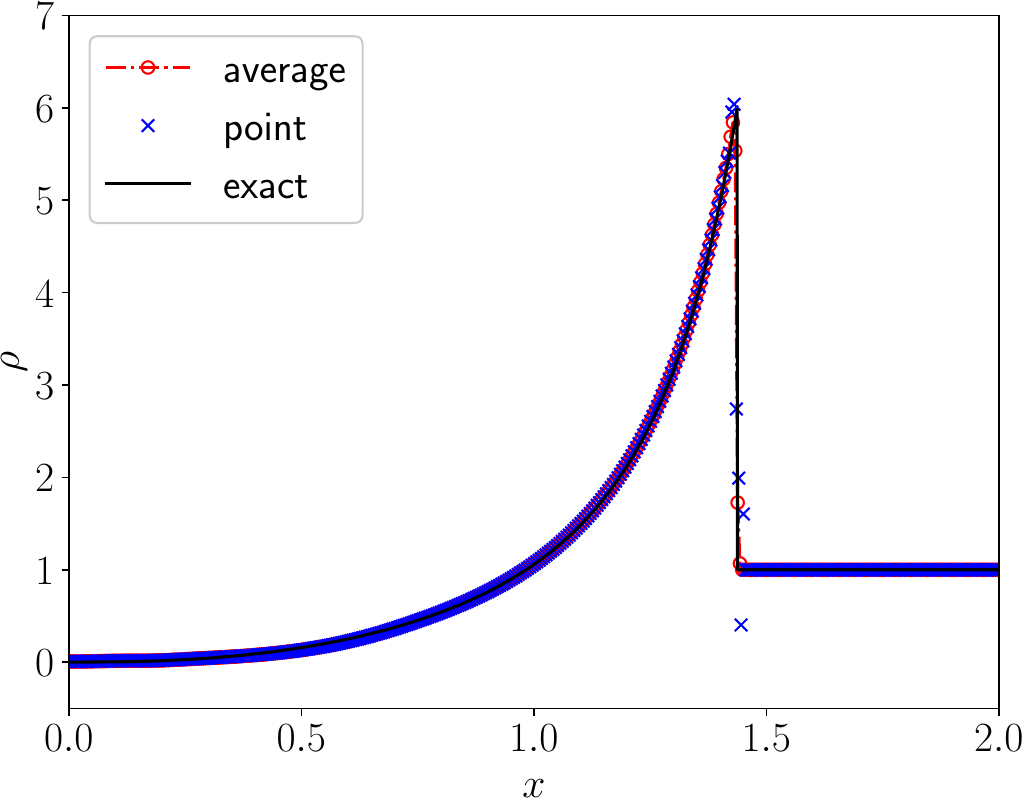}
	\end{subfigure}
	\begin{subfigure}[b]{0.24\textwidth}
		\centering
		\includegraphics[width=\linewidth]{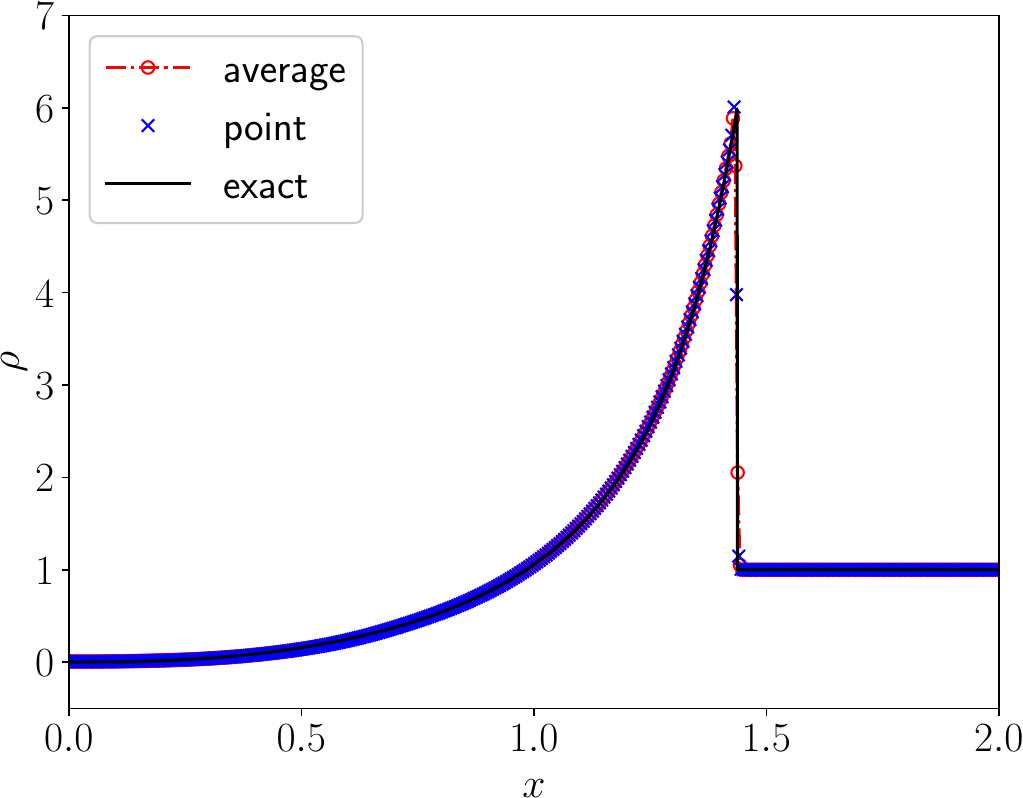}
	\end{subfigure}
	\begin{subfigure}[b]{0.24\textwidth}
		\centering
		\includegraphics[width=\linewidth]{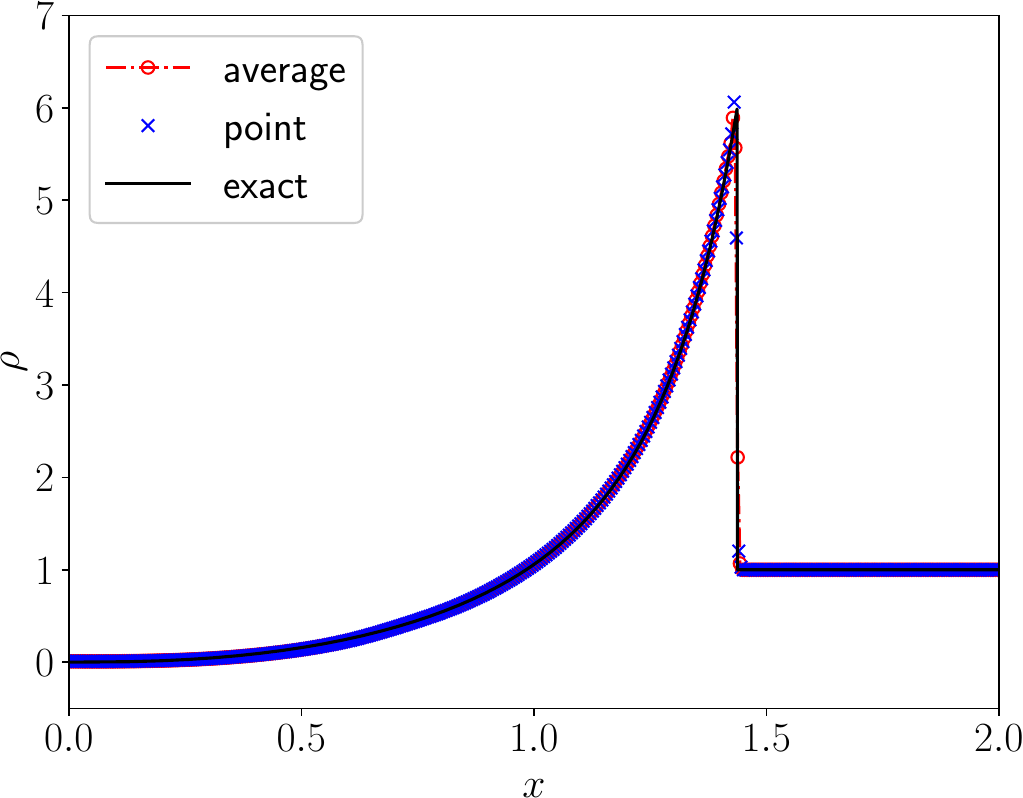}
	\end{subfigure}
	\begin{subfigure}[b]{0.24\textwidth}
		\centering
		\includegraphics[width=\linewidth]{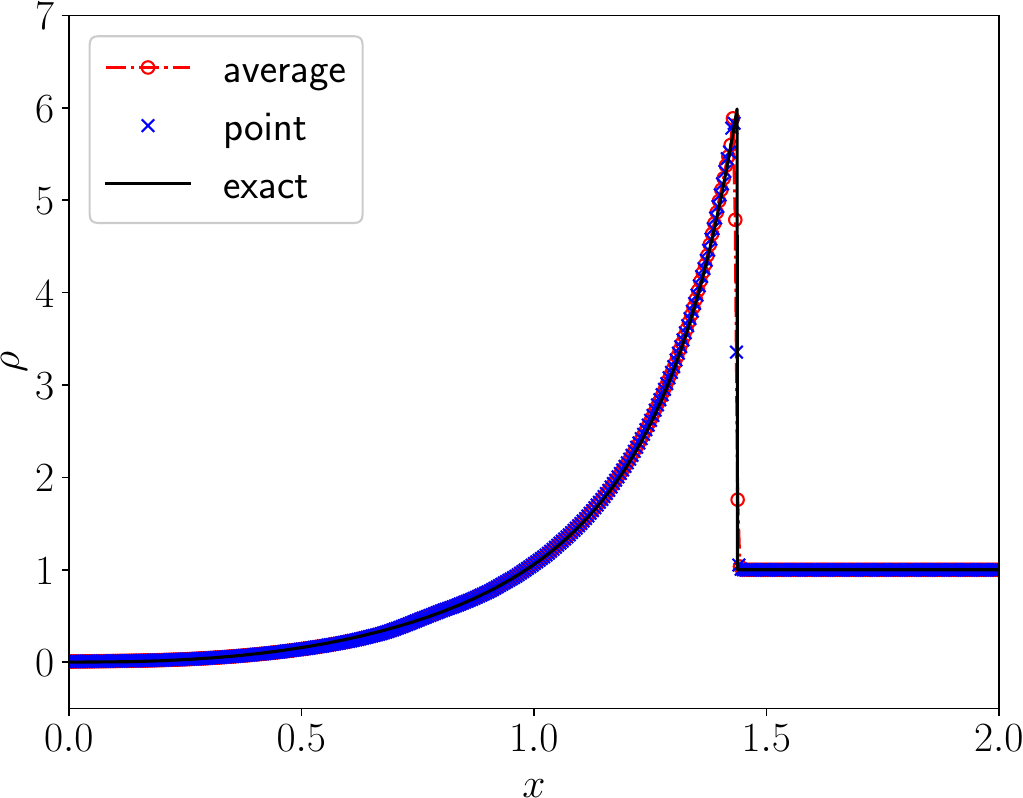}
	\end{subfigure}
	\caption{\Cref{ex:1d_sedov}, Sedov problem.
		The numerical solutions are computed with the BP limitings for the cell average and point value updates on a uniform mesh of $801$ cells,
        without the power law reconstruction.
	The CFL number is (from left to right): $0.1$ for the JS,
		$0.4$ for the LLF FVS,
		$0.3$ for the SW FVS,
		$0.25$ for the VH FVS.}
	\label{fig:1d_sedov}
\end{figure}
\end{example}

\begin{example}[Blast wave interaction \cite{Woodward_1984_numerical_JoCP}]\label{ex:1d_blast_wave}\rm
	This test describes the interaction of two strong shocks in the domain $[0,1]$ with reflective boundary conditions.
	The test is solved until $T=0.038$.

Due to the low-pressure region, the schemes blow up without the BP limitings.
\Cref{fig:1d_blast_wave_coarse_mesh} shows the density profiles and corresponding enlarged views in $x\in[0.62, 0.82]$ obtained by using the BP limitings on a uniform mesh of $800$ cells,
in which the power law reconstruction is not activated.
It is seen that the numerical solutions are close to the reference solution,
although there are some oscillations in the enlarged views.
Then the power law reconstruction is additionally adopted to see if it can suppress the oscillations.
The results with the CFL number $0.1$ and a refined mesh of $1600$ cells are shown in \cref{fig:1d_blast_wave_fine_mesh_pwl},
from which one can observe that the oscillations reduce,
and the LLF FVS gives the best result.

\begin{figure}[htbp]
	\centering
	\begin{subfigure}[b]{0.24\textwidth}
		\centering
		\includegraphics[width=1.0\linewidth]{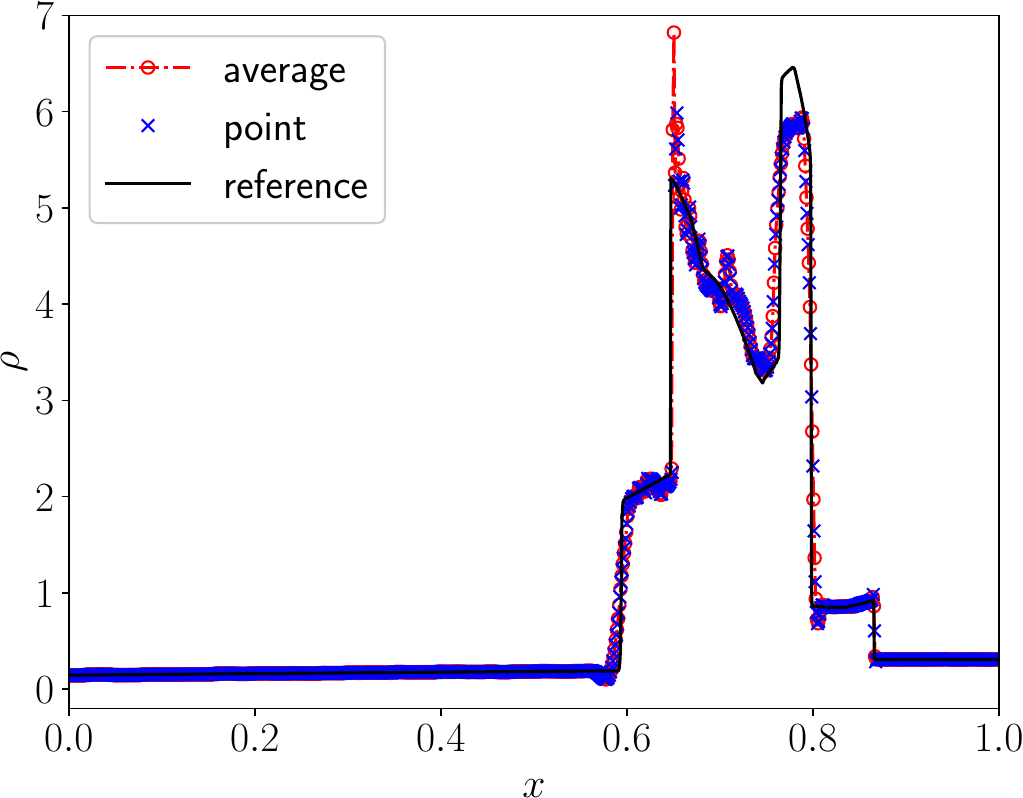}
	\end{subfigure}
	\begin{subfigure}[b]{0.24\textwidth}
		\centering
		\includegraphics[width=1.0\linewidth]{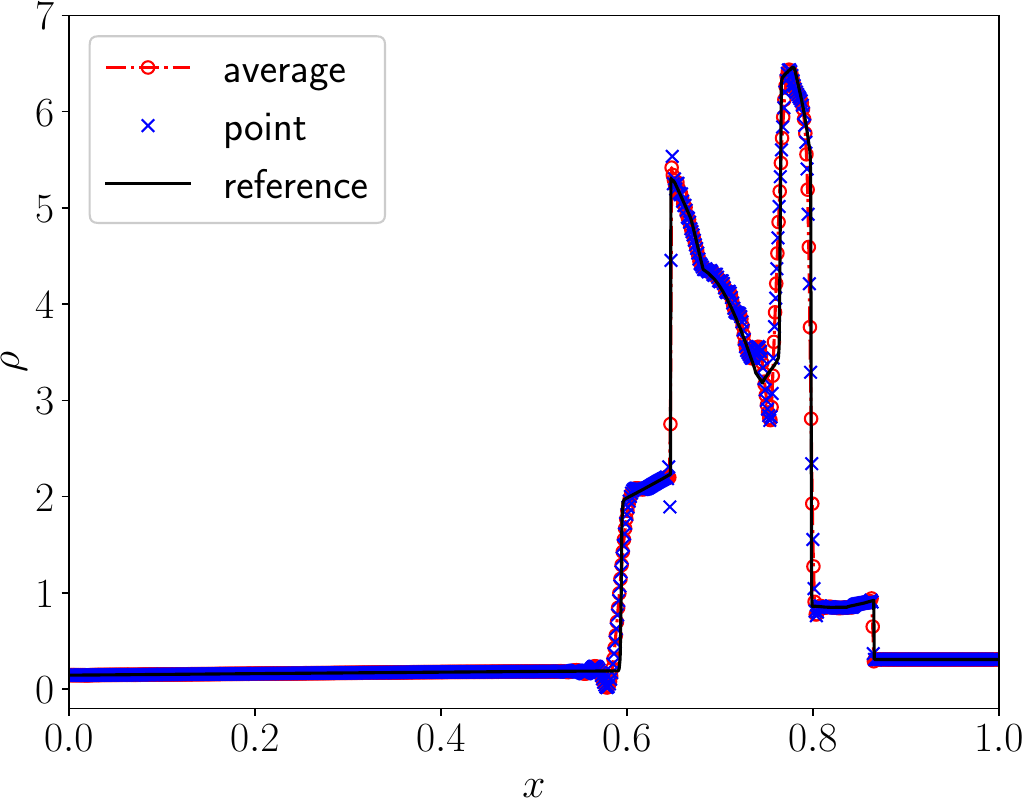}
	\end{subfigure}
	\begin{subfigure}[b]{0.24\textwidth}
		\centering
		\includegraphics[width=1.0\linewidth]{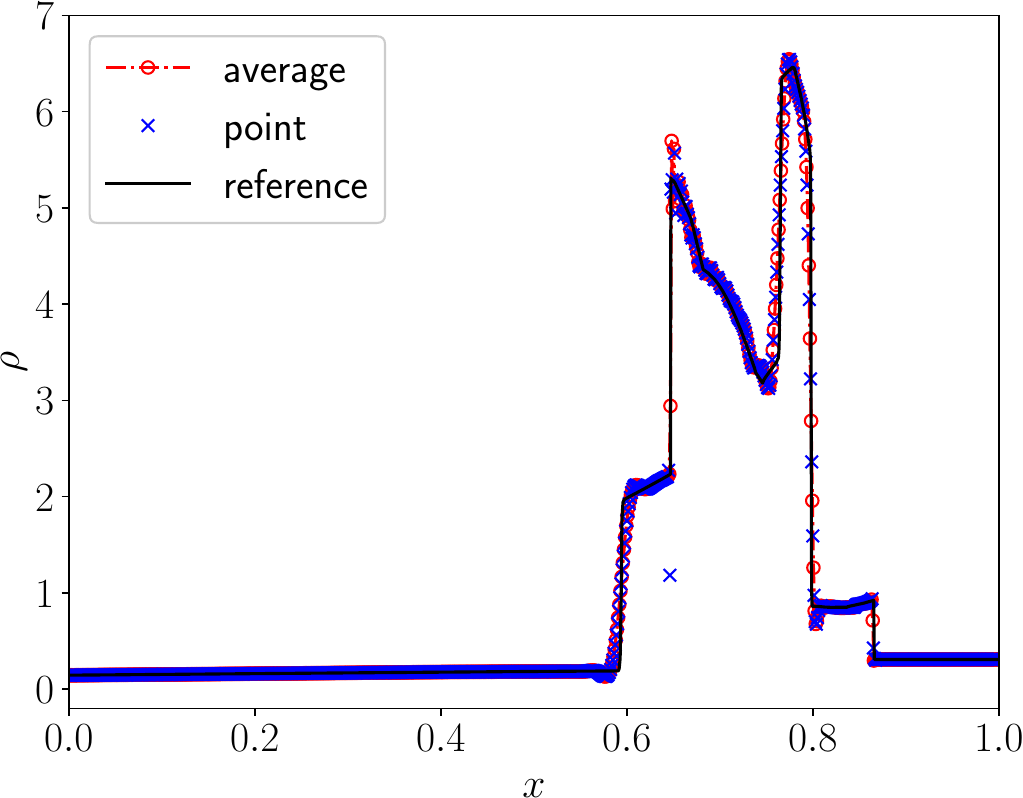}
	\end{subfigure}
	\begin{subfigure}[b]{0.24\textwidth}
		\centering
		\includegraphics[width=1.0\linewidth]{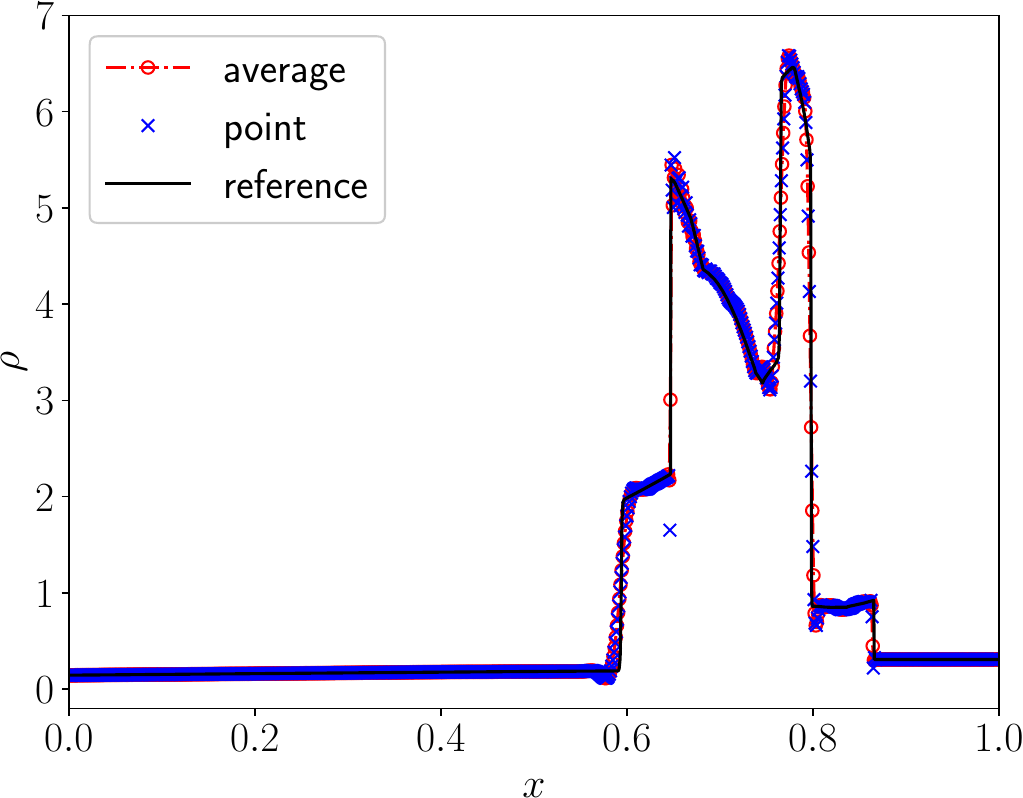}
	\end{subfigure}
	
	\begin{subfigure}[b]{0.24\textwidth}
		\centering
		\includegraphics[width=1.0\linewidth]{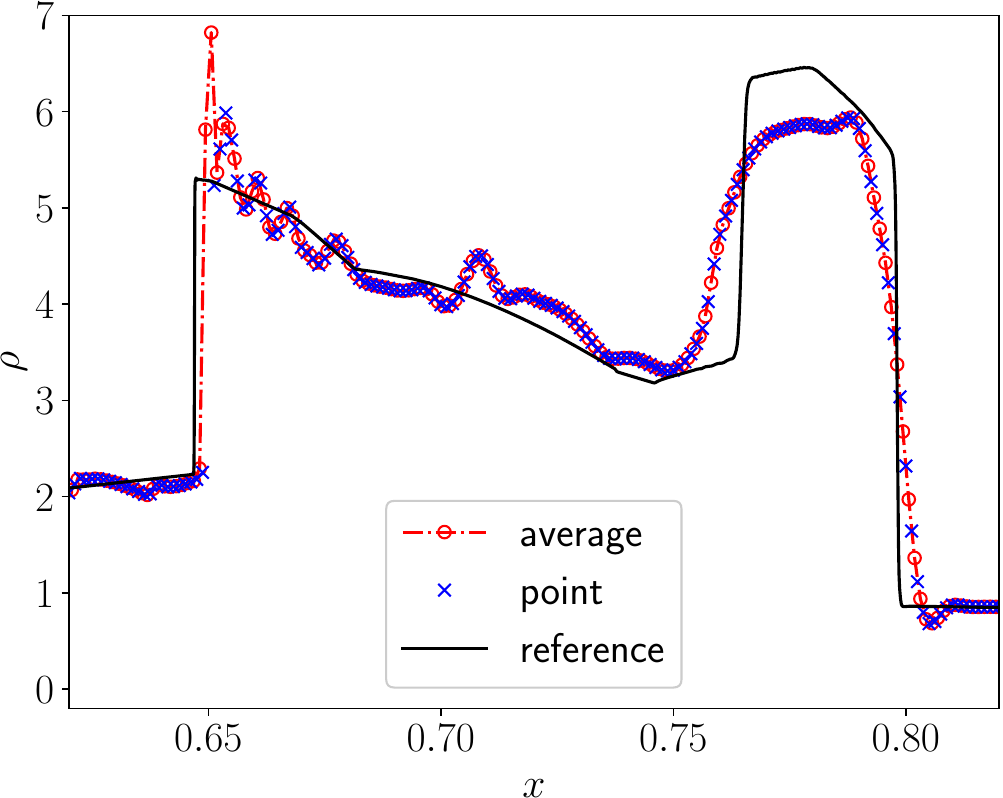}
	\end{subfigure}
	\begin{subfigure}[b]{0.24\textwidth}
		\centering
		\includegraphics[width=1.0\linewidth]{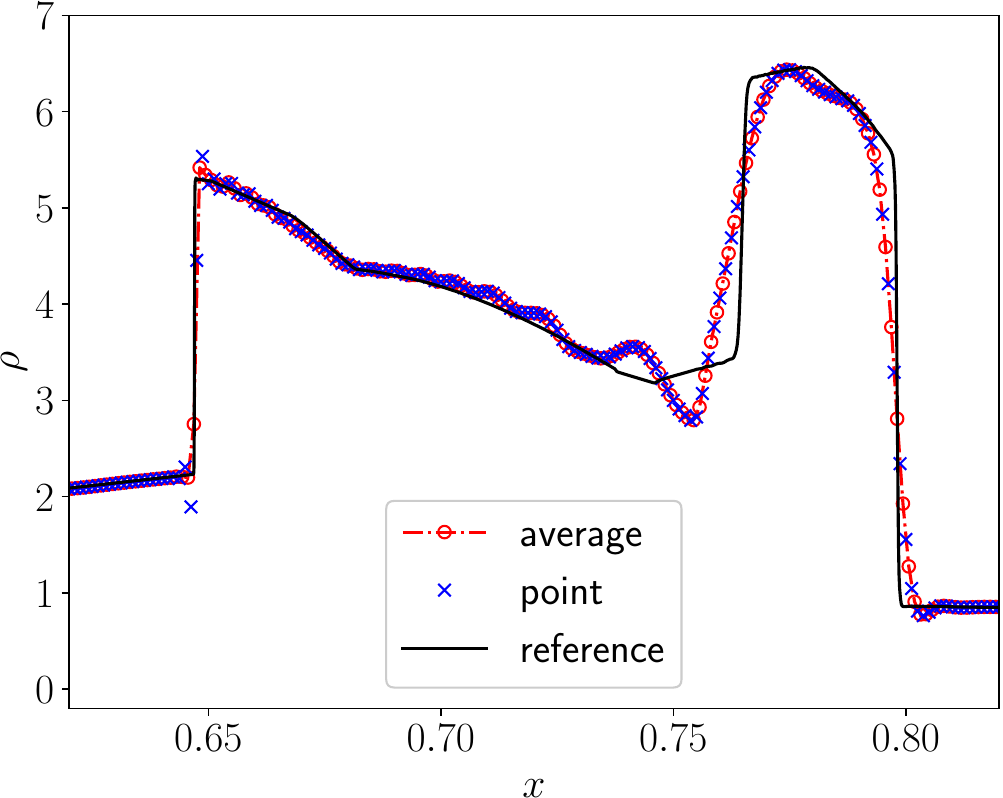}
	\end{subfigure}
	\begin{subfigure}[b]{0.24\textwidth}
		\centering
		\includegraphics[width=1.0\linewidth]{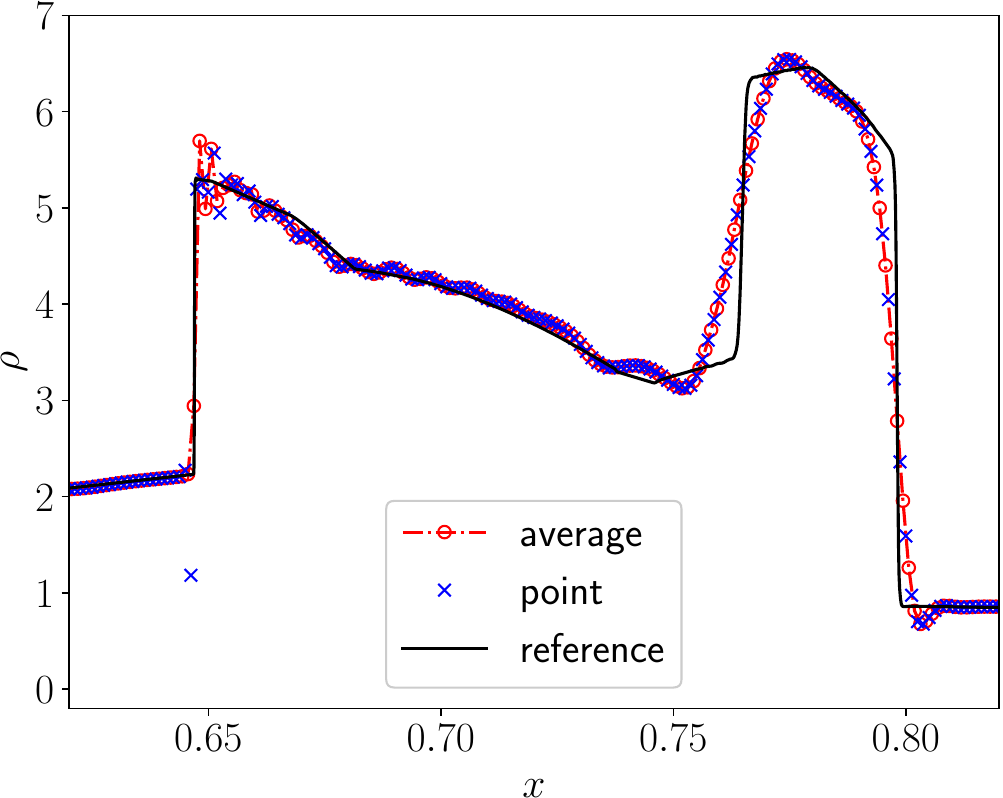}
	\end{subfigure}
	\begin{subfigure}[b]{0.24\textwidth}
		\centering
		\includegraphics[width=1.0\linewidth]{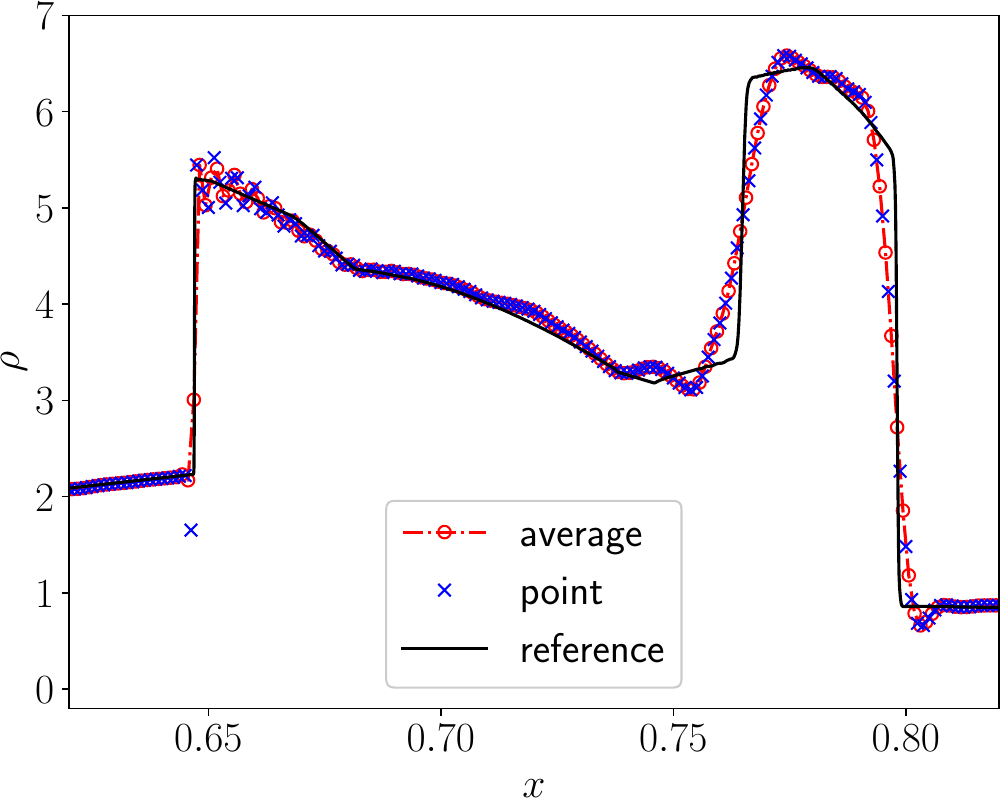}
	\end{subfigure}
	\caption{\Cref{ex:1d_blast_wave}, blast wave interaction.
		The numerical solutions are computed with the BP limitings for the cell average and point value updates on a uniform mesh of $800$ cells.
		The power law reconstruction is not used,
		and from left to right: the CFL number is $0.4$, $0.4$, $0.4$, $0.35$ for the JS, LLF, SW, and VH FVS, respectively.
		The corresponding enlarged views in $[0.62, 0.82]$ are shown in the bottom row.}
	\label{fig:1d_blast_wave_coarse_mesh}
\end{figure}

\begin{figure}[htbp]
	\centering
	\begin{subfigure}[b]{0.24\textwidth}
		\centering
		\includegraphics[width=1.0\linewidth]{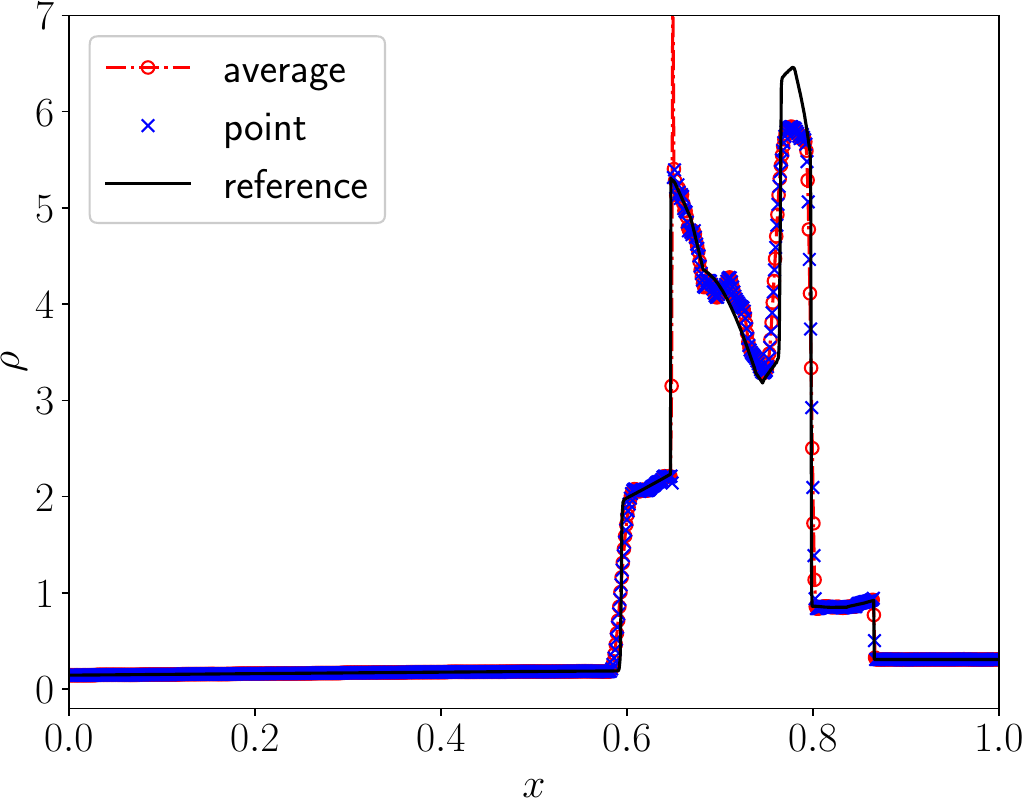}
	\end{subfigure}
	\begin{subfigure}[b]{0.24\textwidth}
		\centering
		\includegraphics[width=1.0\linewidth]{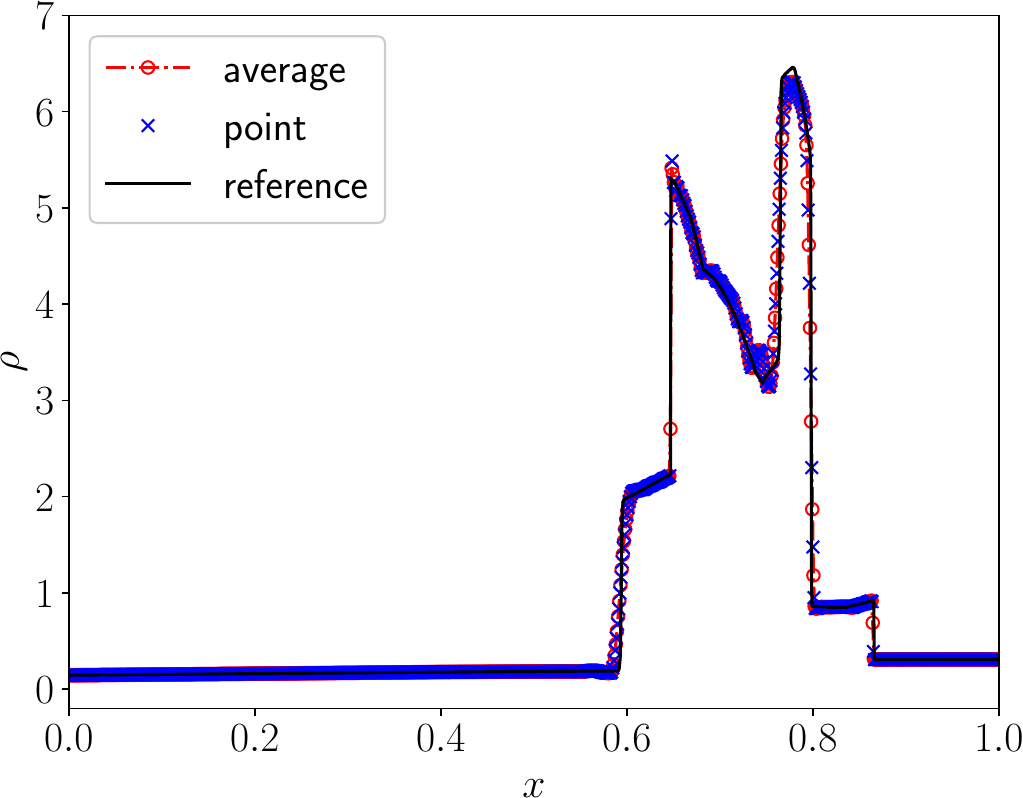}
	\end{subfigure}
	\begin{subfigure}[b]{0.24\textwidth}
		\centering
		\includegraphics[width=1.0\linewidth]{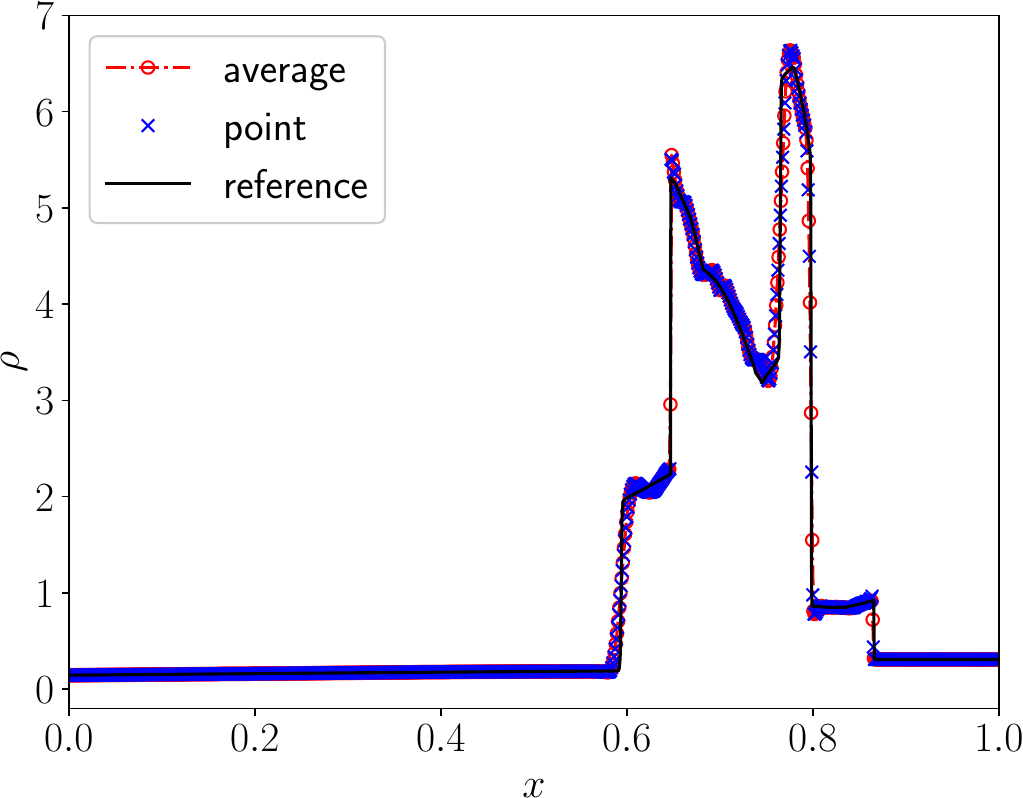}
	\end{subfigure}
	\begin{subfigure}[b]{0.24\textwidth}
		\centering
		\includegraphics[width=1.0\linewidth]{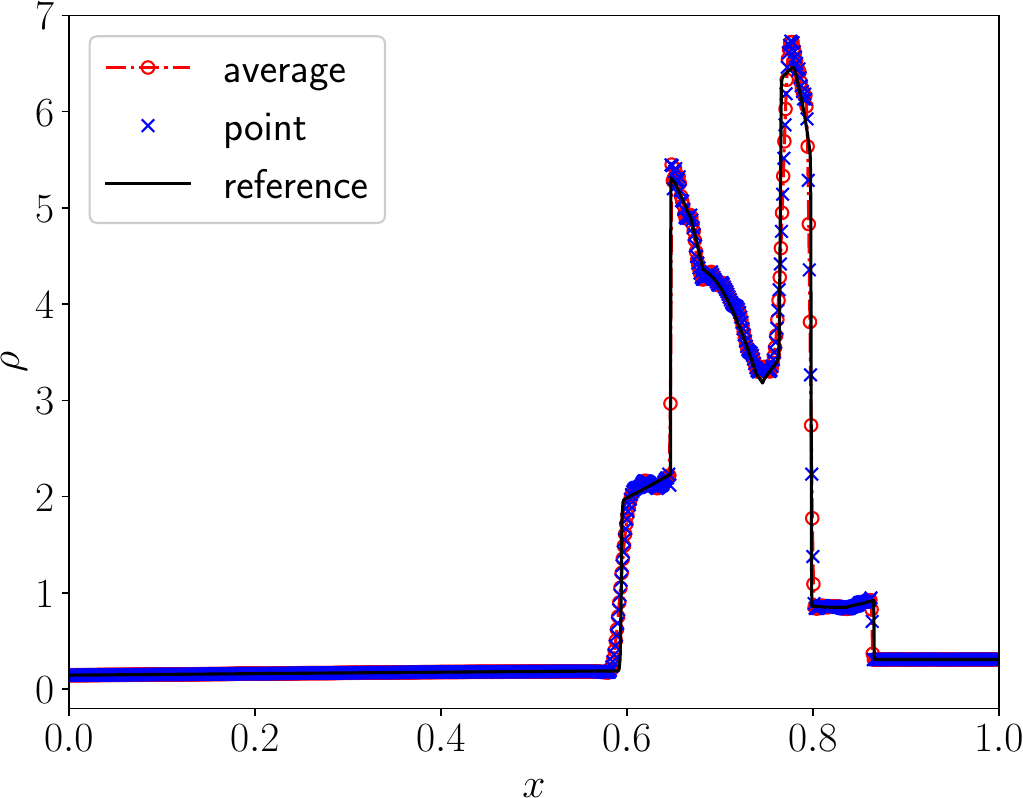}
	\end{subfigure}
	
	\begin{subfigure}[b]{0.24\textwidth}
		\centering
		\includegraphics[width=1.0\linewidth]{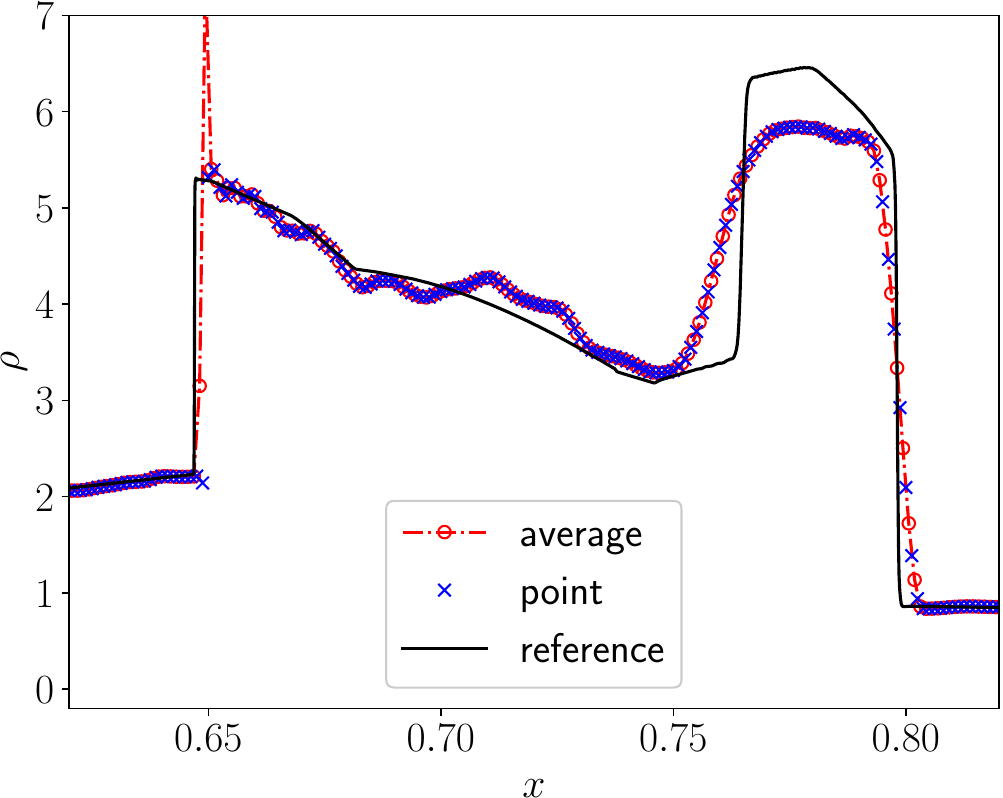}
	\end{subfigure}
	\begin{subfigure}[b]{0.24\textwidth}
		\centering
		\includegraphics[width=1.0\linewidth]{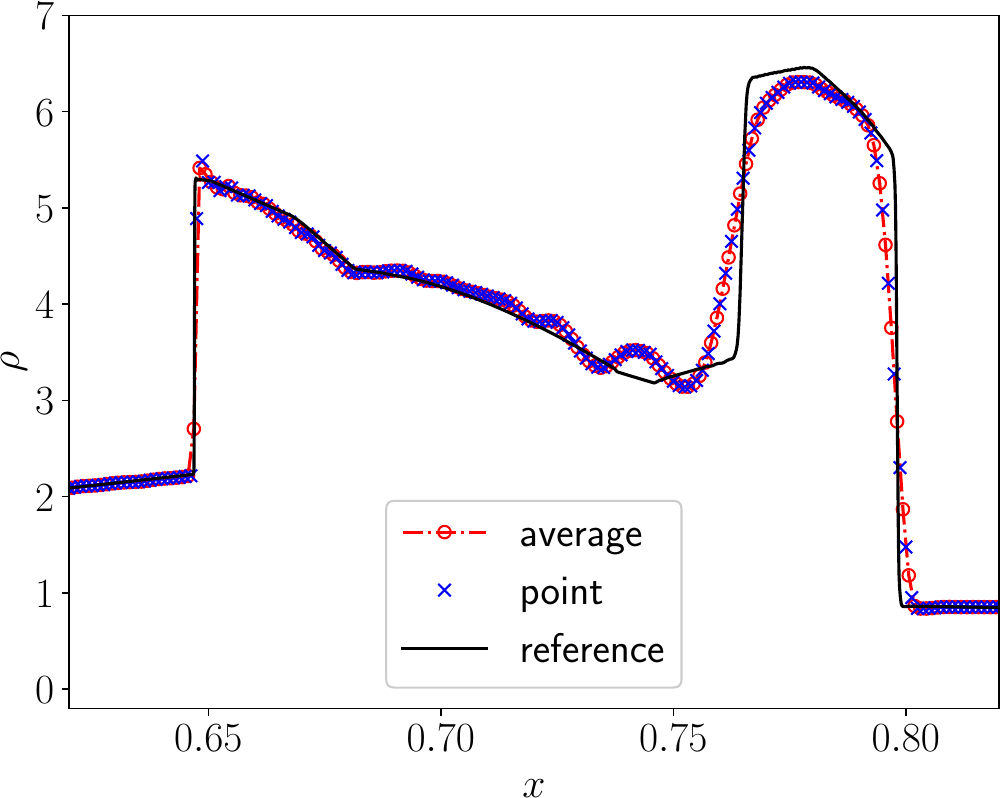}
	\end{subfigure}
	\begin{subfigure}[b]{0.24\textwidth}
		\centering
		\includegraphics[width=1.0\linewidth]{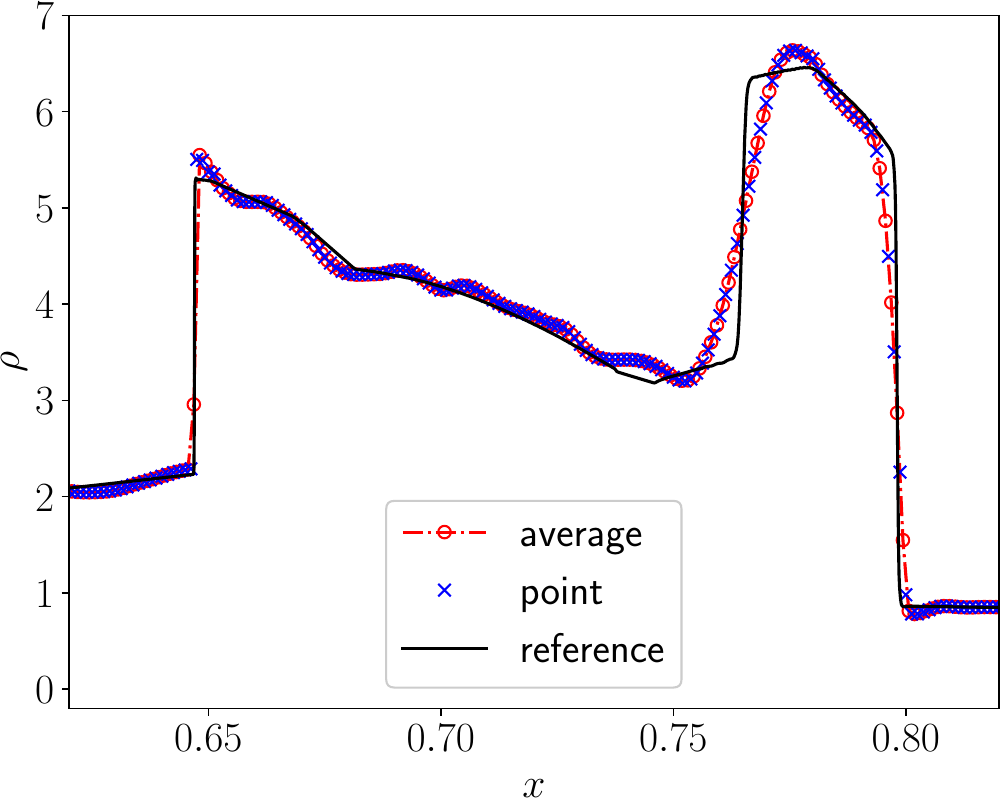}
	\end{subfigure}
	\begin{subfigure}[b]{0.24\textwidth}
		\centering
		\includegraphics[width=1.0\linewidth]{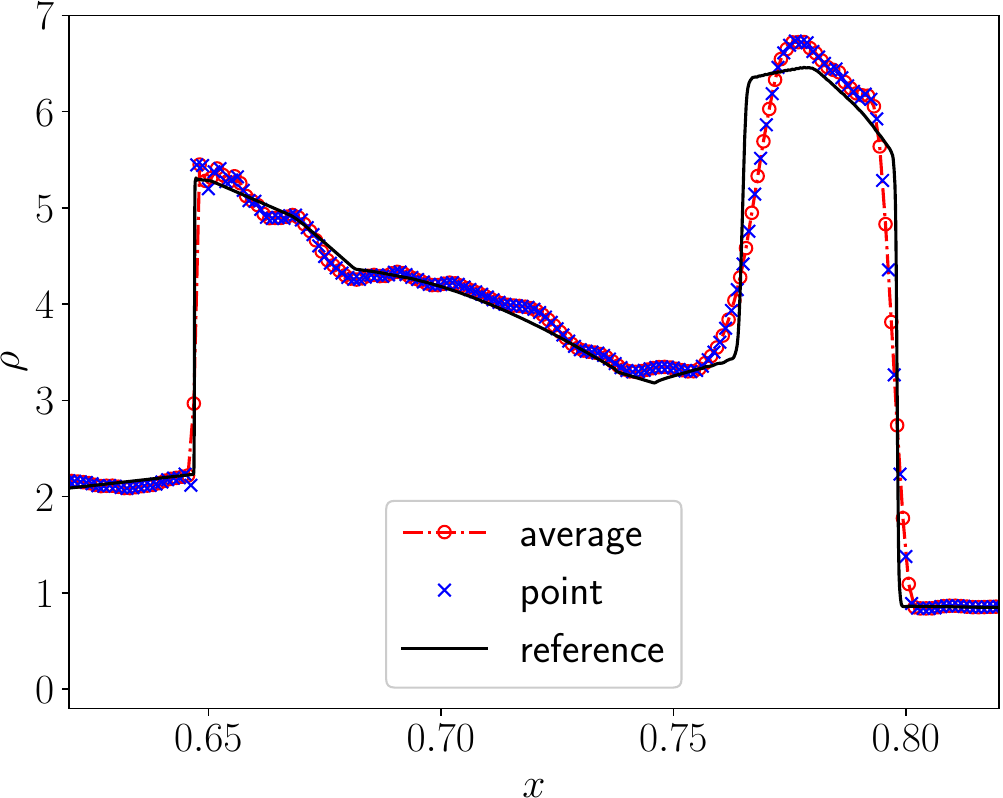}
	\end{subfigure}
	\caption{\Cref{ex:1d_blast_wave}, blast wave interaction.
		The numerical solutions are computed with the power law reconstruction and the BP limitings for the cell average and point values update on a uniform mesh of $800$ cells.
		The CFL number is $0.1$ for all the point value updates,
		and the corresponding enlarged views in $[0.62, 0.82]$ are shown in the bottom row.
		From left to right: JS, LLF, SW, and VH FVS.
		}
	\label{fig:1d_blast_wave_fine_mesh_pwl}
\end{figure}

\begin{figure}[htbp]
	\centering
	\begin{subfigure}[b]{0.24\textwidth}
		\centering
		\includegraphics[width=1.0\linewidth]{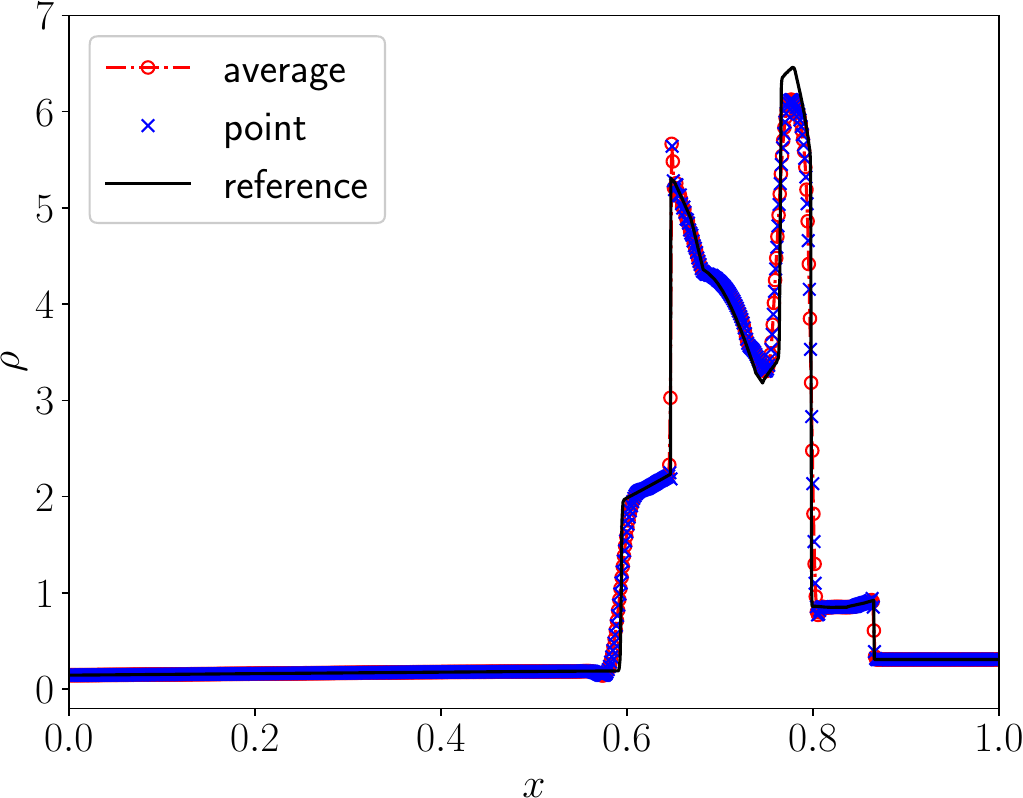}
	\end{subfigure}
	\begin{subfigure}[b]{0.24\textwidth}
		\centering
		\includegraphics[width=1.0\linewidth]{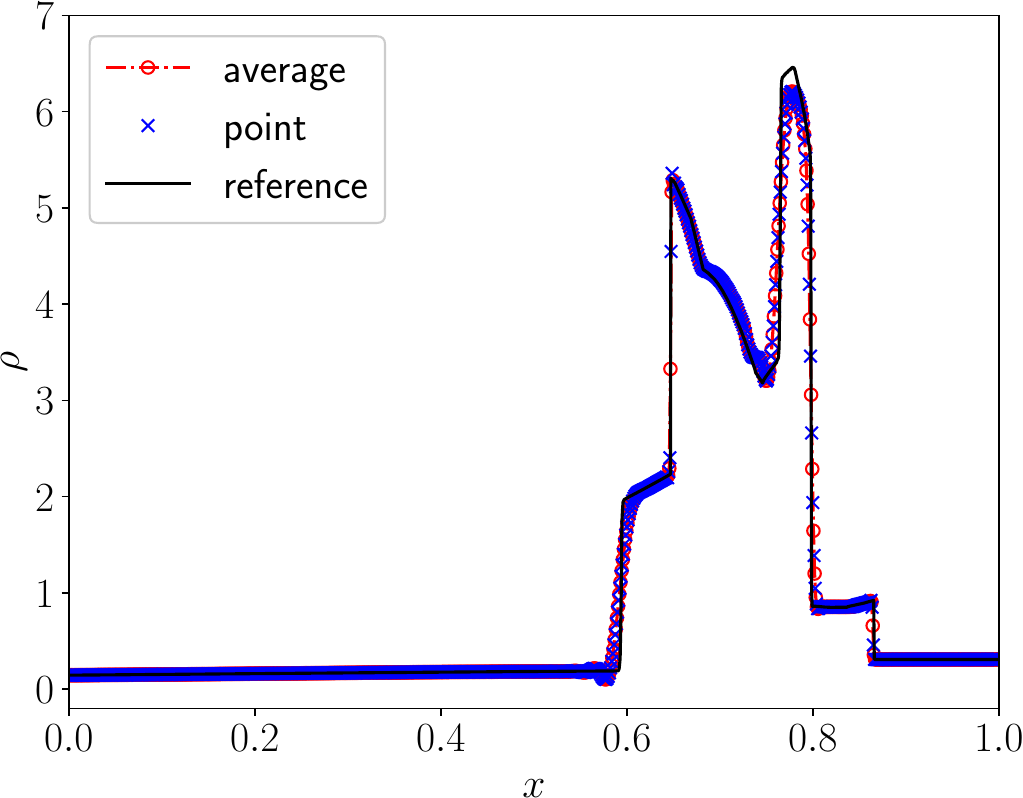}
	\end{subfigure}
	\begin{subfigure}[b]{0.24\textwidth}
		\centering
		\includegraphics[width=1.0\linewidth]{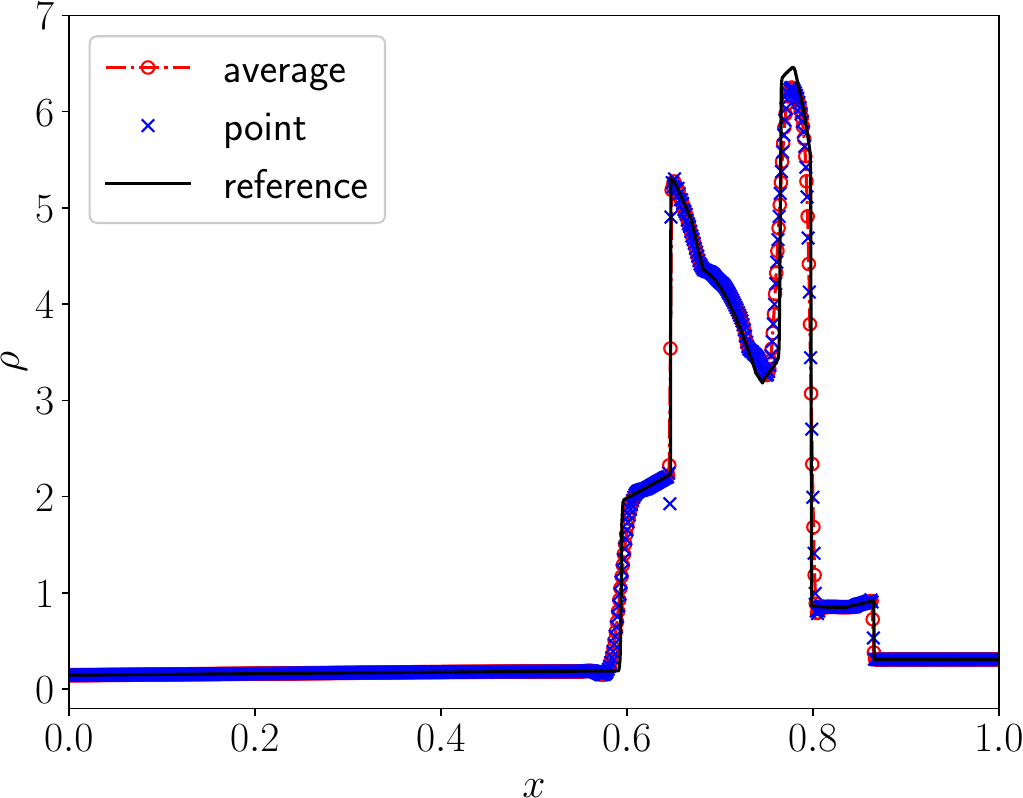}
	\end{subfigure}
	\begin{subfigure}[b]{0.24\textwidth}
		\centering
		\includegraphics[width=1.0\linewidth]{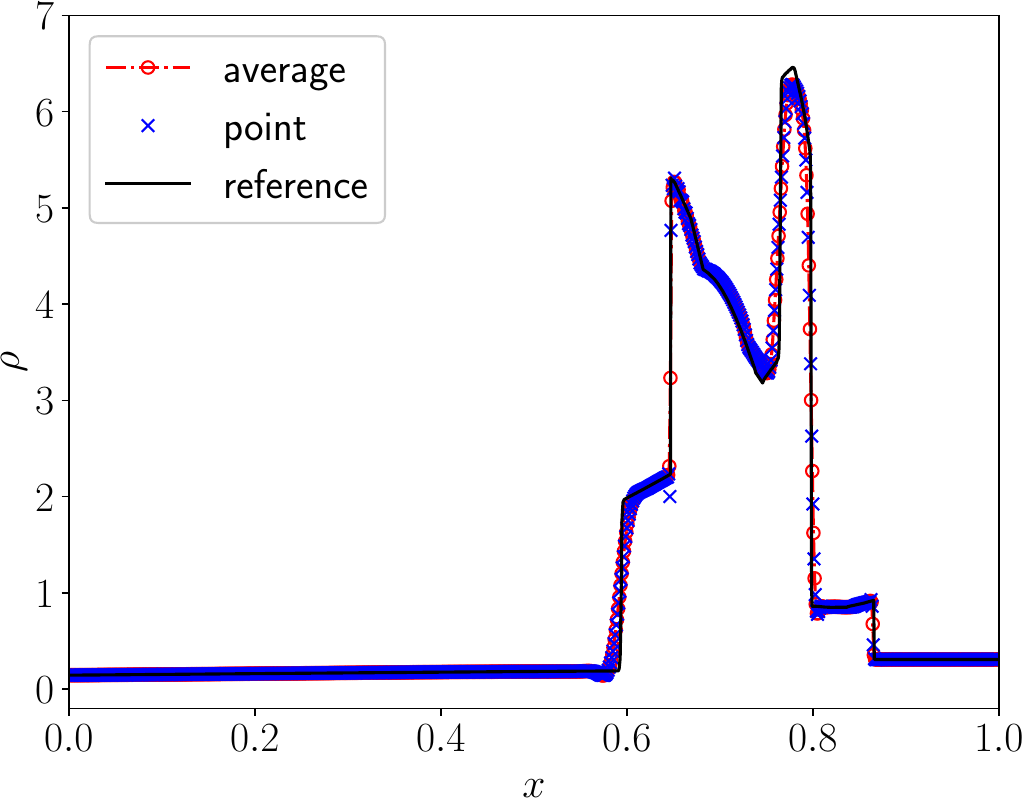}
	\end{subfigure}
	
	\begin{subfigure}[b]{0.24\textwidth}
		\centering
		\includegraphics[width=1.0\linewidth]{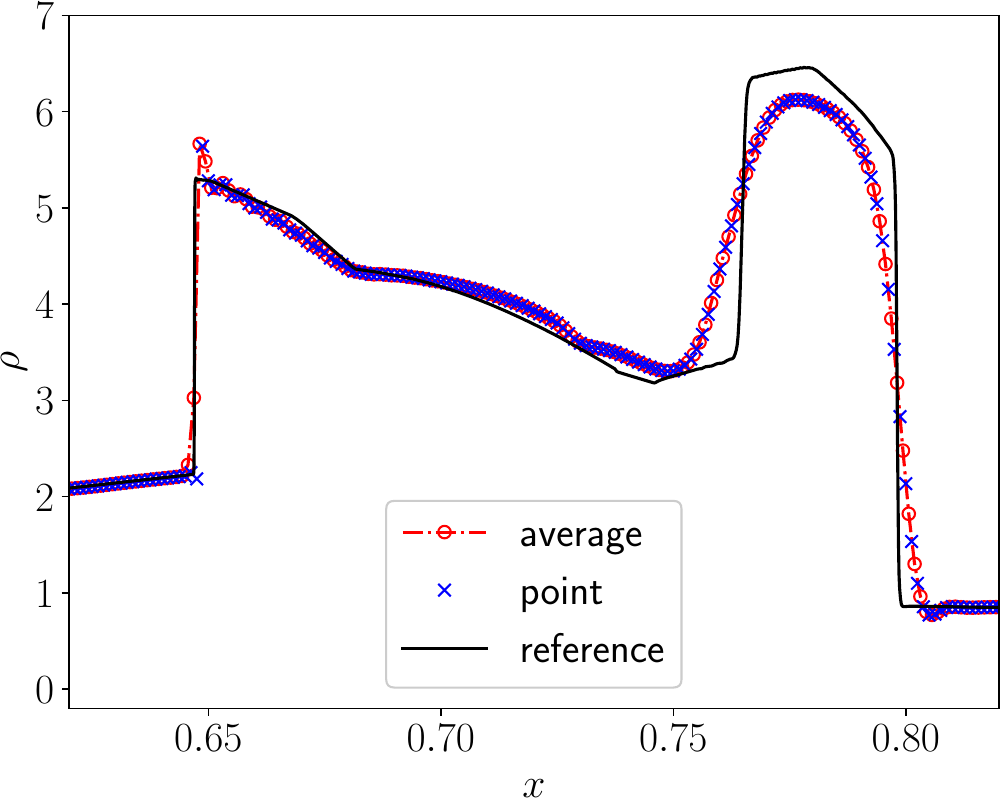}
	\end{subfigure}
	\begin{subfigure}[b]{0.24\textwidth}
		\centering
		\includegraphics[width=1.0\linewidth]{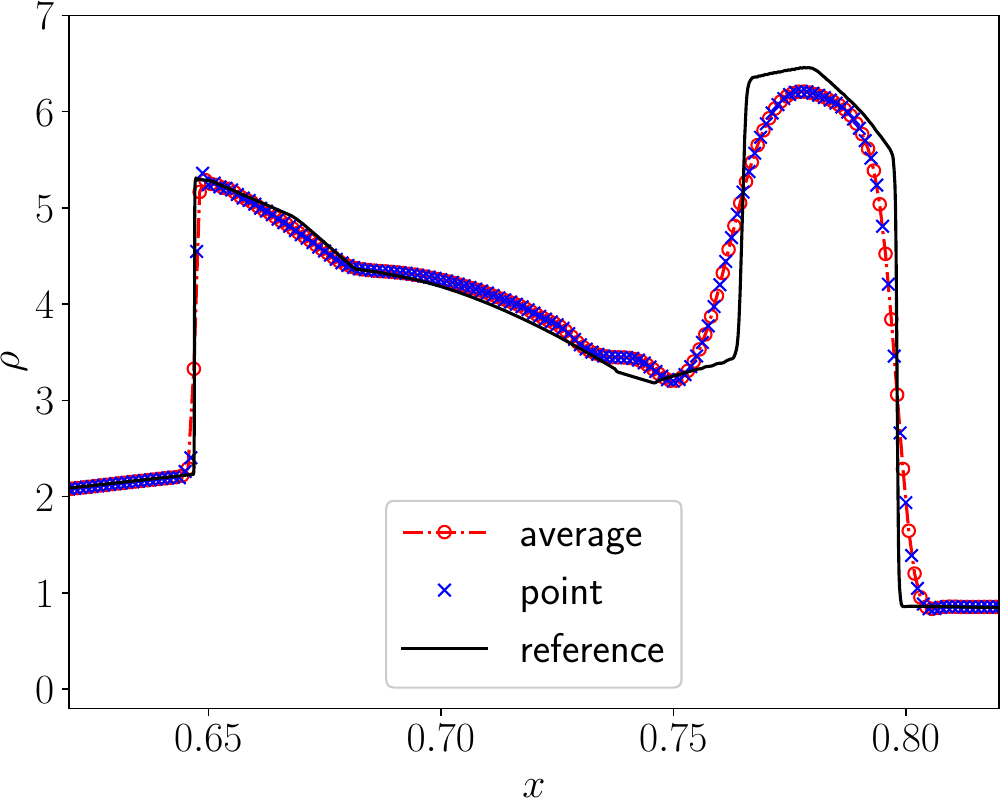}
	\end{subfigure}
	\begin{subfigure}[b]{0.24\textwidth}
		\centering
		\includegraphics[width=1.0\linewidth]{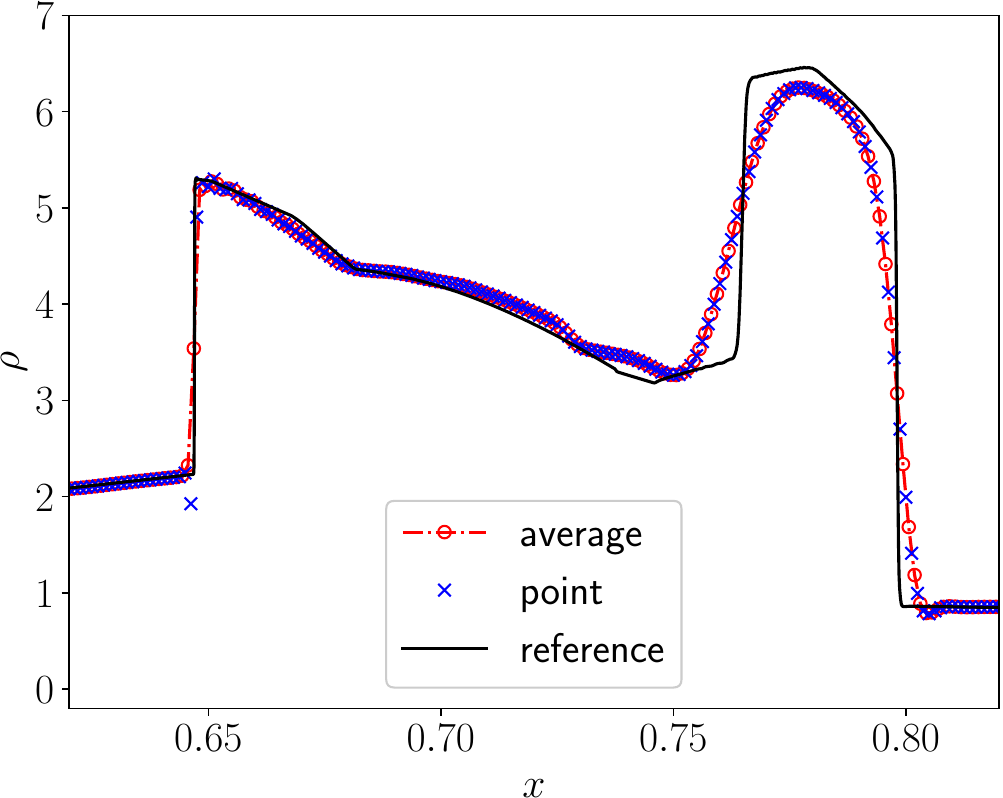}
	\end{subfigure}
	\begin{subfigure}[b]{0.24\textwidth}
		\centering
		\includegraphics[width=1.0\linewidth]{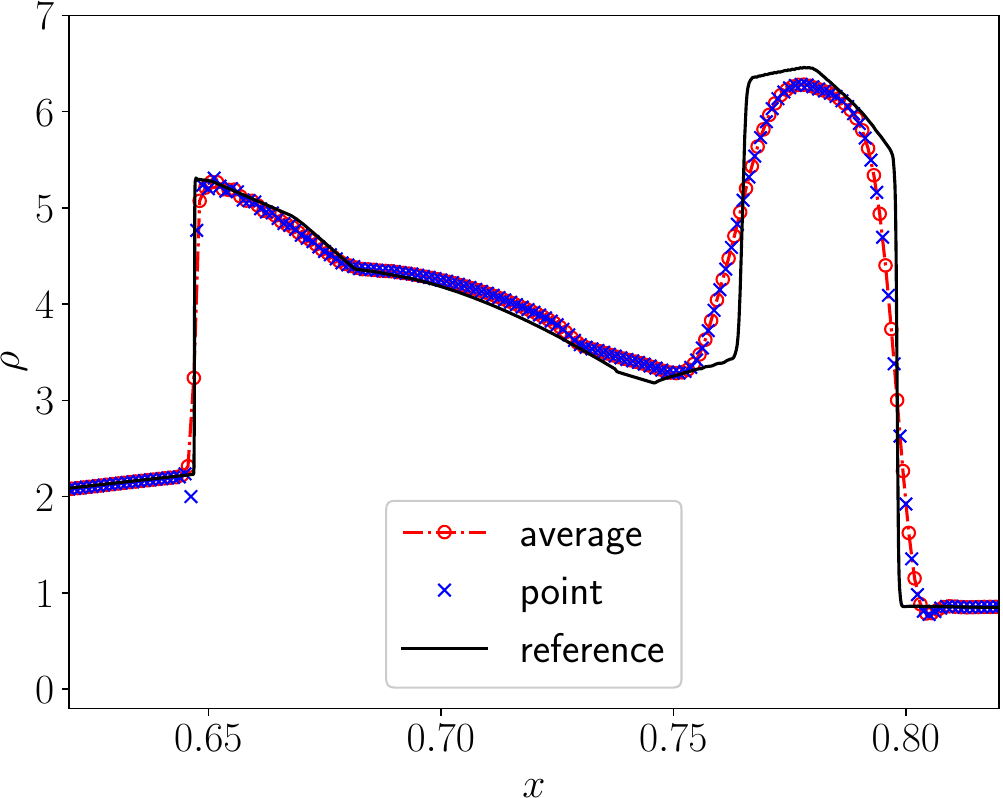}
	\end{subfigure}
	\caption{\Cref{ex:1d_blast_wave}, blast wave interaction.
		The numerical solutions are computed with the BP limitings for the cell average and point value updates on a uniform mesh of $800$ cells.
		The shock sensor-based limiting with $\kappa=1$ is used.
		The corresponding enlarged views in $[0.62, 0.82]$ are shown in the bottom row.}
	\label{fig:1d_blast_wave_coarse_mesh_kappa=1}
\end{figure}
\end{example}

\begin{remark}\rm
   In the numerical tests, the maximal CFL numbers for stability are obtained by experiments.
   Note that the constraints \cref{eq:1d_convex_combination_dt} and \cref{eq:1d_pnt_llf_dt} are used to guarantee the BP property, while the reduction of the CFL numbers is due to the stability issue for different FVS and power law reconstruction.
\end{remark}

\section{Conclusion}\label{sec:conclusion}
In the active flux (AF) methods, the way how point values at cell interfaces are updated is essential to achieve stability and high-order accuracy.
The point value update based on Jacobian splitting (JS) may lead to the so-called transonic issue for nonlinear problems due to inaccurate estimation of the upwind direction.
This paper proposed to use the flux vector splitting (FVS) for the point value update instead of the JS,
which keeps the continuous reconstruction as the original AF methods,
and offers a natural and uniform remedy to the transonic issue.
To further improve the robustness of the AF methods, this paper developed bound-preserving (BP) AF methods for general one-dimensional hyperbolic conservation laws,
achieved by blending the high-order AF methods with the first-order local Lax-Friedrichs (LLF) or Rusanov methods for both the cell average and point value updates,
where the convex limiting and scaling limiter were employed, respectively.
For scalar conservation laws, the blending coefficient was determined based on the global or local maximum principle,
while for the compressible Euler equations, it was obtained by enforcing the positivity of density and pressure.
Some challenging benchmark tests were conducted based on different choices of the point value update, including the JS, LLF, Steger-Warming, and Van Leer-H\"anel FVS.
The numerical results confirmed the accuracy, BP property, and shock-capturing ability of our methods,
and also showed that the LLF FVS is generally superior to others in terms of the CFL number and shock-capturing ability.
Our future work will include, among others, extending the current BP limitings to two-dimensional cases.
We may also explore other ways to further suppress oscillations for the Euler equations \cite{Duan_2024_Active_SJoSCa}.


\section*{Acknowledgement}

JD was supported by an Alexander von Humboldt Foundation Research fellowship CHN-1234352-HFST-P. CK and WB acknowledge funding by the Deutsche Forschungsgemeinschaft (DFG, German Research Foundation) within \textit{SPP 2410 Hyperbolic Balance Laws in Fluid Mechanics: Complexity, Scales, Randomness (CoScaRa)}, project number 525941602.

\newcommand{\etalchar}[1]{$^{#1}$}


\appendix

\section{1D power law reconstruction for point value update}\label{sec:1d_pwl_limiters}
When the numerical solutions contain discontinuities, the computation of the derivatives \cref{eq:parabolic_av} or \cref{eq:parabolic_pnt} based on the parabolic reconstructions may cause oscillations. Thus, it is reasonable to seek finite difference approximations based on differentiating a modified reconstruction with improved monotonicity properties.
This section only considers the scalar case and can be extended to systems of equations in a component-wise fashion.

The power law reconstruction proposed in \cite{Barsukow_2021_active_JoSC} can be used to replace the original parabolic reconstruction to achieve monotonicity on some occasions.
It is shown in Theorem 5 in \cite{Barsukow_2021_active_JoSC} that the extremum is not avoidable in the cell $I_i=[x_{\xl}, x_{\xr}]$ for continuous reconstructions if the cell average lies outside the range of the point values $(\bar{u}_i - u_{\xl})(u_{\xr} - \bar{u}_i) < 0$.
The parabola is monotone, and thus no action is required when $(2u_{\xl} + u_{\xr})/3 < \bar{u}_i < (u_{\xl} + 2u_{\xr})/3$
or $(2u_{\xl} + u_{\xr})/3 > \bar{u}_i > (u_{\xl} + 2u_{\xr})/3$.
Upon defining $r = \dfrac{u_{i+1/2} - \bar{u}_i}{\bar{u}_i - u_{i-1/2 }}$, one can equivalently express that the parabola is monotone when $1/2 < r < 2$.
In both these cases, the parabolic reconstruction is used,
and the derivatives are obtained by \cref{eq:parabolic_av} or \cref{eq:parabolic_pnt}.
Otherwise, the following power law reconstruction is used.

\begin{proposition}[Barsukow \cite{Barsukow_2021_active_JoSC}]\rm
	The power law reconstruction
	\begin{equation}\label{eq:pwl_function}
		\begin{cases}
			u_{\texttt{pwl},1}(x) = u_{\xl} + (u_{\xr} - u_{\xl})\left(\dfrac{x - x_i}{\Delta x_i} + \dfrac{1}{2}\right)^r, &\text{if}~~ r > 2\\
			u_{\texttt{pwl},2}(x) = u_{\xr} - (u_{\xr} - u_{\xl})\left(\dfrac{1}{2} - \dfrac{x - x_i}{\Delta x_i}\right)^{1/r}, &\text{if}~~ 0 < r < 1/2\\
		\end{cases}
	\end{equation}
	is monotone and satisfies
	\begin{equation*}
		u_{\texttt{pwl}, l}(x_{\xl}) = u_{\xl},~
		u_{\texttt{pwl}, l}(x_{\xr}) = u_{\xr},~
		\dfrac{1}{\Delta x_i}\int_{I_i} u_{\texttt{pwl}, l}(x) ~\dd x = \bar{u}_{i},~ l=1,2.
	\end{equation*}
\end{proposition}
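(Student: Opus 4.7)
The plan is to verify the three stated properties, namely the interpolation conditions at the endpoints, the cell-average condition, and monotonicity, directly from the explicit formulae in \cref{eq:pwl_function}. Since these are algebraic identities and elementary properties of power functions, the proof is essentially a direct check, and the main work is organizing the change of variables so that the role of $r = (u_{\xr} - \bar{u}_i)/(\bar{u}_i - u_{\xl})$ becomes transparent.

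First, I would handle $u_{\texttt{pwl},1}$ (the case $r > 2$). I would introduce the affine change of variable $t = (x - x_i)/\Delta x_i + 1/2$, which maps $I_i$ bijectively onto $[0,1]$, with $t=0$ at $x_{\xl}$ and $t=1$ at $x_{\xr}$. In these variables, $u_{\texttt{pwl},1}$ becomes $u_{\xl} + (u_{\xr}-u_{\xl}) t^r$. The endpoint conditions follow immediately from $0^r=0$ and $1^r=1$ (using $r>0$). Monotonicity is then clear: $t \mapsto t^r$ is strictly increasing on $[0,1]$ for $r>0$, so $u_{\texttt{pwl},1}$ is monotone with the same orientation as the sign of $u_{\xr}-u_{\xl}$. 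For the cell-average condition, I would compute
\begin{equation*}
\frac{1}{\Delta x_i}\int_{I_i} u_{\texttt{pwl},1}(x)\,\dd x = \int_0^1\bigl[u_{\xl} + (u_{\xr}-u_{\xl})t^r\bigr]\dd t = u_{\xl} + \frac{u_{\xr}-u_{\xl}}{r+1},
\end{equation*}
and then use the key identity $r+1 = (u_{\xr}-u_{\xl})/(\bar{u}_i - u_{\xl})$, which follows directly from the definition of $r$, to conclude that the right-hand side equals $\bar{u}_i$.

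Next, I would treat $u_{\texttt{pwl},2}$ (the case $0<r<1/2$) by the symmetric substitution $s = 1/2 - (x-x_i)/\Delta x_i$, which also maps $I_i$ onto $[0,1]$ but with $s=1$ at $x_{\xl}$ and $s=0$ at $x_{\xr}$. The reconstruction becomes $u_{\xr} - (u_{\xr}-u_{\xl}) s^{1/r}$; endpoint values and monotonicity follow as before (using that $1/r > 0$). For the average, integration yields $u_{\xr} - (u_{\xr}-u_{\xl})\cdot r/(r+1)$, and again the identity $r/(r+1) = (u_{\xr}-\bar{u}_i)/(u_{\xr}-u_{\xl})$, obtained from the definition of $r$, gives the desired value $\bar{u}_i$.

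I do not anticipate any serious obstacle, since all three assertions reduce to elementary facts about $t \mapsto t^\alpha$ with $\alpha>0$ on $[0,1]$ and to a single algebraic manipulation of the ratio $r$. The only subtle point worth flagging is that the two branches are chosen precisely so that the relevant exponent ($r$ or $1/r$) is positive and the resulting average exactly recovers $\bar{u}_i$; the case distinction $r>2$ versus $0<r<1/2$ in \cref{eq:pwl_function} is what guarantees that the parabolic reconstruction (which is used in the complementary range $1/2<r<2$) is replaced only when necessary to restore monotonicity, but this consistency with the boundaries of the non-monotone regime is not part of the proposition itself and does not require proof here.
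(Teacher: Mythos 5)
Your verification is correct and complete: the substitutions $t$ and $s$, the endpoint checks, the integrals $\int_0^1 t^r\,\dd t = 1/(r+1)$ and $\int_0^1 s^{1/r}\,\dd s = r/(r+1)$, and the algebraic identities $r+1=(u_{\xr}-u_{\xl})/(\bar u_i - u_{\xl})$ and $r/(r+1)=(u_{\xr}-\bar u_i)/(u_{\xr}-u_{\xl})$ all check out, and positivity of the exponent (guaranteed by $r>2$ or $0<r<1/2$) is exactly what monotonicity needs. The paper itself offers no proof, delegating to the cited reference, where the argument is the same direct computation (the exponent is chosen precisely so that the average condition forces it to equal $r$), so your approach matches the intended one.
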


\begin{figure}[hptb!]
	\centering
	\begin{subfigure}[b]{0.35\textwidth}
		\centering
		\includegraphics[width=1.0\linewidth]{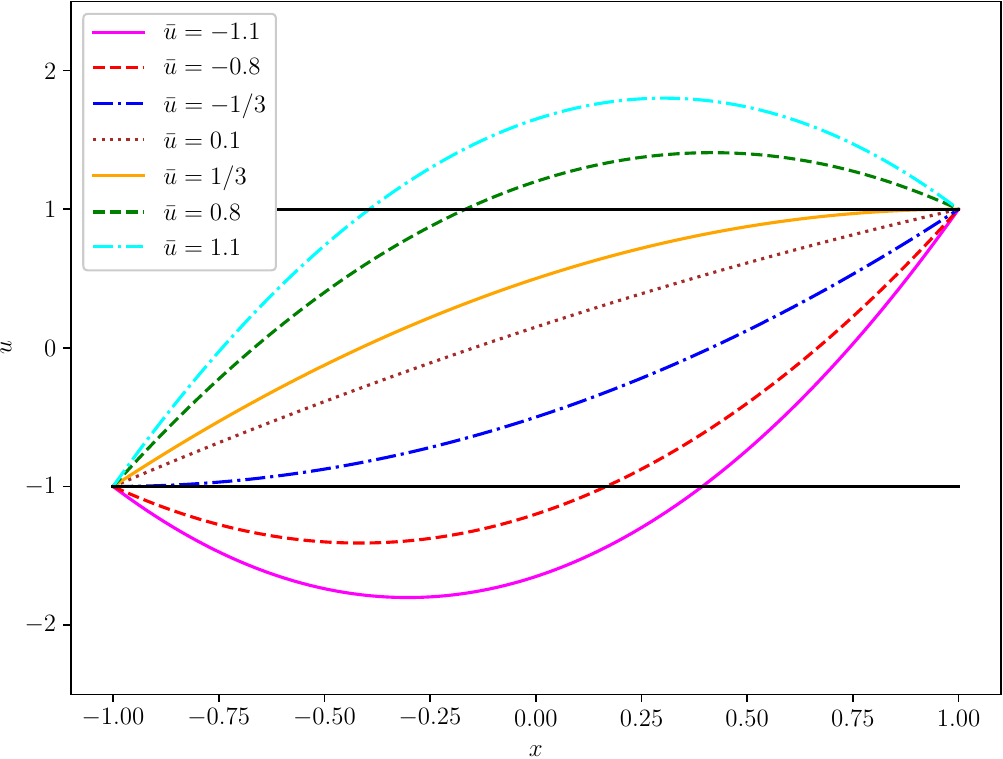}
	\end{subfigure}
	~
	\begin{subfigure}[b]{0.35\textwidth}
		\centering
		\includegraphics[width=1.0\linewidth]{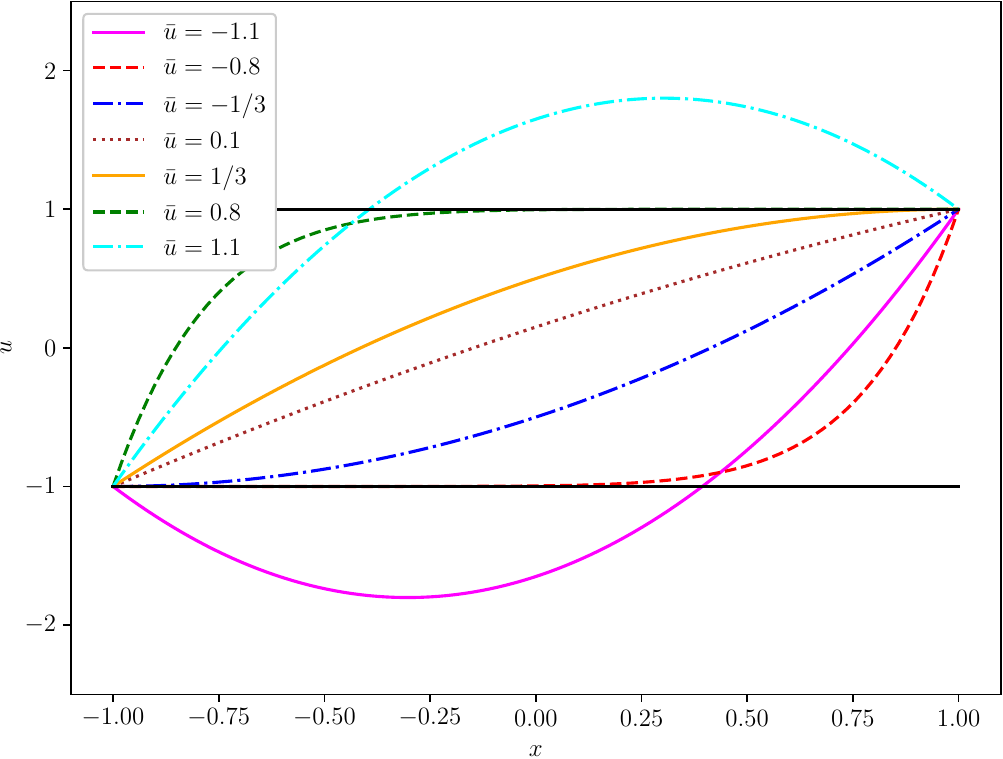}
	\end{subfigure}
	\caption{The parabolic \cref{eq:parabolic_reconstruction} and power law reconstruction \cref{eq:pwl_function} obtained with different cell averages $\{-1.1, -0.8, -1/3, ~0.1, ~1/3,~ 0.8, ~1.1\}$, and fixed point values as $-1$ and $1$ at the left and right interfaces.}
	\label{fig:1d_limited_reconstructions}
\end{figure}

A comparison between the parabolic reconstruction \cref{eq:parabolic_reconstruction} and power law reconstruction \cref{eq:pwl_function} is given in \cref{fig:1d_limited_reconstructions} with point values fixed as $-1$ and $1$ at the interfaces, and different cell averages $\{-1.1, -0.8, -1/3, 0.1, 1/3, 0.8, 1.1\}$.
One can observe monotone profiles for the power law reconstruction when the cell average lies between the two point values.
Based on \cref{eq:pwl_function}, the derivatives can be computed directly
\begin{equation*}
	\begin{cases}
		u'_{\texttt{pwl},1}(x) = \dfrac{u_{\xr} - u_{\xl}}{\Delta x_i}r\left(\dfrac{x - x_i}{\Delta x_i} + \dfrac{1}{2}\right)^{r - 1},   &\text{if}~~ r > 2, \\
		u'_{\texttt{pwl},2}(x) = \dfrac{u_{\xr} - u_{\xl}}{\Delta x_i}\dfrac{1}{r}\left(\dfrac{1}{2} - \dfrac{x - x_i}{\Delta x_i}\right)^{1/r - 1},  &\text{if}~~ 0 < r < 1/2. \\
	\end{cases}
\end{equation*}
At the left interface, the derivative is
\begin{equation}\label{eq:1d_pwl_left}
	\begin{cases}
		u'_{\texttt{pwl},1}(x_{\xl}^+) = 0,  &\text{if}~~ r > 2, \\
		u'_{\texttt{pwl},2}(x_{\xl}^+) = \dfrac{u_{\xr} - u_{\xl}}{\Delta x_i}\dfrac{1}{r},  &\text{if}~~ 0 < r < 1/2, \\
	\end{cases}
\end{equation}
and at the right interface, the derivative is
\begin{equation}\label{eq:1d_pwl_right}
	\begin{cases}
		u'_{\texttt{pwl},1}(x_{\xr}^-) = \dfrac{u_{\xr} - u_{\xl}}{\Delta x_i}r,  &\text{if}~~ r > 2, \\
		u'_{\texttt{pwl},2}(x_{\xr}^-) = 0,  &\text{if}~~ 0 < r < 1/2. \\
	\end{cases}
\end{equation}
To avoid computational issues, when $r\not\in[1/50, 50]$, the parabolic reconstruction is adopted directly.

For the FVS, as the cell average of the flux can be obtained through Simpson's rule,
$\bar{f}_i = (f_{\xl} + 4f_i + f_{\xr}) / 6$,
the flux derivatives can be computed by \cref{eq:1d_pwl_left}-\cref{eq:1d_pwl_right}.

\begin{remark}\rm
	In \cite{Abgrall_2023_Extensions_EMMaNA}, it is mentioned that if the signs of the derivatives of the parabolic reconstruction and the first-order reconstruction are the same, then the parabolic reconstruction is adopted.
	This strategy is not employed in this paper as the numerical results may be worse.
\end{remark}

\section{Additional numerical results}

\begin{example}[Shu-Osher shock-entropy wave interaction]\label{ex:1d_shock_entropy}
	This test is used to check a scheme’s ability to resolve a complex solution with both strong and weak shocks and highly oscillatory but smooth waves.
	The initial data are
	\begin{equation*}
		(\rho, v, p) = \begin{cases}
			(3.857143, ~2.629369, ~10.33333), &\text{if}~ x<-4,\\
			(1 + 0.2\sin(5x), ~0, ~1), &\text{otherwise},\\
		\end{cases}
	\end{equation*}
	on the domain $[-5,5]$ with $\gamma=1.4$.
	This test is solved until $T=1.8$.

The reference solution is obtained with the fifth-order WENO finite difference scheme on a mesh of $2000$ grid points.
The solutions computed with the CFL number $0.3$ based on the JS and different FVS without limiting on a mesh of $400$ cells are displayed in \cref{fig:1d_shock_entropy}.
There are very minor differences between the JS and FVS in the enlarged view.
We also check the maximal CFL numbers for each kind of point value update such that the simulation is stable,
which are around $0.43$, $0.42$, $0.43$, $0.31$ for the JS, LLF, SW, and VH FVS.
If the power law reconstruction is activated for computing the derivatives in the point value update,
the corresponding CFL numbers should be reduced to achieve stability,
which are around $0.13$, $0.15$, $0.16$, $0.14$.

\begin{figure}[hptb!]
	\centering
	\begin{subfigure}[b]{0.35\textwidth}
		\centering
		\includegraphics[width=1.0\linewidth]{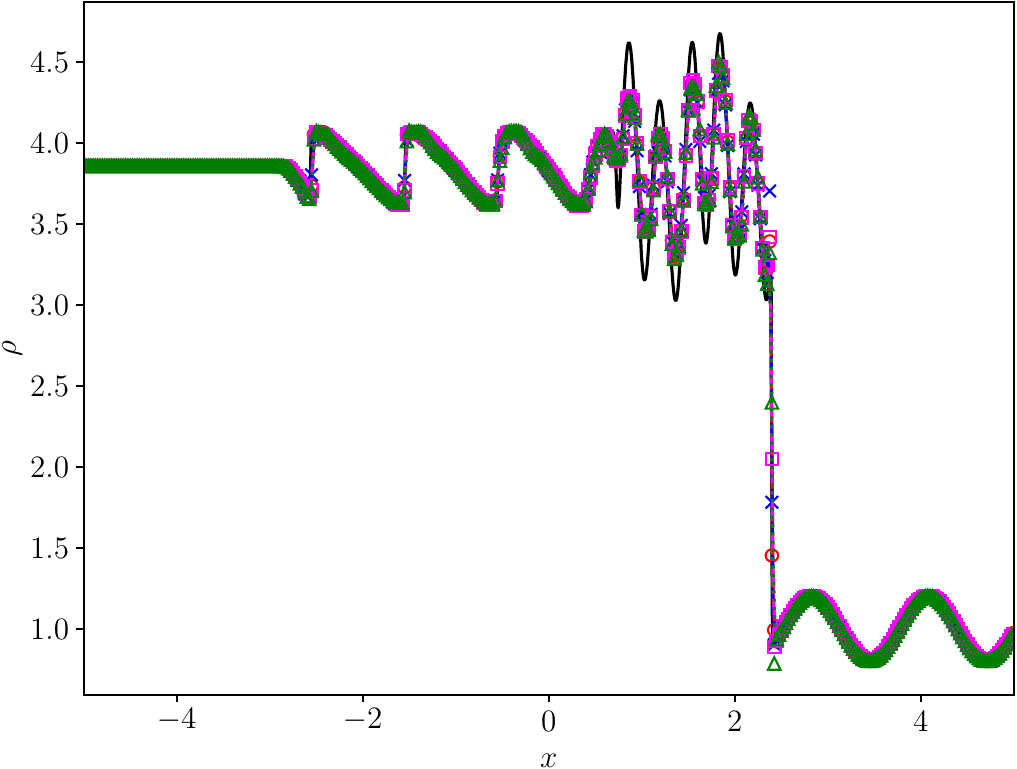}
	\end{subfigure}
	\begin{subfigure}[b]{0.35\textwidth}
		\centering
		\includegraphics[width=1.0\linewidth]{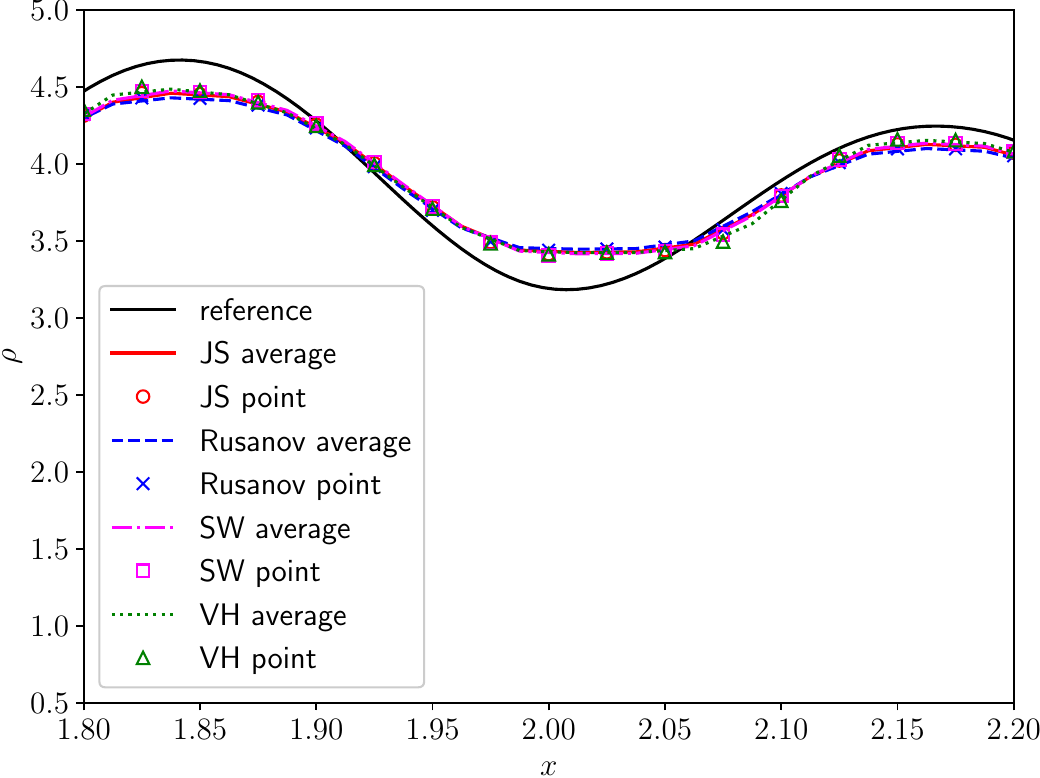}
	\end{subfigure}
	\caption{\Cref{ex:1d_shock_entropy}, shock-entropy wave interaction.
		The numerical solutions are obtained by using the JS and different FVS without limiting on a uniform mesh of $400$ cells.
		The enlarged view in $x\in[1.8, 2.2]$ is shown on the right.}
	\label{fig:1d_shock_entropy}
\end{figure}
\end{example}

\begin{example}[Double rarefaction problem]\label{ex:1d_double_rarefaction_supp}
\cref{fig:1d_double_rarefaction_v_p} shows the velocity and pressure computed with $400$ cells and the BP limitings for the cell average and point value updates, while without the power law reconstruction.

\begin{figure}[hptb!]
	\centering
	\begin{subfigure}[b]{0.24\textwidth}
		\centering
		\includegraphics[width=\linewidth]{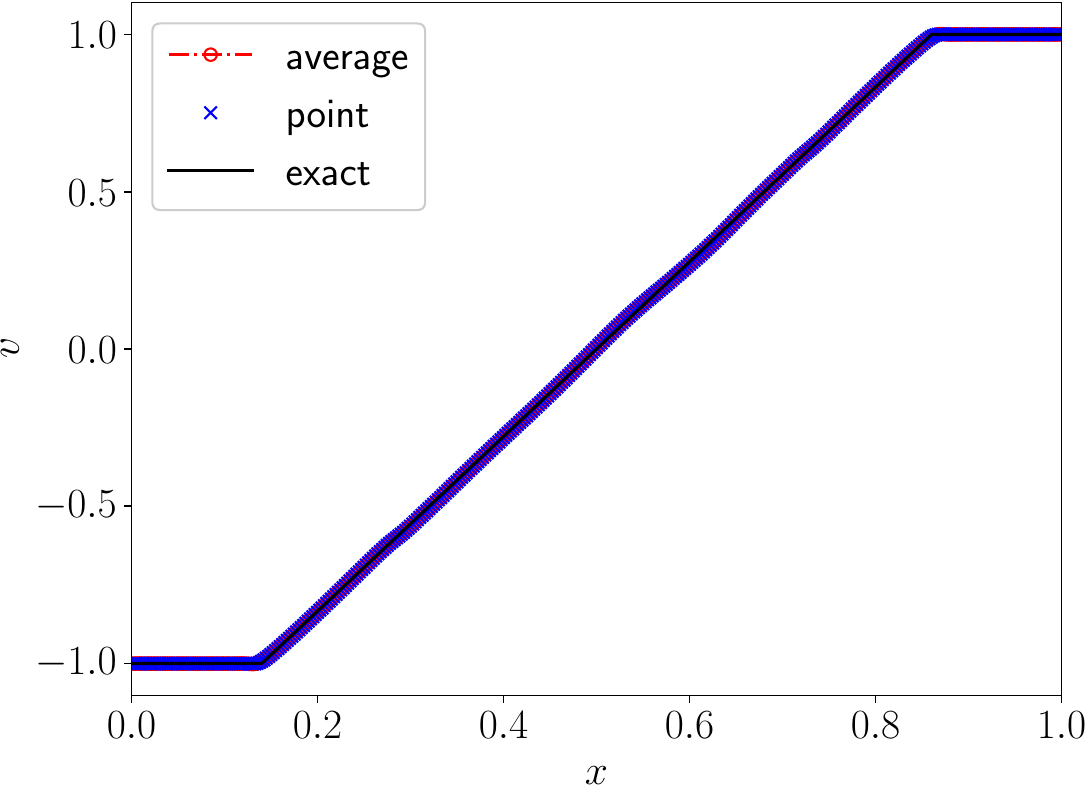}
	\end{subfigure}
	\begin{subfigure}[b]{0.24\textwidth}
		\centering
		\includegraphics[width=\linewidth]{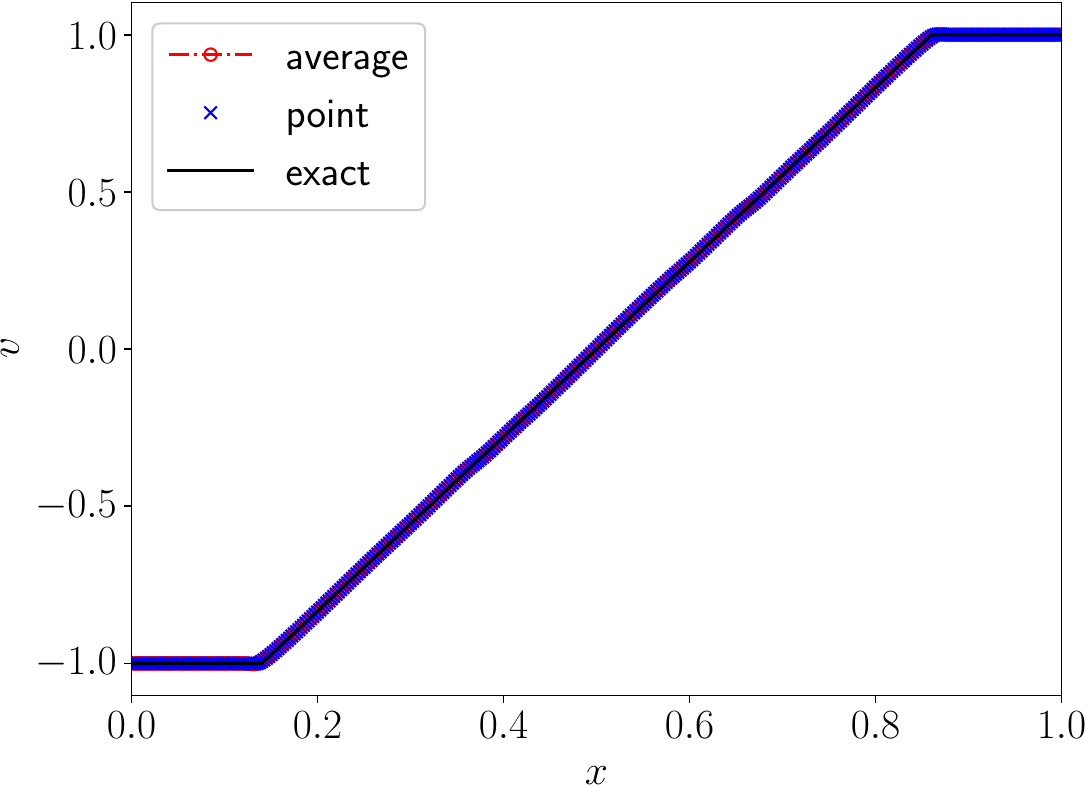}
	\end{subfigure}
	\begin{subfigure}[b]{0.24\textwidth}
		\centering
		\includegraphics[width=\linewidth]{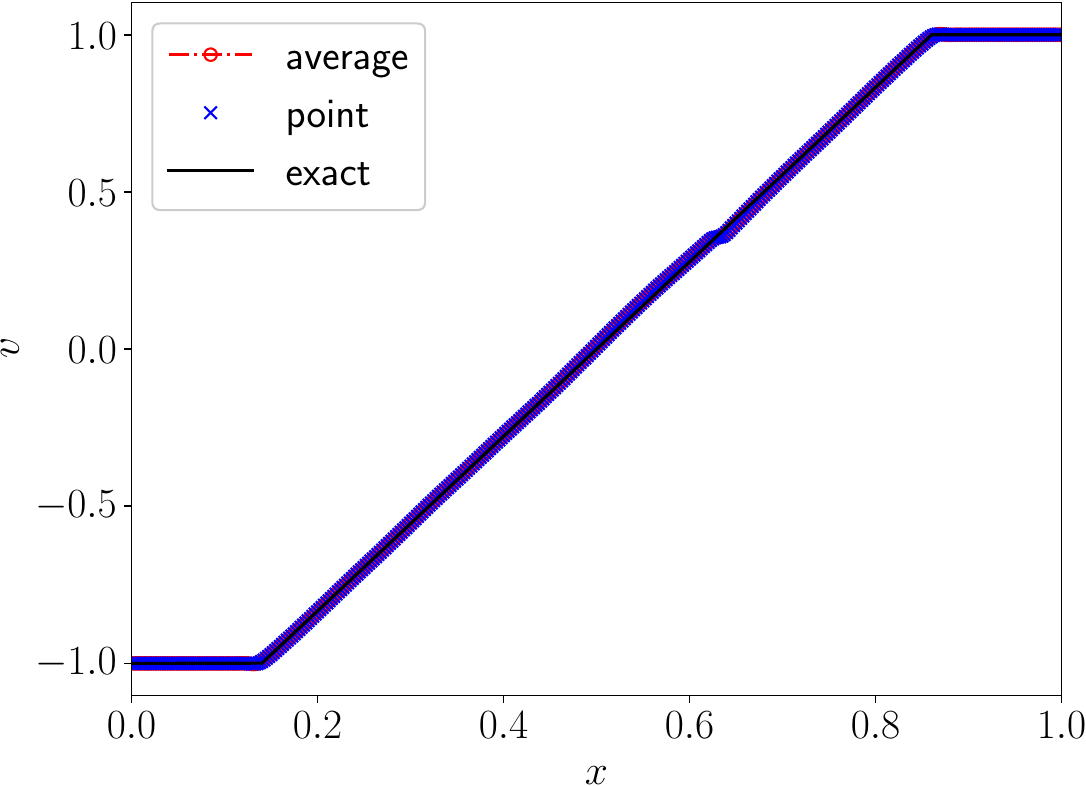}
	\end{subfigure}
	\begin{subfigure}[b]{0.24\textwidth}
		\centering
		\includegraphics[width=\linewidth]{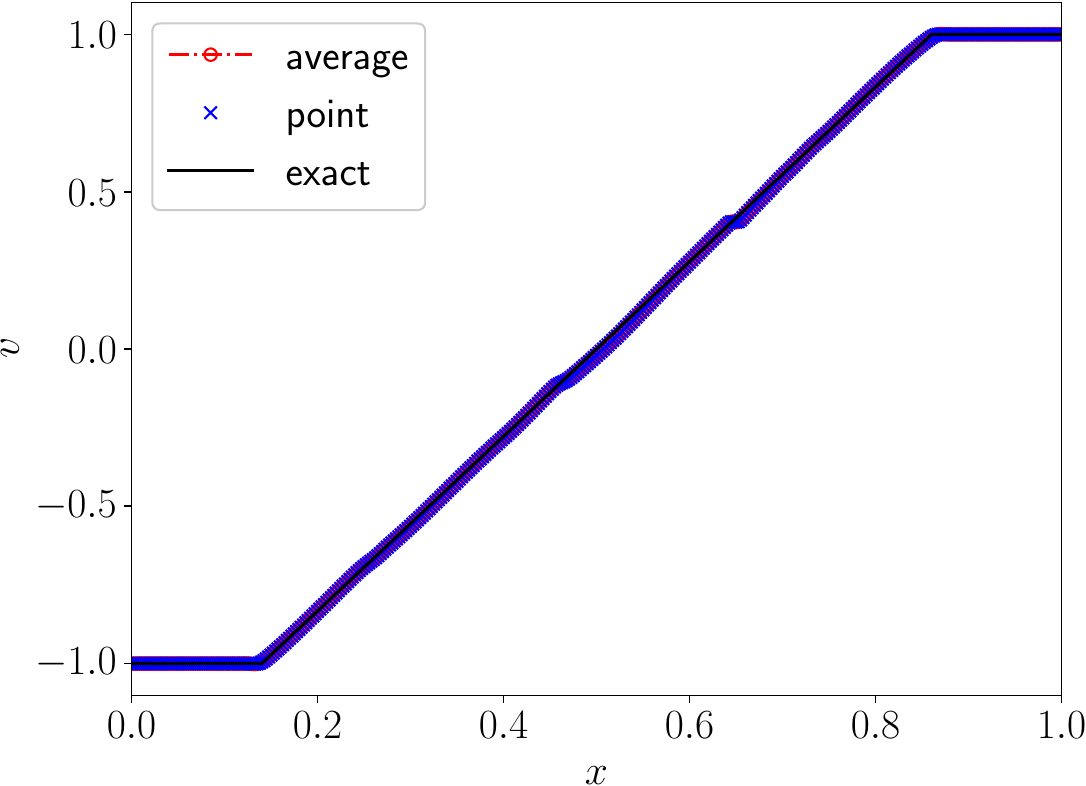}
	\end{subfigure}
	
	\begin{subfigure}[b]{0.24\textwidth}
		\centering
		\includegraphics[width=\linewidth]{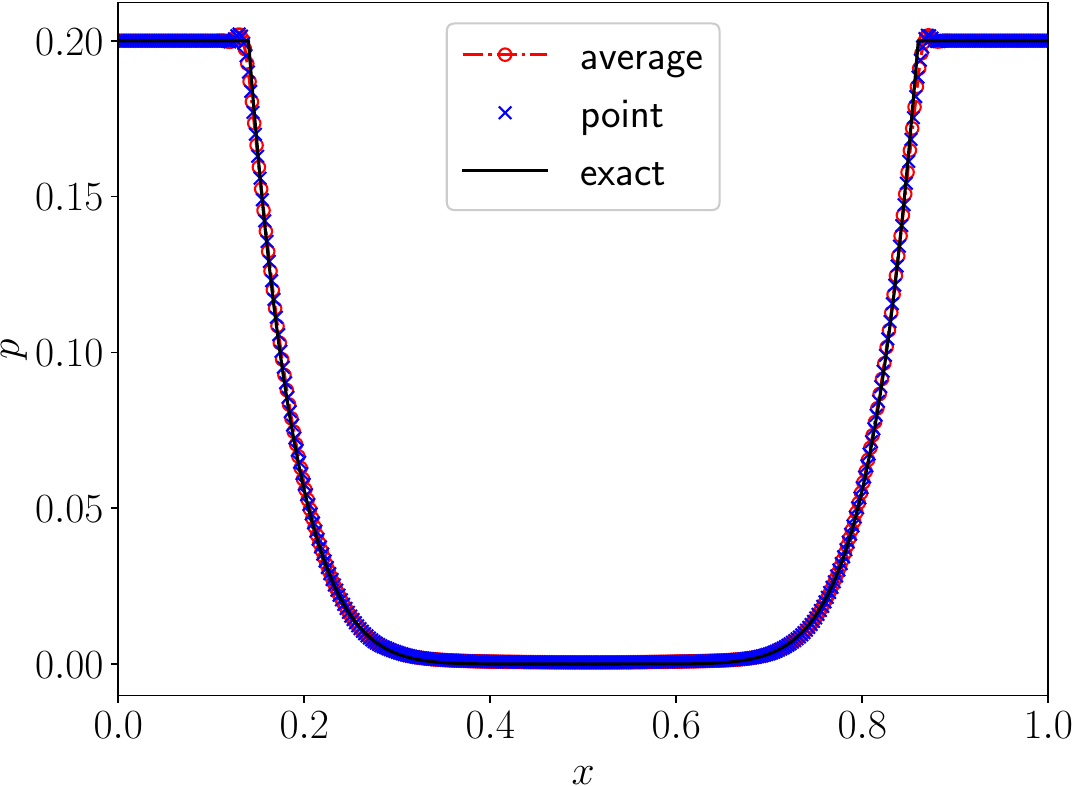}
	\end{subfigure}
	\begin{subfigure}[b]{0.24\textwidth}
		\centering
		\includegraphics[width=\linewidth]{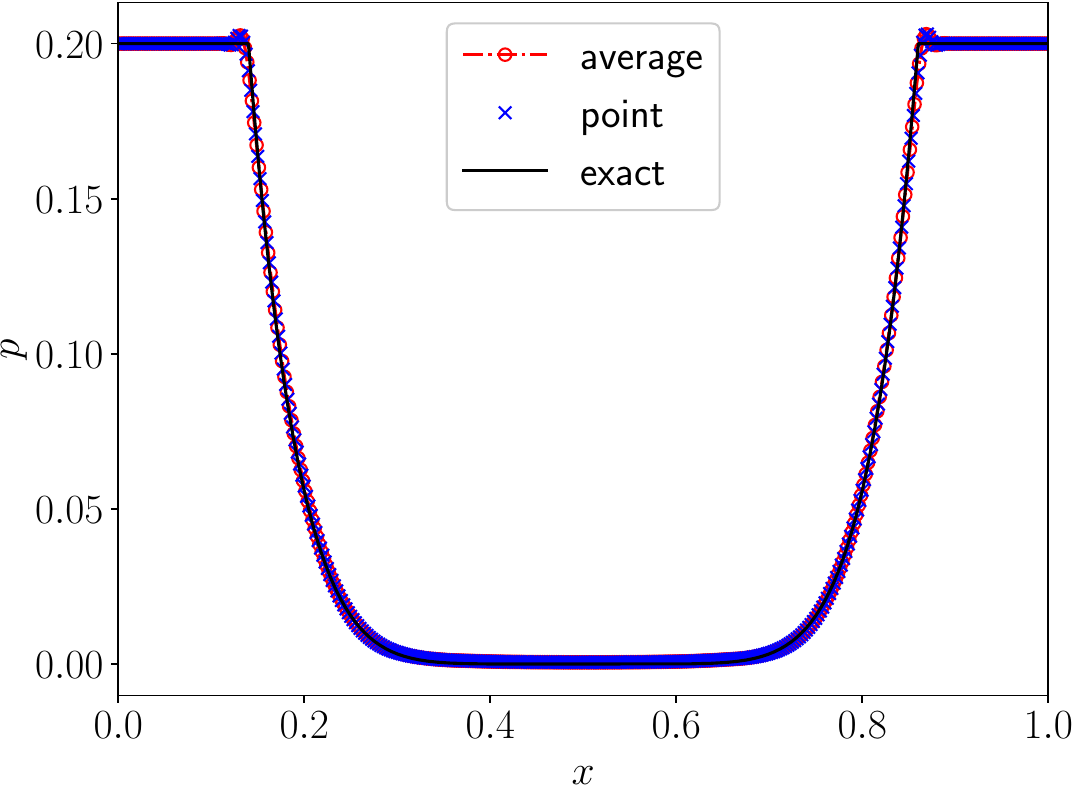}
	\end{subfigure}
	\begin{subfigure}[b]{0.24\textwidth}
		\centering
		\includegraphics[width=\linewidth]{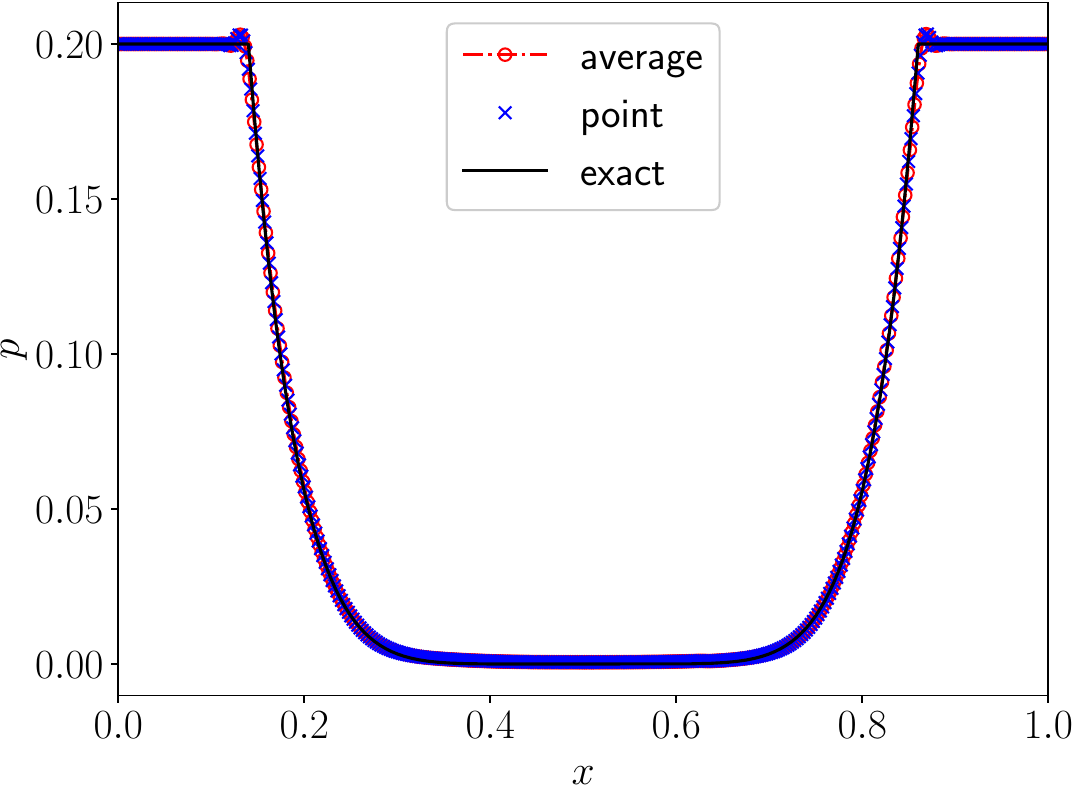}
	\end{subfigure}
	\begin{subfigure}[b]{0.24\textwidth}
		\centering
		\includegraphics[width=\linewidth]{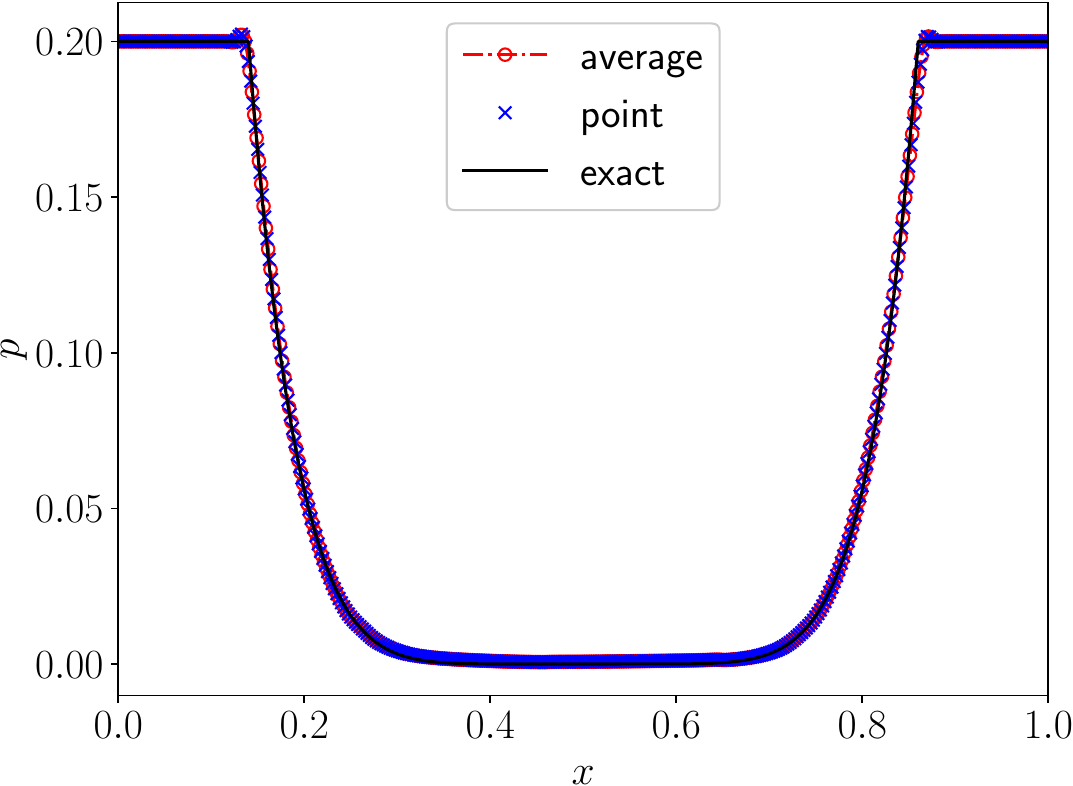}
	\end{subfigure}
	\caption{\Cref{ex:1d_double_rarefaction_supp}, double rarefaction Riemann problem.
		The velocity and pressure are computed with BP limitings for the cell average and point value updates on a uniform mesh of $400$ cells.
		The power law reconstruction is not used.
		From left to right: JS, LLF, SW, and VH FVS.}
	\label{fig:1d_double_rarefaction_v_p}
\end{figure}
\end{example}

\begin{example}[LeBlanc shock tube]\label{ex:1d_leblanc_supp}
\Cref{fig:1d_leblanc_coarse_mesh_v_p} shows the velocity and pressure computed on a uniform mesh of $400$ cells and the BP limitings for the cell average and point value updates,
and \cref{fig:1d_leblanc_fine_mesh_cfl0.4_v_p} shows the corresponding results with $6000$ cells.

\begin{figure}[hptb!]
	\centering
	\begin{subfigure}[b]{0.24\textwidth}
		\centering
		\includegraphics[width=\linewidth]{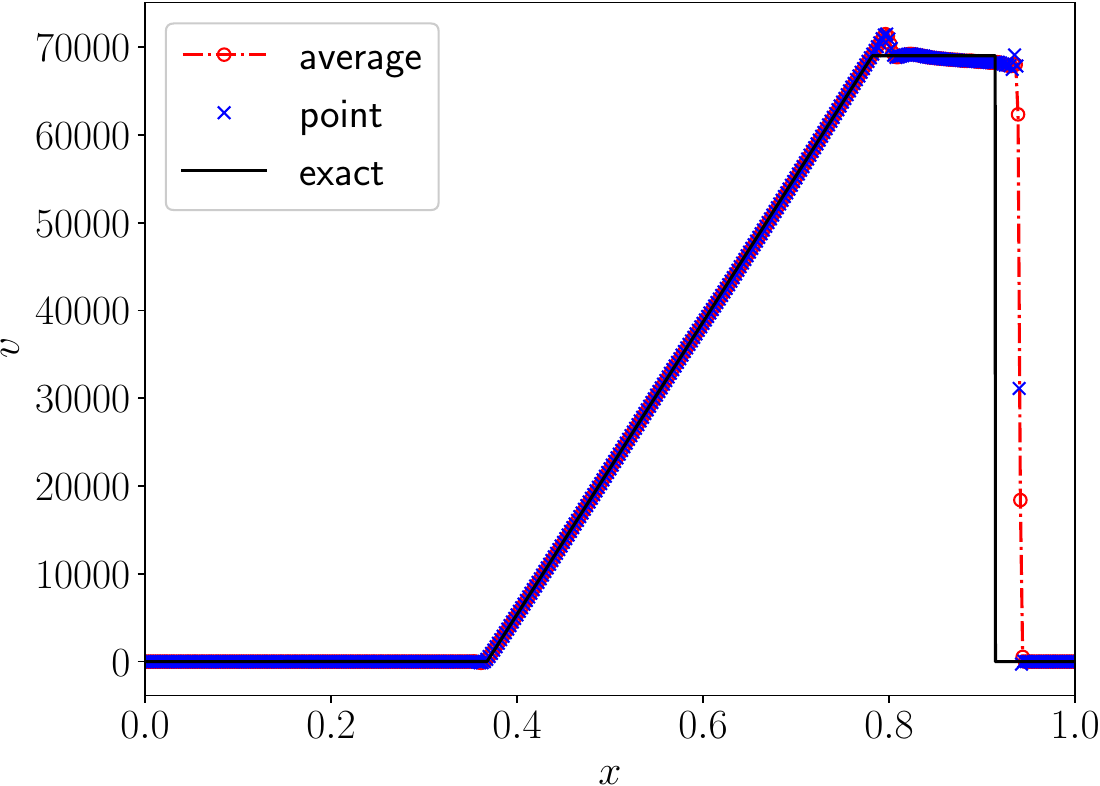}
	\end{subfigure}
	\begin{subfigure}[b]{0.24\textwidth}
		\centering
		\includegraphics[width=\linewidth]{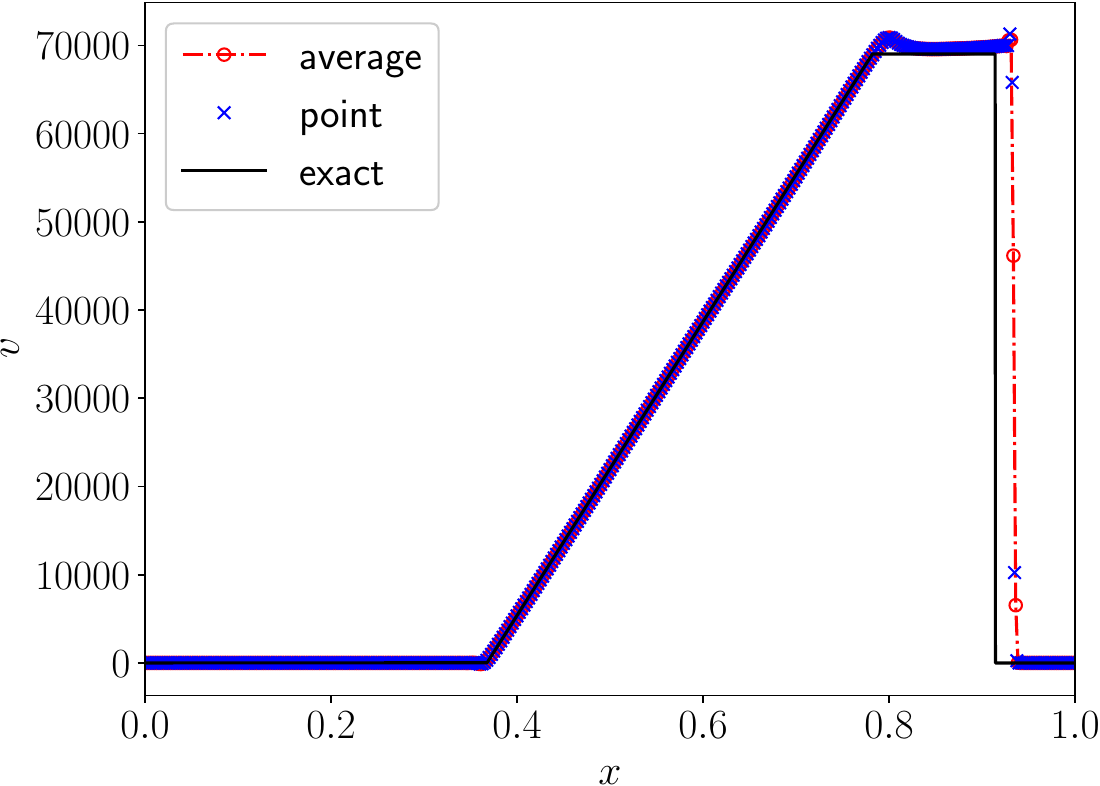}
	\end{subfigure}
	\begin{subfigure}[b]{0.24\textwidth}
		\centering
		\includegraphics[width=\linewidth]{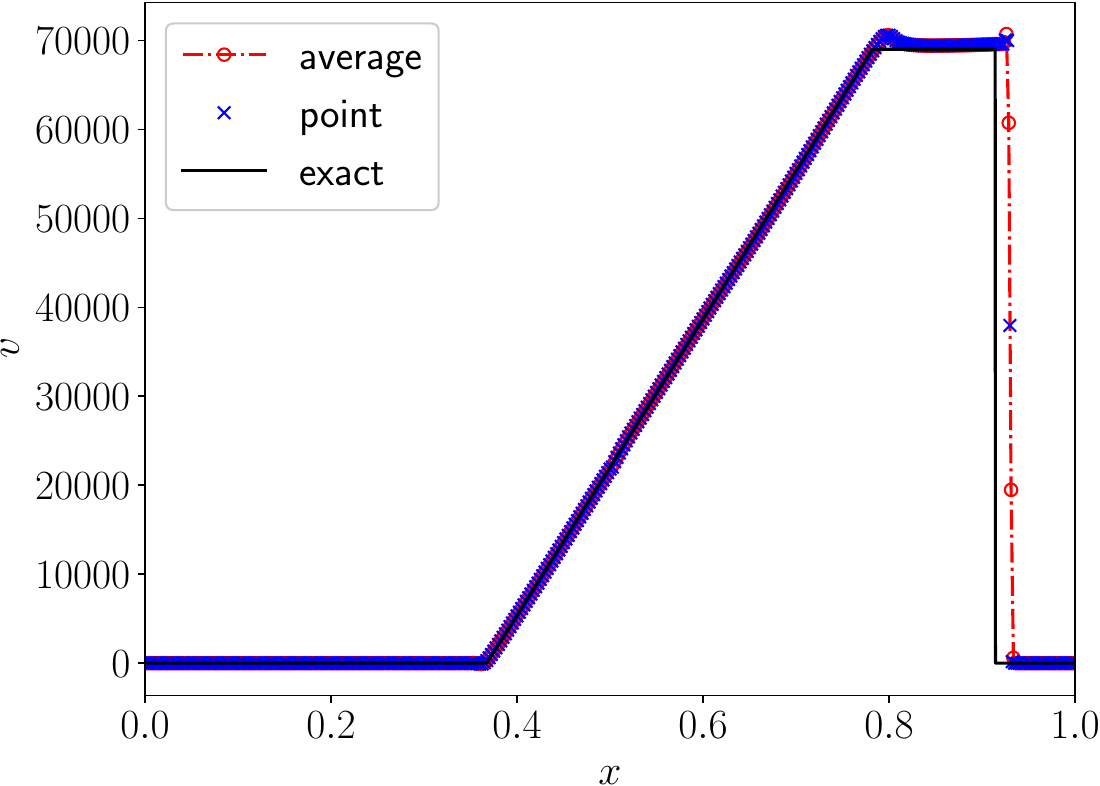}
	\end{subfigure}
	\begin{subfigure}[b]{0.24\textwidth}
		\centering
		\includegraphics[width=\linewidth]{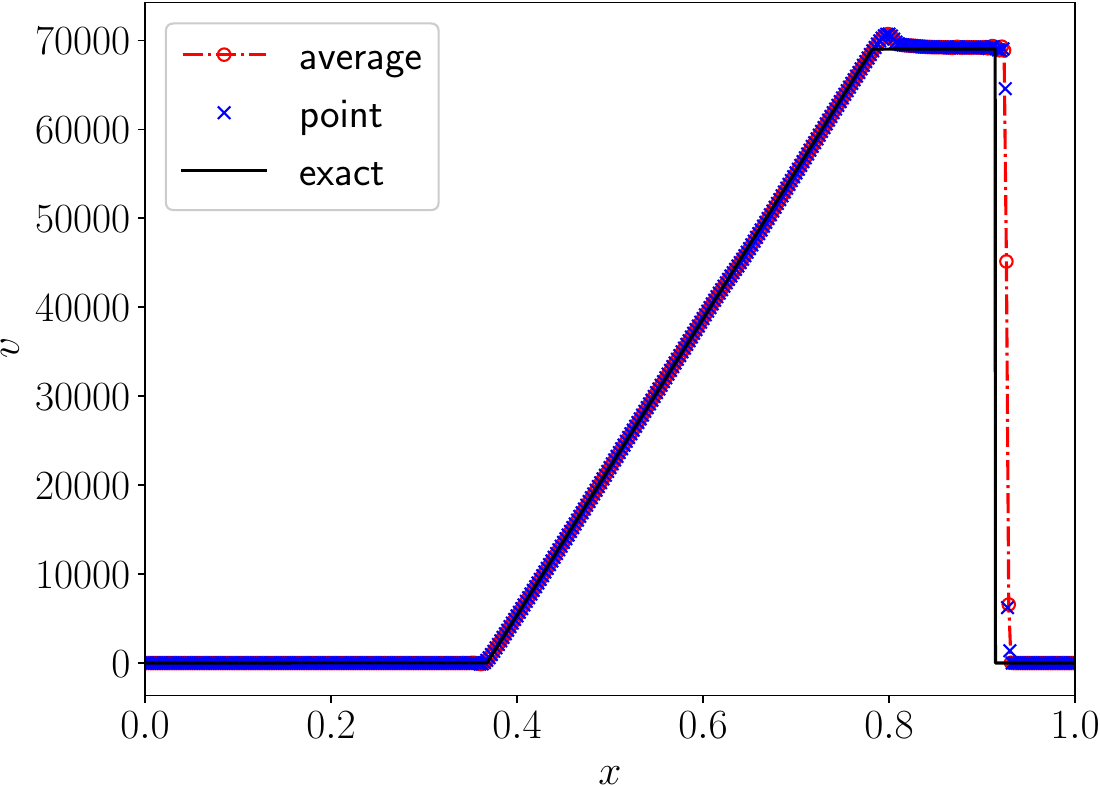}
	\end{subfigure}
	
	\begin{subfigure}[b]{0.24\textwidth}
		\centering
		\includegraphics[width=\linewidth]{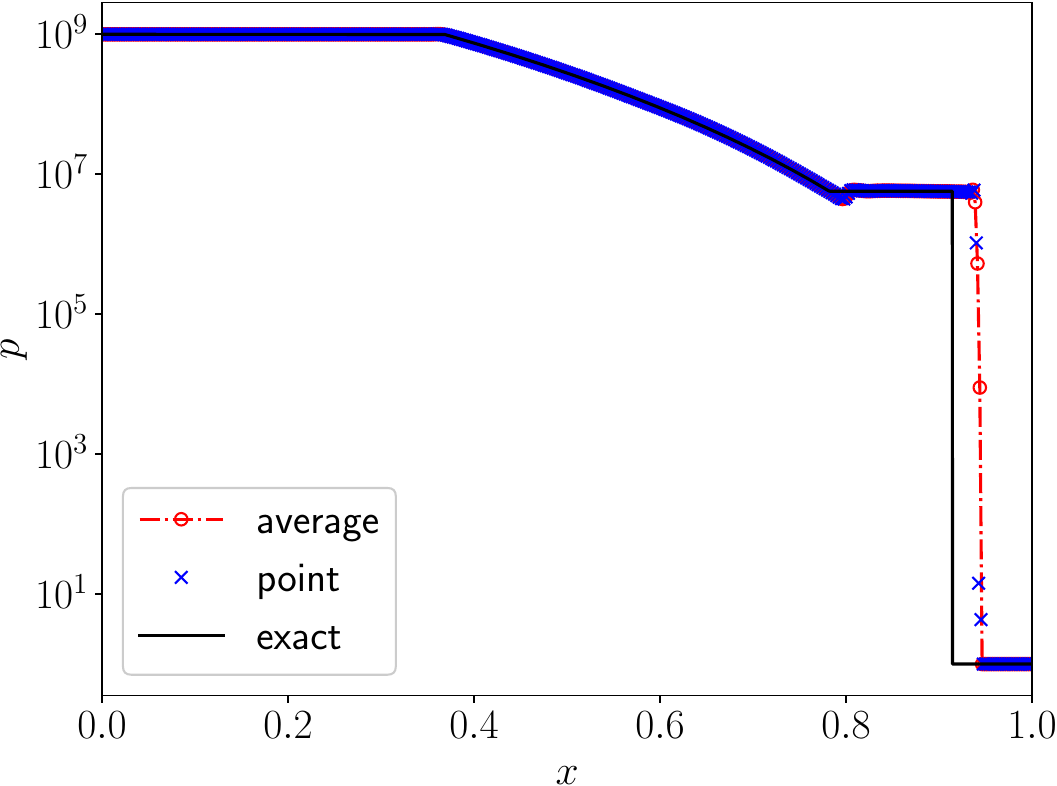}
	\end{subfigure}
	\begin{subfigure}[b]{0.24\textwidth}
		\centering
		\includegraphics[width=\linewidth]{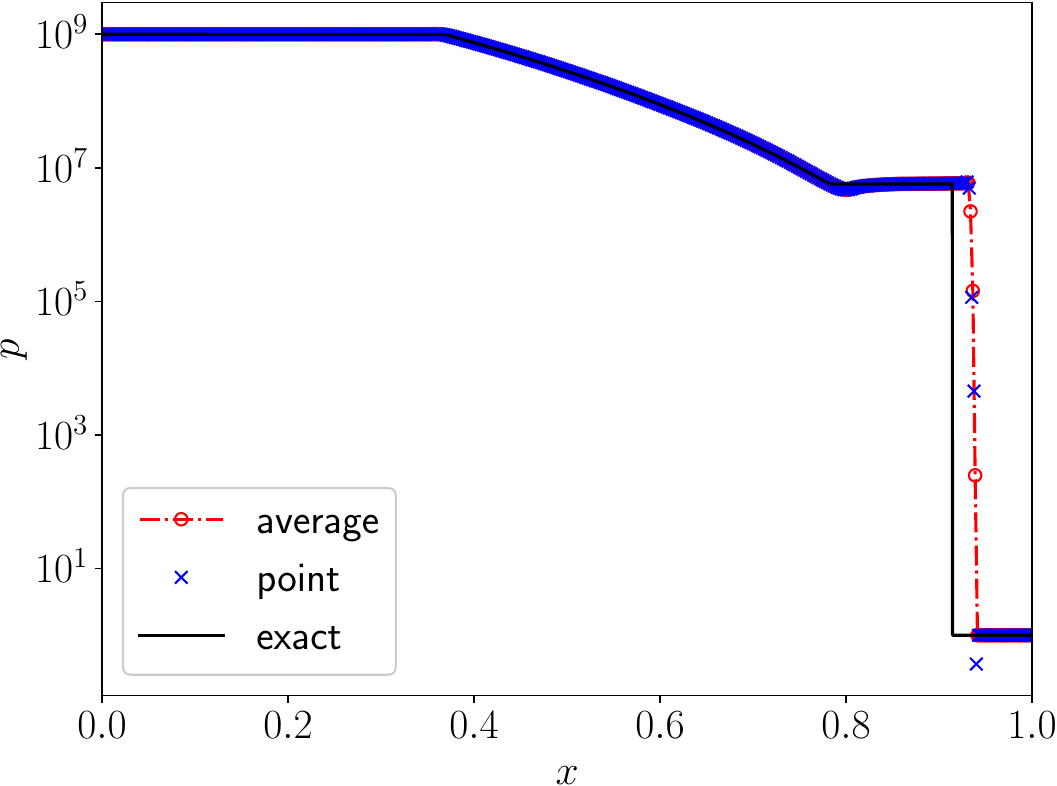}
	\end{subfigure}
	\begin{subfigure}[b]{0.24\textwidth}
		\centering
		\includegraphics[width=\linewidth]{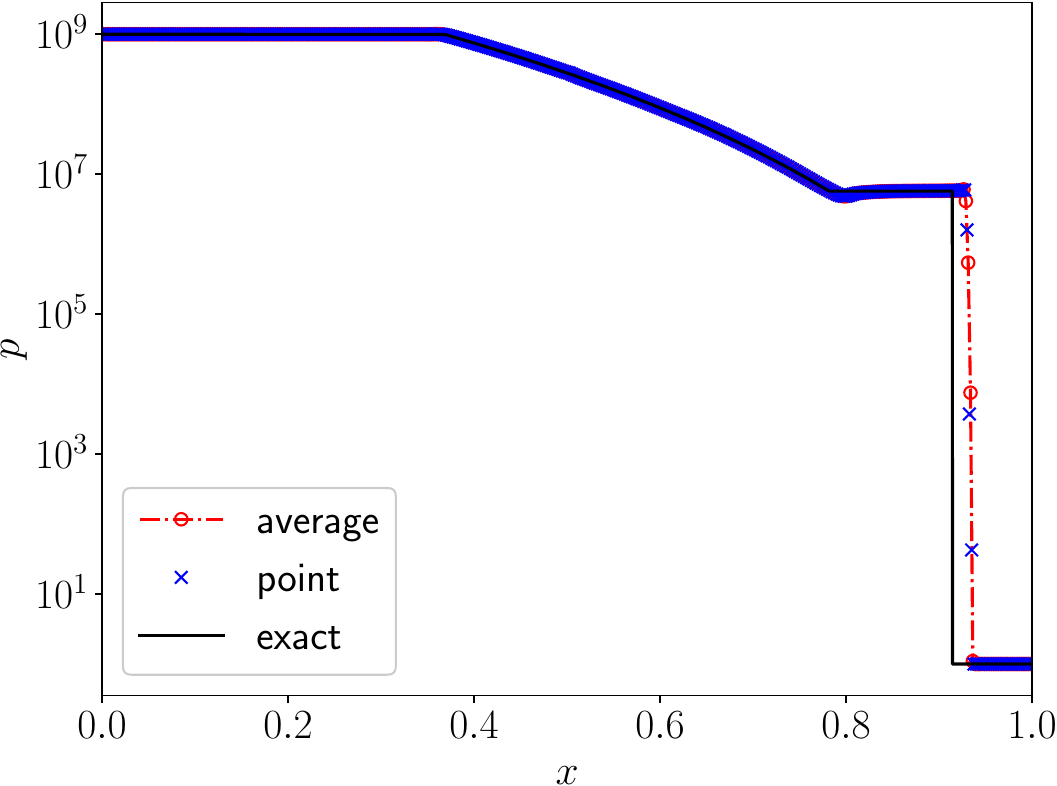}
	\end{subfigure}
	\begin{subfigure}[b]{0.24\textwidth}
		\centering
		\includegraphics[width=\linewidth]{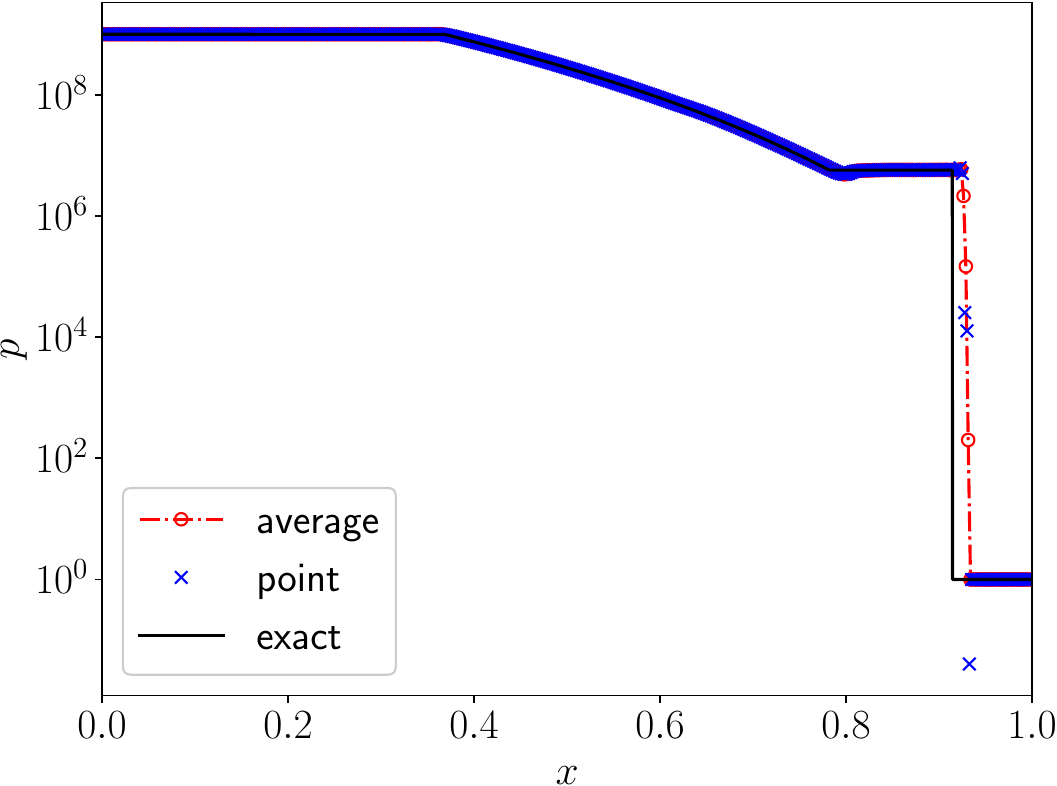}
	\end{subfigure}
	\caption{\Cref{ex:1d_leblanc_supp}, LeBlanc Riemann problem.
		The numerical solutions are computed with the BP limitings for the cell average and point value updates on a uniform mesh of $400$ cells.
		The CFL number is $0.4$ and the power law reconstruction is not used.
		From left to right: JS, LLF, SW, and VH FVS.}
	\label{fig:1d_leblanc_coarse_mesh_v_p}
\end{figure}

\begin{figure}[hptb!]
	\centering
	\begin{subfigure}[b]{0.24\textwidth}
		\centering
		\includegraphics[width=\linewidth]{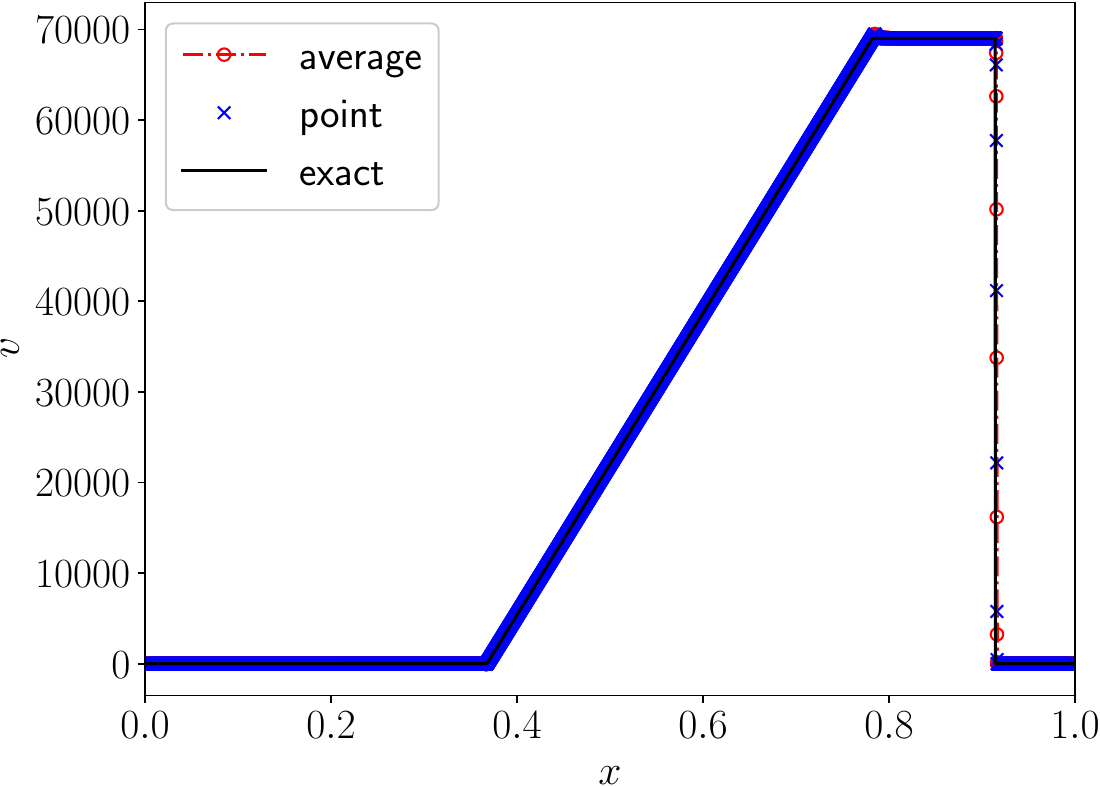}
	\end{subfigure}
	\begin{subfigure}[b]{0.24\textwidth}
		\centering
		\includegraphics[width=\linewidth]{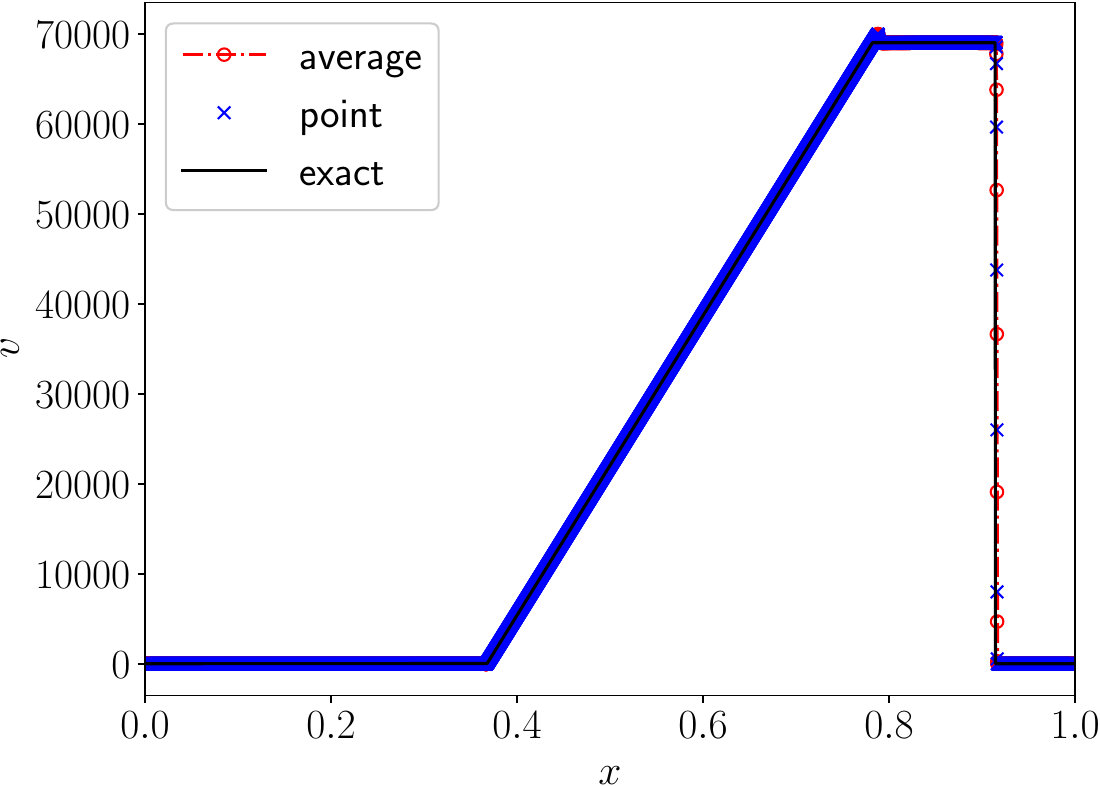}
	\end{subfigure}
	\begin{subfigure}[b]{0.24\textwidth}
		\centering
		\includegraphics[width=\linewidth]{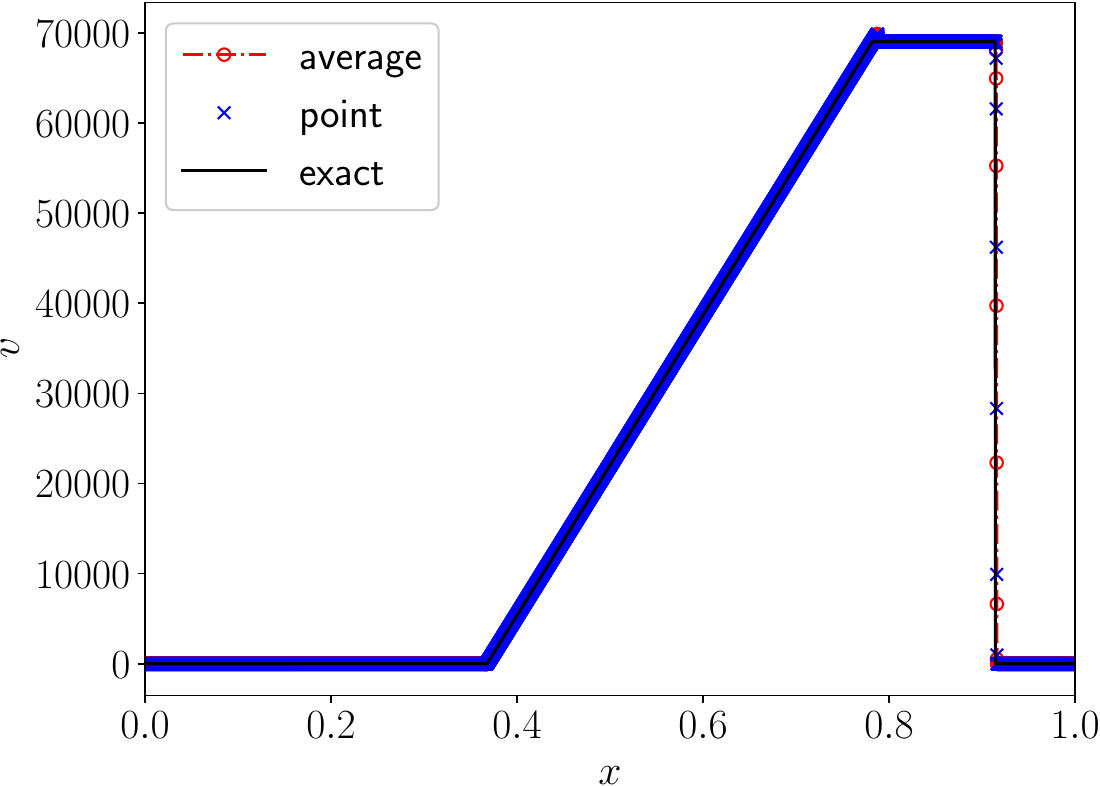}
	\end{subfigure}
	\begin{subfigure}[b]{0.24\textwidth}
		\centering
		\includegraphics[width=\linewidth]{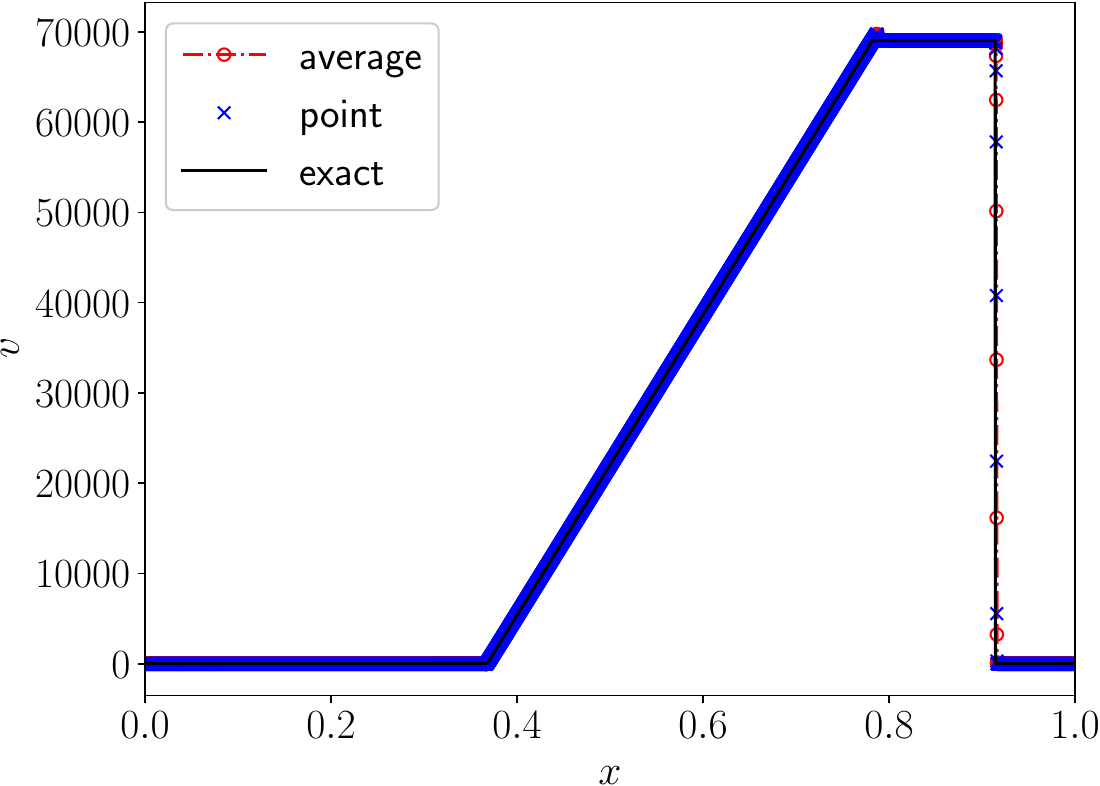}
	\end{subfigure}
	
	\begin{subfigure}[b]{0.24\textwidth}
		\centering
		\includegraphics[width=\linewidth]{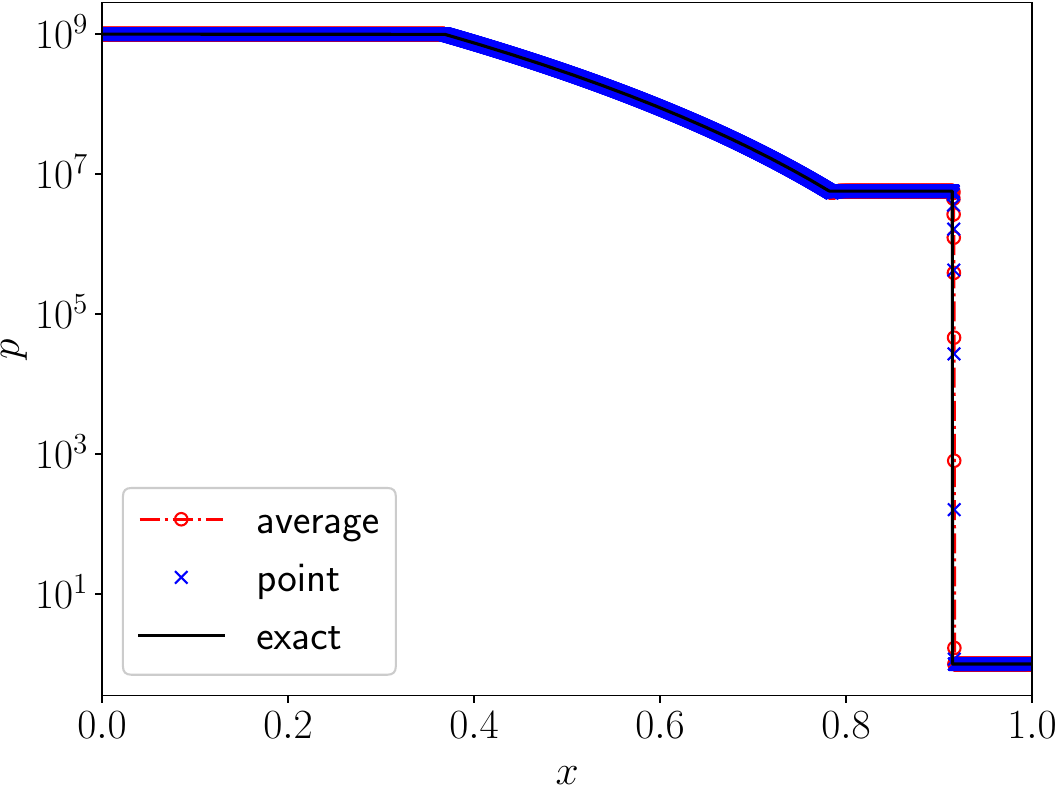}
	\end{subfigure}
	\begin{subfigure}[b]{0.24\textwidth}
		\centering
		\includegraphics[width=\linewidth]{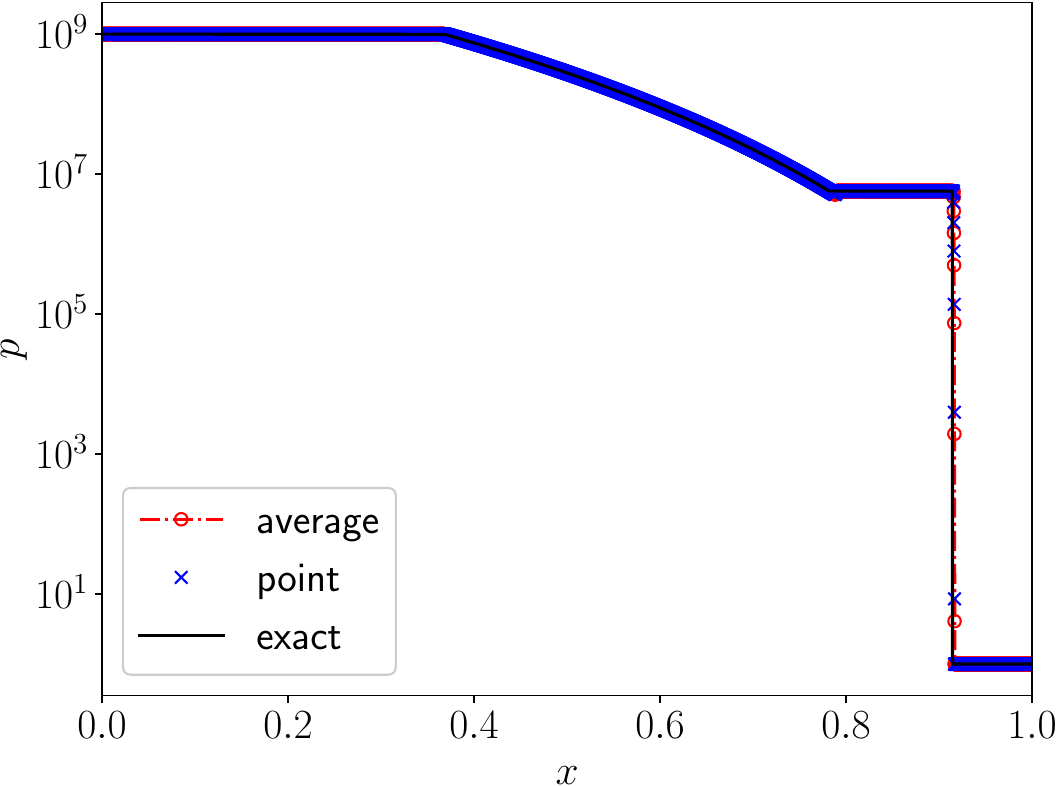}
	\end{subfigure}
	\begin{subfigure}[b]{0.24\textwidth}
		\centering
		\includegraphics[width=\linewidth]{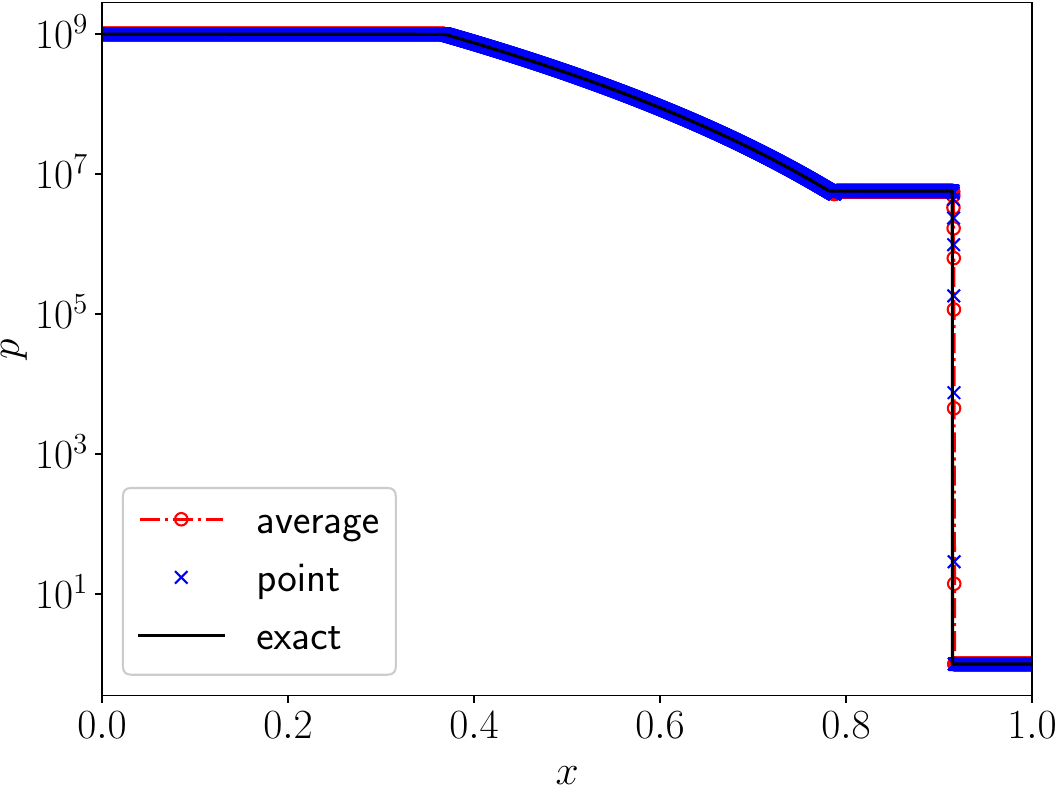}
	\end{subfigure}
	\begin{subfigure}[b]{0.24\textwidth}
		\centering
		\includegraphics[width=\linewidth]{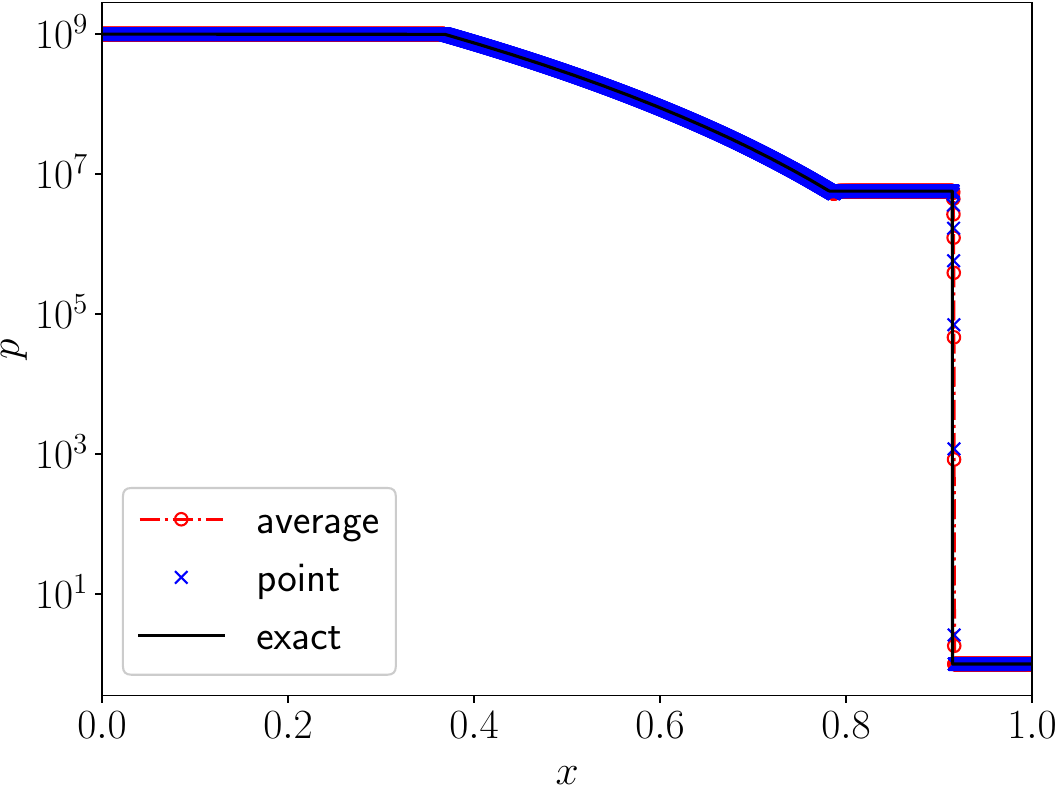}
	\end{subfigure}
	\caption{\Cref{ex:1d_leblanc_supp}, LeBlanc Riemann problem.
		The numerical solutions are computed with the BP limitings for the cell average and point value updates on a uniform mesh of $6000$ cells.
		The CFL number is $0.4$ and the power law reconstruction is not used.
		The shock sensor-based limiting with $\kappa=10$ is used.
		From left to right: JS, LLF, SW, and VH FVS.}
	\label{fig:1d_leblanc_fine_mesh_cfl0.4_v_p}
\end{figure}
\end{example}

\bibliographystyle{siamplain}
\bibliography{/Users/Junming/Research/references.bib}

\end{document}